\theoremstyle{plain}
\newtheorem{thm}{Theorem}
\newtheorem{lem}[thm]{Lemma}
\newtheorem{cor}[thm]{Corollary}
\theoremstyle{remark}
\newtheorem{ex}{Example}
\newcommand{\mb}{\mathbf}
\newcommand{\mbb}{\boldsymbol}
\newcommand{\abs}[1]{\left| #1\right|}
\newcommand{\ind}{\mathbbm{1}}
\newcommand{\pr}{\mathbb{P}}
\newcommand{\R}{\mathbb{R}}
\newcommand{\E}{\mathbb{E}}
\newcommand{\Var}{\mathrm{Var}}
\DeclareMathOperator*{\argmin}{arg\,min}
\newcommand{\sgn}{\mathrm{sgn}}
\newcommand{\tr}{\mathrm{tr}}
\newcommand{\iid}{\stackrel{\mathrm{i.i.d.}}{\sim}}
\newcommand{\indist}{\stackrel{d}{\to}}
\newcommand{\inprob}{\stackrel{p}{\to}}
\newcommand{\ci}{\mbox{${}\perp\mkern-11mu\perp{}$}}
\newcommand{\OLS}{\text{OLS}}
\newcommand{\DB}{\text{DB}}
\newcommand{\DEF}{\text{DEF}}
\newcommand{\WDEF}{\text{W-DEF}}
\newcommand{\GENDEF}{\text{GLM-DEF}}
\newcommand{\GLM}{\text{GLM}}
\newcommand{\re}{\text{re}}
\newcommand{\sq}{\text{sq}}
\begin{document}

\begin{frontmatter}
\title{Double-estimation-friendly inference for high-dimensional misspecified models}

\runtitle{Double-estimation-friendly inference}

\begin{aug}
\author[A]{\fnms{Rajen D.} \snm{Shah}\ead[label=e1]{r.shah@statslab.cam.ac.uk}}
\and
\author[B]{\fnms{Peter} \snm{B{\"u}hlmann}\ead[label=e2]{buehlmann@stat.math.ethz.ch}}
\address[A]{Statistical Laboratory, University of Cambridge, Cambridge, UK\printead{e1}.}
\address[B]{Seminar for Statistics,	ETH Zurich,	Zurich, Switzerland\printead{e2}.}
\end{aug}

\begin{abstract}
All models may be wrong---but that is not necessarily a problem for inference. Consider the standard $t$-test for  the significance of a variable $X$ for predicting response $Y$ whilst controlling for $p$ other covariates $Z$ in a random design linear model. This yields correct asymptotic type~I error control for the null hypothesis that $X$ is conditionally independent of $Y$ given $Z$ under an \emph{arbitrary} regression model of $Y$ on $(X, Z)$, provided that a linear regression model for $X$ on $Z$ holds. An analogous robustness to misspecification, which we term the ``double-estimation-friendly'' (DEF) property, also holds for Wald tests in generalised linear models, with some small modifications.

In this expository paper we explore this phenomenon, and propose methodology for high-dimensional regression settings that respects the DEF property. We advocate specifying (sparse) generalised linear regression models for both $Y$ and the covariate of interest $X$; our framework gives valid inference for the conditional independence null if either of these hold. In the special case where both specifications are linear, our proposal amounts to a small modification of the popular debiased Lasso test. We also investigate constructing confidence intervals for the regression coefficient of $X$ via inverting our tests; these have coverage guarantees even in partially linear models where the contribution of $Z$ to $Y$ can be arbitrary.
Numerical experiments demonstrate the effectiveness of the
methodology.
\end{abstract}

\begin{keyword}
\kwd{Conditional independence}
\kwd{High-dimensional inference}
\kwd{Debiased Lasso}
\kwd{Generalised linear models}
\kwd{Double robustness}
\end{keyword}

\end{frontmatter}
\section{Introduction}
In this expository article, we describe a
concept of insensitivity or robustness against model misspecification in
linear and generalised linear models.
Our starting point is the observation that inference in a misspecified
linear model for the regression parameter still leads to correct
statements about certain conditional independencies if the relationships between the covariates takes an appropriate form. Our aim is to popularise this main
idea which, up to a few exceptions, seems to have been largely overlooked
in the statistical literature and textbooks; and also to further develop the
methodology and some theory for the case of high-dimensional linear and
generalised linear models.

\medskip\noindent
\textbf{Misspecified linear models and the $t$-test.}
We now describe a simple result (see Theorem~\ref{thm:lm})
which should serve as a motivation.
Consider data $(\mb Y, \mb X, \mb Z) \in \R^n \times \R^n \times
\R^{n\times p}$ (note $\mb X$ is a vector whilst $\mb Z$ is a matrix) for which we have postulated a random design linear model,
\begin{equation} \label{eq:lmY}
	\mb Y = \mb X \theta + \mb Z \beta^Y + \mbb\varepsilon,
\end{equation}
with $\mbb\varepsilon \sim \mathcal{N}_n(\mb 0, \sigma^2 \mb I)$ and design matrix $(\mb X, \mb Z)$ having i.i.d.\ Gaussian rows. The reason for distinguishing the covariate $\mb X$ from the other columns of $\mb Z$ is to focus attention on a single component of the vector of regression coefficients, namely $\theta$.
If this model is correctly specified, the $t$-statistic provides valid and optimal inference for $\theta$.

Now suppose that the model \eqref{eq:lmY} is misspecified and $\mb Y$ is a
nonlinear function 
of the Gaussian covariates and a (not necessarily Gaussian) error term. Then, the standard $t$-test
in the misspecified linear model for $\theta=0$ still provides asymptotically valid inference for testing the
null-hypothesis that $\mb Y$ is conditionally independent of $\mb X$ given all other
covariates $\mb Z$, in the  
sense that the type I error is asymptotically correctly controlled. In fact if $\mb Y =
\theta \mb X + f(\mb Z, \mbb\varepsilon)$ for an essentially arbitrary measurable function $f$, standard
confidence intervals for $\theta$ will be valid in this more general
partially linear model setting. This 
perhaps comes as a surprise! As we will explain, it is connected to the
fact that in the misspecified model, the projected parameter in the
specified linear model corresponding to $\mb X$ is exactly zero when we have the conditional independence $\mb Y \ci \mb X \,|\, \mb Z$; this in turn is a
consequence of the regression relation between $\mb X$ and $\mb Z$ being
linear due to the 
Gaussian assumption, that is we have $\E(\mb X \,|\, \mb Z) = \mb Z \beta^X$
for some $\beta^X \in \R^p$. 

\medskip
This is just a simple motivating example, and we will relax some
of the assumptions to provide a more general methodology and theory.
In particular, we show that this phenomenon also extends to generalised linear models (GLMs) in the
sense that if $\mb X \ci \mb Y \,|\, \mb Z$, then the estimated coefficient
corresponding to $\mb X$ following a generalised linear regression of $\mb
Y$ on $(\mb X, \mb Z)$ will have mean zero asymptotically if either the GLM
is valid, or if a linear regression model for $\mb X$ on $\mb Z$ holds (and
in the latter case, the GLM can be arbitrarily misspecified). 

Thus in general, basic statistical inference procedures concerning linear models and GLMs have validity
beyond the restrictive parametric settings for which they are
designed.
Our focus in this work is studying this robustness property for which we use the
term
\begin{quote}
	 DEF, for `\textbf{d}ouble-\textbf{e}stimation-\textbf{f}riendly'. The word ``double'' refers
	to the issue of specifying and estimating two models, and the double
	estimation leads then to more ``friendly'' results where valid inference is provided if either model is well-specified.
\end{quote}
	With this term DEF we want to clearly distinguish it from double
	robustness, a concept whose relation to DEF is described below in Section
	\ref{sec:doublerobustness}.

	A substantial part of this work considers DEF methodology in
	high-dimensional regression where $p \gg n$.
	Driven by demands from a range
	of application areas, but perhaps most notably genomics, high-dimensional
	regression has received a great deal of attention over the last two
	decades; see for example the books
	\citet{buhlmann2011statistics,tibshirani2015statistical,wainwright_2019}
	and references therein. Whilst earlier work dealt primarily with point
	estimation of regression coefficients, more recently there has been a drive
	towards (Frequentist) uncertainty quantification, including testing for
	whether pre-specified regression coefficients are non-zero. Much of this
	work has centred on the so-called \emph{debiased Lasso}
	\citep{Zhang2013,vandegeer2014} which gives a construction of a coefficient
	estimate that unlike the more standard Lasso \citep{tibshirani96regression}
	on which it is based, is asymptotically unbiased and normally distributed;
	it can therefore serve as a basis for forming confidence intervals and
	hypothesis tests about the unknown true coefficient vector. 
	
	The debiased Lasso has been a major advance for inference in high-dimensional settings. However the validity of the statistical inferences it provides rests on the somewhat strong assumption that the true coefficient vector is highly sparse. For example, when testing whether $\mb Y = \mb Z \beta^Y + \mbb\varepsilon$, i.e.\ if the coefficient for $\mb X$ is $0$, guarantees for the debiased Lasso require that $s_Y := |\{j : \beta^{Y}_j \neq 0\}|$ satisfies $s_Y = o(\sqrt{n} / \log(p))$. 
	Given the preceding discussion, it is natural to ask whether the debiased Lasso is in some sense DEF. We show in this work that, with some small modifications, a version of the debiased Lasso has the DEF property. Specifically, a modified debiased Lasso gives a valid test for $\mb X \ci \mb Y \,|\, \mb Z$ if either the $X$-model, that is the model for $\mb X$ regressed on $\mb Z$, or the $Y$-model is a sparse linear model. Confidence intervals derived from the debiased Lasso however are not DEF and do rely heavily on a sparse linear $Y$-model. We demonstrate that confidence intervals constructed via inverting a DEF hypothesis test can lead to much better coverage properties. Whilst not part of the main focus of this work, we also show how a related approach may be used to construct confidence intervals for $w^T\beta^Y$, where $w \in \R^p$ is a possibly dense contrast vector.
	
	In many settings, for example when $\mb X$ is binary, a linear model for $\mb X$ on $\mb Z$ seems unlikely to hold. It would therefore be desirable to have a DEF procedure for testing the conditional independence relationship $\mb X \ci \mb Y \,|\, \mb Z$ that is valid when either the $Y$-model or the $X$-model are sparse generalised linear models. For example when both $\mb Y$ and $\mb X$ are binary we might wish to specify both models as logistic regression models. By first adapting our proposed DEF procedure to settings with linear $X$- and $Y$-models with heteroscedastic errors, we show how generalised linear models can be handled within our DEF methodology.
	
	Below we mention some related work. We first discuss how our DEF concept and methodology relates to the literature on double robustness, and then look at other work in high-dimensional inference that bears some relation to ours here.
	
	\subsection{Relation to double robustness} \label{sec:doublerobustness}
	The concept of double robustness has been developed in the context of missing values and
	causal effects; the latter can be seen as a missing value problem with
	unobserved potential outcomes. One specifies a model for the
	response and a model for the missingness (e.g., unobserved potential outcome),
	both as a function of covariates. The double robustness property is then (typically) as follows: if only one of the models is correctly specified, one
	can still obtain consistent estimates of average effects. This conclusion comes as a result of the bias of a doubly robust estimator taking the form of a product of estimation errors relating to each of the aforementioned models. In order for the product to tend to zero, only one of the terms in the product need tend to zero; we refer to
	\citet{robins1995semiparametric,scharfstein1999adjusting,kang2007demystifying,cao2009improving,rotnitzky2012improved},
	among many other contributions in the literature. 
	
	Whilst the philosophy of DEF is similar to that of double robustness in that it aims to ``give the analyst two
	chances, instead of only one, to make a valid inference''
	\citep{bangrobins2005}, there are several differences.
	Firstly, we are asking for valid inferential procedures, i.e.\ hypothesis tests and confidence intervals, when either the $X$-model or the $Y$-model is misspecified. Whereas for consistency, it suffices for one of the terms composing the bias to go to zero, for our purposes this would need to vanish at a rate dominated by the variance which is typically $n^{-1/2}$. The requirement that the product of estimation error rates bounding the bias goes zero faster than $n^{-1/2}$ has been referred to as \emph{rate double robustness} \citep{2019arXiv190403737S}. However directly applying known estimation error rates for high-dimensional regression to achieve rate double robustness 
	gives rise to procedures for hypothesis testing that require both the $X$ and $Y$-models to be sparse regression models with sparsity levels $s_X, s_Y = o(\sqrt{n}/\log(p))$ \citep{chernozhukov2016double, shah2018hardness, dukes2018high}; a stronger requirement than needed for the debiased Lasso, which only assumes a sparse $Y$-model, and stronger still than our DEF methodology, which requires either a well-specified sparse $Y$-model or $X$-model.
	
	In parallel work to ours, \citet{bradic2019sparsity} introduce the concept of \emph{sparsity double robustness} in the context of estimation of average treatment effects that refers to a weakening of the strong sparsity conditions imposed by rate double robustness above; however in contrast to our DEF principle, this still requires sparse $X$ and $Y$-models.
	
	A second difference is that whereas doubly robust methods are typically
	semiparametrically efficient as they are often derived by considering
	efficient influence functions for the parameters at hand, this sort of efficiency
	does not necessarily arise in the more general settings covered by our idea
	of DEF inference.
	Because of these differences we use the new terminology to distinguish
	the concept from double robustness.
	
	\subsection{Other related work} \label{sec:existing}
	In the low-dimensional setting, early work on single-index models \citep{MR689741,MR1015136,MR1105834} has shown that OLS regression on Gaussian covariates can correctly estimate the direction of the vector of regression coefficients up to an unknown sign. This property is somewhat related to the DEF property of OLS, though deals with a rather specific form of misspecification of a linear model. 
	
	The concept of leveraging an $X$-model in assessing the contribution of a covariate $\mb X$ to a response $\mb Y$ whilst controlling for additional covariates $\mb Z$ has a long history, and the modelling of propensity scores when estimating average treatment effects is one example of this \citep{rosenbaum1983central}. The work of \citet{robins1992estimating} proposes to exclusively estimate an $X$-model in more general settings, and this idea has also appeared more recently in the model-$X$ knockoff framework \citep{candes2018panning}. The conceptual difference though is that with
	DEF (and also double robustness as discussed above), both the $X$-model and $Y$-model are estimated but one does not need to know
	which of the two models is correct.
	
	Some recent work has looked at DEF procedures for different
	high-dimensional settings. \citet{shah2018goodness} studied a certain
	regularised partial correlation proposed in \citet{ren2015asymptotic}; the
	latter work shows this test statistic is valid for testing $\mb X \ci \mb Y
	\,|\, \mb Z$ when both the $X$-model and $Y$-models are sparse linear models,
	whilst the former shows in fact only the $Y$-model needs to be true for
	correct type I error control. As the test statistic is symmetric in $\mb X$
	and $\mb Y$, we can further conclude it has the DEF property. Our proposed DEF methodology
	for the high-dimensional setting builds on this work, generalising it to
	allow for generalised linear $X$ and $Y$-models. This approach is not the
	only possibility for DEF methodology in the high-dimensional setting, and
	\citet{zhu2018significance} look at another similar test statistic they call CorrT that delivers hypothesis tests with asymptotic type I error control in the setting
	where the $Y$-model is permitted to be a dense linear model, whilst the
	$X$-model must be a sparse linear model. Again, this test statistic has a
	DEF-like property as a consequence of its symmetry, though the dense linear model still entails some restrictions on the model class, see the discussion following Theorem~\ref{thm:DEF_lin} in Section~\ref{sec:high_lin}.
	
	\citet{buhlmann2015high} consider inference with
	the debiased Lasso in misspecified linear models, but where the best linear predictor of the response given covariates, is sparse, and the $X$-model is linear. This is related to our results and methodology here, though in contrast we aim for valid inference with no sparsity requirements on one of either the $X$ or $Y$-models.
	We note that our work also connects more generally to a thriving literature on high-dimensional inference. We refer to \citet{dezeure2015high} for a review of some of the most important developments that are related to our work here.
	
	\subsection{Organisation of the paper}
	The rest of the paper is organised as follows. In Section~\ref{sec:low} we study the low-dimensional setting and formally set out the DEF properties of standard inference procedures for linear and generalised linear models. We then turn to the high-dimensional setting and study in Section~\ref{sec:high_lin} the case where we allow either the regression model for $\mb Y$ on $\mb Z$ or that for $\mb X$ on $\mb Z$ to be linear. In Section~\ref{sec:high_conf} we detail the construction of confidence intervals in partially linear high-dimensional models using the classical duality between confidence regions and hypothesis tests. We then study the setting where the models of $\mb Y$ and $\mb X$ are generalised linear models. Some numerical experiments are presented in Section~\ref{sec:experiments} and we conclude with a discussion in Section~\ref{sec:discuss}. The appendix contains proofs omitted in the main text, a construction for confidence regions for $w^T\beta^Y$ based on the methodology set out in Section~\ref{sec:high_lin}, a description of how square-root Lasso solutions may be computed given regular Lasso solutions, and some additional numerical experiments.
	
	
	\section{Low dimensions} \label{sec:low}
	Recall that $(\mb Y, \mb X, \mb Z) \in \R^n \times \R^n \times \R^{n\times p}$ and we are interested in the relationship between $\mb Y$ and $\mb X$, and specifically testing the conditional independence $\mb X \ci \mb Y \,|\, \mb Z$. We first study the DEF property of the standard $t$-statistic in the linear model, before turning to generalised linear models in Section~\ref{sec:low_gen}.
	\subsection{Linear models} \label{sec:low_lin}
	Let $\tilde{\mb Z} := (\mb X \,, \mb Z) \in \R^{n \times (p+1)}$ and let
	$(\hat{\theta}, \hat{\beta}^Y) \in \R \times \R^p$ be the regression
	coefficient vector from an OLS regression of $\mb Y$ on $\tilde{\mb
		Z}$. Further let $\tilde{\mb P}$ and $\mb P$ be the orthogonal
	projections on to $\tilde{\mb Z}$ and $\mb Z$ respectively. Also define
	$\tilde{\sigma}^2 = \|\mb Y - \tilde{\mb P} \mb
	Y\|_2^2/(n-p-1)$. The usual
	$t$-statistic for testing the significance of variable $\mb X$ is given by $T_{\OLS} := \hat{\theta} / \sqrt{\{(\tilde{\mb Z}^T \tilde{\mb Z})^{-1}\}_{11} \tilde{\sigma}^2}$. Denote by $\mb R:= (\mb I - \mb P)\mb X$
	the residuals from regressing $\mb X$ on $\mb Z$.
	
	Consider the following set of assumptions.
	\begin{itemize}
		\item[(Y1)] We have $\mb Y = \mb Z \beta^Y + \mbb\varepsilon$ with $\E(\varepsilon_i \,|\, \mb Z) =0$, $\E(\varepsilon_i^2 \,|\, \mb Z) = \sigma^2>0$, $\E(|\varepsilon_i|^{2+\delta} \,|\, \mb Z) < M$ for some constants $M,\delta, \sigma^2 >0$, and the $\varepsilon_i$ are independent conditional on $\mb Z$.
		\item[(Y2)] We have $\pr(\mb R = \mb 0) \to 0$ and for some $\delta > 0$,
		\begin{equation} \label{eq:R}
			A_n := \begin{cases}
				\frac{1}{\|\mb R\|_2^{2+\delta}} \sum_{i=1}^n |R_{i}|^{2+\delta} & \quad \text{ if } \mb R \neq \mb 0, \\
				0 & \quad \text{ if } \mb R = \mb 0,
			\end{cases} 
		\end{equation}
	satisfies $A_n \inprob 0$.
	\end{itemize}
	Condition (Y1) formalises the particular form of the linear model we assume
	here (under the null-hypothesis), which includes the normal linear model, for example, but is rather more general.
	Condition (Y2) enforces that no individual residual is too extreme. Indeed, it is sufficient that $\max_i R_i / \|\mb R\|_2 \inprob 0$. This would typically be satisfied if the rows of $(\mb X, \mb Z)$ were i.i.d.\ for example, but is much weaker.
	We also introduce the following.
	\begin{itemize}
		\item[(X$j$)] The equivalent of (Y$j$) above but with $\mb X$
		replaced with $\mb Y$ and vice versa, for $j \in \{1,2\}$.
	\end{itemize}
	The Theorem below shows that $T_{\OLS}$ has a DEF property.
	\begin{thm} \label{thm:lm}
		Suppose $p/n \to 0$.
		If either (X1) and (X2) or (Y1) and (Y2) hold, then under the null hypothesis that $\mb X \ci \mb Y \,|\, \mb Z$, we have $T_{\OLS} \indist \mathcal{N}(0, 1)$.
	\end{thm}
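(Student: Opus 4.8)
The plan is to reduce $T_{\OLS}$ to the sample partial correlation of $\mb X$ and $\mb Y$ given $\mb Z$,
\[
\hat\rho \;=\; \frac{\mb R^T \mb Y}{\|\mb R\|_2\,\|(\mb I - \mb P)\mb Y\|_2},
\]
which is symmetric in $\mb X$ and $\mb Y$ because $\mb R^T\mb Y = \mb X^T(\mb I - \mb P)\mb Y = \big((\mb I - \mb P)\mb X\big)^T\big((\mb I - \mb P)\mb Y\big)$. First I would record the standard identities $\hat\theta = (\mb R^T\mb Y)/(\mb R^T\mb R)$ (Frisch--Waugh--Lovell) and $\{(\tilde{\mb Z}^T\tilde{\mb Z})^{-1}\}_{11} = 1/(\mb R^T\mb R)$ (Schur complement), together with $\tilde{\mb P} = \mb P + \mb R\mb R^T/(\mb R^T\mb R)$, which yields $(n-p-1)\tilde\sigma^2 = \|(\mb I - \mb P)\mb Y\|_2^2(1-\hat\rho^2)$. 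Substituting these into the definition of $T_{\OLS}$ gives the classical identity $T_{\OLS} = \sqrt{n-p-1}\,\hat\rho/\sqrt{1-\hat\rho^2}$, with no sign ambiguity since $\sgn(T_{\OLS}) = \sgn(\mb R^T\mb Y) = \sgn(\hat\rho)$. On the event $\{\mb R = \mb 0\}$, which has probability tending to $0$ under (Y2) or (X2), $T_{\OLS}$ is not defined and may be discarded. It therefore suffices to show $\sqrt{n-p-1}\,\hat\rho \indist \mathcal N(0,1)$; then $\hat\rho \inprob 0$, $\sqrt{1-\hat\rho^2} \inprob 1$, and Slutsky finishes the argument.

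By the symmetry of $\hat\rho$ it is enough to treat the case where (Y1) and (Y2) hold. Under (Y1) we have $(\mb I - \mb P)\mb Y = (\mb I - \mb P)\mbb\varepsilon$ and $\mb R^T\mb Y = \mb R^T\mbb\varepsilon$, so
\[
\sqrt{n-p-1}\,\hat\rho \;=\; \frac{\mb R^T\mbb\varepsilon}{\sigma\|\mb R\|_2}\cdot\left(\frac{\|(\mb I - \mb P)\mbb\varepsilon\|_2^2}{\sigma^2(n-p-1)}\right)^{-1/2}.
\]
For the first factor, the key point is that under the null hypothesis $\mbb\varepsilon = \mb Y - \mb Z\beta^Y$ is $\sigma(\mb Y,\mb Z)$-measurable and $\mb Y \ci \mb X \mid \mb Z$, so the conditional law of $\mbb\varepsilon$ given $(\mb X,\mb Z)$ coincides with its law given $\mb Z$; hence, conditionally on $(\mb X,\mb Z)$ (which fixes $\mb R$ and $\mb P$), the $\varepsilon_i$ are independent with mean $0$, variance $\sigma^2$, and third absolute moments at most $M$. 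The Berry--Esseen theorem for sums of independent, not identically distributed, summands then gives $\sup_t\big|\pr\big(\mb R^T\mbb\varepsilon/(\sigma\|\mb R\|_2) \le t \mid \mb X,\mb Z\big) - \Phi(t)\big| \le C M\sigma^{-3}\sum_i|R_i|^3/(\sum_i R_i^2)^{3/2}$ on $\{\mb R \neq \mb 0\}$; taking expectations and invoking precisely the two conditions in (Y2) yields $\mb R^T\mbb\varepsilon/(\sigma\|\mb R\|_2) \indist \mathcal N(0,1)$.

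For the second factor, write $\|(\mb I - \mb P)\mbb\varepsilon\|_2^2 = \|\mbb\varepsilon\|_2^2 - \|\mb P\mbb\varepsilon\|_2^2$. Since $\E(\|\mb P\mbb\varepsilon\|_2^2 \mid \mb Z) = \sigma^2\tr\mb P \le \sigma^2 p$, Markov gives $\|\mb P\mbb\varepsilon\|_2^2/n \inprob 0$, while $n^{-1}\|\mbb\varepsilon\|_2^2 = n^{-1}\sum_i\varepsilon_i^2 \inprob \sigma^2$ by a weak law of large numbers for independent arrays: the only moment control available is $\sup_i\E(|\varepsilon_i|^3\mid\mb Z) \le M$, but this already makes $\{\varepsilon_i^2\}$ uniformly integrable, which is all the weak law needs (truncate $\varepsilon_i^2$ at a level $K$, bound the variance of the truncated average by $O(K/n)$, and let $K\to\infty$). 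Assumption (A1) ensures $\mb Z$ has full column rank and $\tr\mb P = p$ for all large $n$, so $\|(\mb I - \mb P)\mbb\varepsilon\|_2^2/(\sigma^2(n-p-1)) \inprob 1$ and Slutsky combines the two factors. The main obstacle in all this is the passage from the conditional Berry--Esseen bound to unconditional convergence in distribution: this is exactly where the slightly unusual hypothesis (Y2) enters, and one must take care that the correct normalisation there is the conditional standard deviation $\sigma\|\mb R\|_2$ rather than anything depending on $\mbb\varepsilon$. The only other mildly delicate point is running the weak law with third moments only, via uniform integrability rather than a direct variance bound.
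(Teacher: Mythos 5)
Your proposal is correct and follows essentially the same route as the paper: reduce $T_{\OLS}$ to the symmetric partial correlation $\hat\rho$, exploit the symmetry to assume (Y1)--(Y2), condition on $(\mb X,\mb Z)$ so that the conditional independence transfers the error-moment conditions, apply Berry--Esseen and integrate using (Y2), and handle the denominator by a weak law before invoking Slutsky. Your treatment of $n^{-1}\|(\mb I-\mb P)\mbb\varepsilon\|_2^2$ is in fact slightly more careful than the paper's (which appeals to Chebyshev's weak law for $\varepsilon_i^2$ despite only third moments being assumed): bounding $\|\mb P\mbb\varepsilon\|_2^2$ via $\tr \mb P\le p$ and using uniform integrability of $\{\varepsilon_i^2\}$ closes that small gap.
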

	The result may be viewed as a consequence of the close relationship between the $t$-statistic above and the partial correlation
	\[
	\hat{\rho} := \frac{\mb X^T (\mb I - \mb P) \mb Y}{\|(\mb I - \mb P)\mb X\|_2 \|(\mb I - \mb P)\mb Y\|_2}.
	\]
	This can also be interpreted as a test statistic based on a score test for $\theta=0$ when it is assumed the errors are Gaussian.
	One can verify that
	\begin{equation} \label{eq:parcor_rel}
		T_{\OLS} = \sqrt{n-p-1} \frac{\hat{\rho}}{\sqrt{1-\hat{\rho}^2}},
	\end{equation}
	so the distributional result for $T_{\OLS}$ follows from $\sqrt{n}\hat{\rho} \indist \mathcal{N}(0, 1)$. As $\hat{\rho}$ is symmetric in $\mb X$ and $\mb Y$ it is unsurprising that this has a DEF property.
	Indeed, the DEF approach suited to the high-dimensional setting we present in Section~\ref{sec:high}, is based on a certain regularised partial correlation.
	
	
	We also remark that under the assumption that $\mb Y = \mb Z \beta^Y + \mbb\varepsilon$ with $\mbb\varepsilon \sim \mathcal{N}(\mb 0 , \sigma^2 \mb I)$, we have the exact distributional relationship
	\[
	\hat{\rho}\sqrt{\frac{n-p-1}{1-\hat{\rho}^2}} \sim t_{n-p-1}.
	\]
	The symmetry of this statistic in $\mb X$ and $\mb Y$ means that the distributional result also holds when an analogous normal linear model for $\mb X$ on $\mb Z$ holds. This may be used to yield a DEF test for conditional independence with exact type I error control in finite samples, under these additional Gaussianity assumptions.
	
	\begin{ex} \label{ex:lm}
		The famous diabetes dataset of \citet{efron04least} contains $p=10$ predictors (age, sex, BMI, etc.) measured for $n=442$ patients. We take these covariates as our matrix $\mb Z \in \R^{n \times p}$ and generate an additional predictor $\mb X \in \R^n$ with entries $X_i = \sum_j Z_{ij} + \varepsilon^X_i$ where $\varepsilon^X_i + 1 \iid \text{Exp}(1)$. We ignore the original response of the design matrix and generate a new response $\mb Y \in \R^n$ that depends nonlinearly on $\mb Z$ through $Y_i =\eta_i \zeta_i$ where $\zeta_i \iid \chi^2_1$ and
		\begin{align}
			\eta_i = \sum_{j,k} \frac{\exp(Z_{ij}Z_{ik})}{1 + \exp(Z_{ij}Z_{ik})}. \label{eq:nonlin_pred}
		\end{align}
		In this setup we then have $Y_i \ci X_i \,|\, Z_i$ and the $X$-model is a
		linear regression model. Theorem~\ref{thm:lm} suggests that the
		$t$-statistic $T_{\OLS}$ corresponding to $\mb X$ should have a
		distribution well-approximated by a standard normal. The left panel of
		Figure~\ref{fig:lm} plots the histogram of $T_{\OLS}$ computed on 500
		simulated datasets generated through the construction above. We do indeed
		see a close agreement with a standard normal density, verifying the
		theoretical result. The right panel plots the coefficient estimate
		$\hat{\theta}$ corresponding to $\mb X$ when the equation for $\mb Y$ has
		$\mb X$ added (i.e., the null-hypothesis does not hold). It is easy to see that compared to the previous setup, this coefficient will be shifted by $1$, and hence asymptotically should have a Gaussian distribution centred on $1$, as we observe in the plot.
		\begin{figure*}[h]
			\centering
			\includegraphics[width=\textwidth]{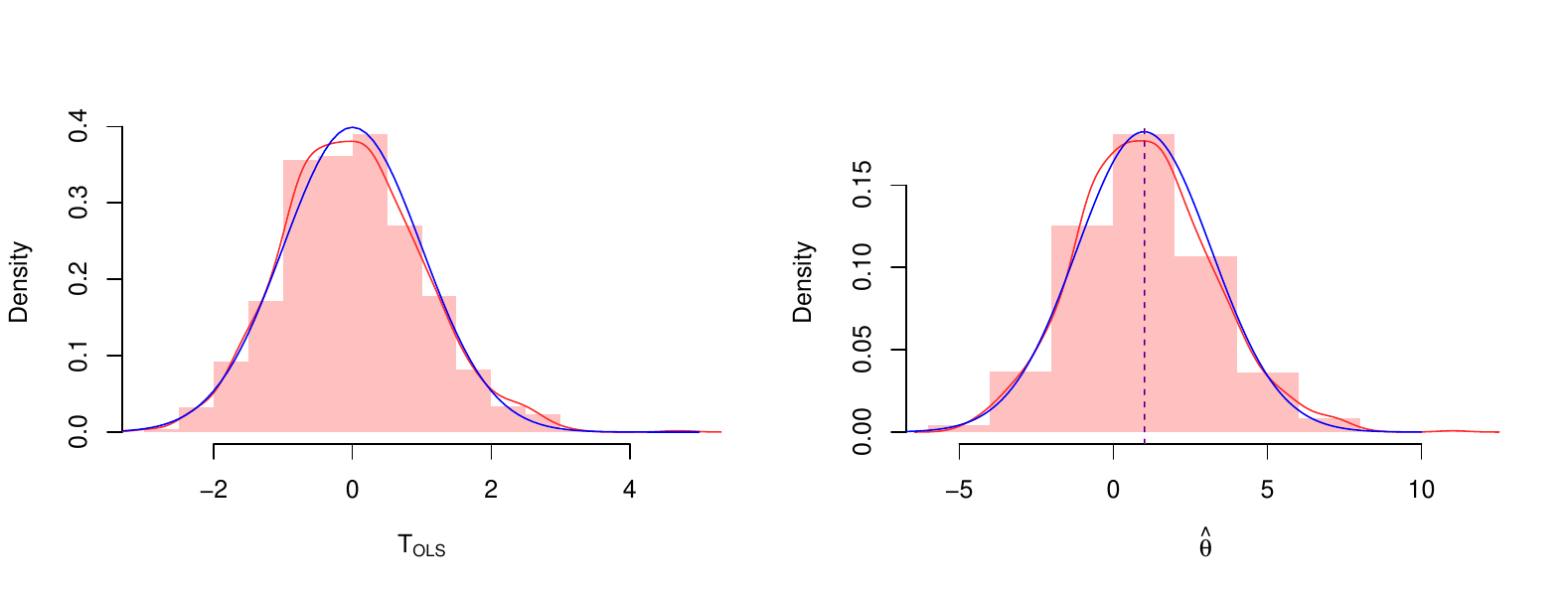}
			\caption{Histograms of $T_{\OLS}$ (left plot) and $\hat{\theta}$ (right plot) for the setup described in Example~\ref{ex:lm}. The red curves are kernel density estimates. We see close agreement with the theoretical normal density (blue curves). The vertical dashed red lines and blue lines in the right plot are the empirical and theoretical means respectively; their proximity in this example makes them hard distinguish visually.\label{fig:lm}}
		\end{figure*}
	\end{ex}
	
	\subsection{Generalised linear models} \label{sec:low_gen}
	It is well known that maximum likelihood estimators under misspecification are, given regularity conditions, asymptotically normal about a parameter vector corresponding to the model closest to the ground truth in terms of Kullback--Leibler divergence \citep{huber1967behavior,white1982maximum}. This fact is typically used as reassurance that whilst all statistical models are wrong, provided one is working with a model that is a good enough approximation to the truth, maximum likelihood estimation is nevertheless useful. 
	However, as we shall explain, in terms of conditional independence testing, maximum likelihood estimation of generalised linear models can form the basis of a valid test even under severe misspecification.
	
	In this section we will assume that the rows $(X_i, Y_i, Z_i)$ of $(\mb X, \mb Y, \mb Z) \in \R^{n \times (2+p)}$ are independent copies of the random triple $(X, Y, Z) \in \R \times \R \times \R^p$. Consider a generalised linear model relating response vector $\mb Y$ to covariates $(\mb X, \mb Z)$, or more generally, a model where the density $f_{Y|X, Z}$ of $Y$ conditional on $(X, Z)$ (with respect to a measure $\mu$) takes the form
	\begin{equation} \label{eq:log_lik}
		f_{Y|X, Z}(y|x, z) = L(x\theta + z^T\beta^Y; y)
	\end{equation}
	for $(\theta, \beta^Y) \in \Theta \subseteq \R^{p+1}$.
	We will assume that $L$ is twice differentiable in its first argument.
	Define $\ell := \log L$ and $U := \ell'$ where the prime denotes a derivative with respect to the first argument; we will typically suppress the dependence of $U$ on its second argument $y$ for simplicity.
	Under regularity conditions, the maximum likelihood estimator
	\[
	(\hat{\theta}, \hat{\beta}^Y) := \argmin_{(t, \beta) \in \Theta} -\sum_{i=1}^n \ell(X_i t + Z_i^T\beta; Y_i)
	\]
	is asymptotically normal centred on $(\theta^*, \beta^*)$, which solve for $(t, \beta) \in \Theta$ the score equations
	\begin{align}
		\E \{X \,U(X t + Z^T\beta)\} &=0 \label{eq:x_score} \\
		\E\{Z \,U(X t + Z^T\beta)\} &=0. \label{eq:z_score}
	\end{align}
	When \eqref{eq:log_lik} holds (which includes as a special case when a generalised linear model
	is correct), under regularity conditions, we will have $(\theta^*, \beta^*) = (\theta, \beta^Y)$.
	In order for inference based on $\hat{\theta}$ to provide useful
	information concerning the conditional independence $X \ci Y \,|\,Z$ when
	$\eqref{eq:log_lik}$ does not hold, we would like $\theta^* = 0$ in the
	case of conditional independence. Analogously to the case with linear
	models discussed in the previous section, we have that  regardless of the
	form of the $Y$-model, provided the $X$-model is
	linear, it holds that $\theta^*=0$; here though we additionally require that the solution to \eqref{eq:x_score} and \eqref{eq:z_score} is unique to derive this conclusion.
	\begin{thm} \label{thm:gen_lin}
		Suppose $X \ci Y \,|\,Z$.
		Let $\beta^\dagger \in \R^p$ maximise the expected log-likelihood $\E\ell(Z^T\beta;Y)$ over $\beta$. Assume regularity conditions set out in Section~\ref{sec:reg1} of the appendix.
		Suppose that either the $Y$-model is well-specified so \eqref{eq:log_lik} holds, or the $X$-model is linear so $\E(X\,|\,Z) = Z^T\beta^X$. Then $(t, \beta) = (0, \beta^\dagger)$ satisfies the score equations \eqref{eq:x_score}, \eqref{eq:z_score}.
	\end{thm}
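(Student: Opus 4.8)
The plan is to verify the two score equations \eqref{eq:x_score} and \eqref{eq:z_score} directly at the point $(t,\beta)=(0,\beta^\dagger)$, handling both alternative hypotheses in a common framework that branches only at the very end. First I would dispose of \eqref{eq:z_score}. Since $\beta^\dagger$ maximises $\beta\mapsto\E\ell(Z^T\beta;Y)$, the regularity conditions (which allow differentiation under the expectation and force the maximiser to be interior) yield the stationarity identity $\E\{Z\,U(Z^T\beta^\dagger;Y)\}=\mathbf 0$, and this is exactly \eqref{eq:z_score} evaluated at $(0,\beta^\dagger)$. Hence the whole theorem reduces to establishing $\E\{X\,U(Z^T\beta^\dagger;Y)\}=0$.

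The key manipulation is to condition on $Z$. Since $X\ci Y\mid Z$ and, given $Z$, the random variable $U(Z^T\beta^\dagger;Y)$ is a measurable function of $Y$ alone, the conditional independence factorises the conditional expectation, giving
\[
\E\{X\,U(Z^T\beta^\dagger;Y)\}=\E\big[\,\E(X\mid Z)\,\E\!\big(U(Z^T\beta^\dagger;Y)\mid Z\big)\big].
\]
Under the assumption that the $X$-model is linear, I would substitute $\E(X\mid Z)=Z^T\beta^X$ and pull out the constant vector $\beta^X$, obtaining $(\beta^X)^T\E\big[Z\,\E(U(Z^T\beta^\dagger;Y)\mid Z)\big]=(\beta^X)^T\E\{Z\,U(Z^T\beta^\dagger;Y)\}$ by the tower property, which is $0$ by the stationarity identity for $\beta^\dagger$ just established. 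This branch uses nothing beyond the conditional independence, linearity of the $X$-model, and the first-order condition for $\beta^\dagger$.

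Under the alternative assumption that the $Y$-model is well specified, I would instead argue that the inner factor $\E(U(Z^T\beta^\dagger;Y)\mid Z)$ vanishes almost surely. Here I would first use that $X\ci Y\mid Z$ forces $f_{Y|X,Z}(y\mid x,z)=L(x\theta+z^T\beta^Y;y)$ to be free of $x$; the identifiability built into the regularity conditions — distinct linear predictors giving distinct conditional densities, together with non-degeneracy of $X$ given $Z$ — then forces $\theta=0$. Consequently the marginal law of $Y$ given $Z$ is itself the GLM $f_{Y|Z}(\cdot\mid z)=L(z^T\beta^Y;\cdot)$, so any maximiser $\beta^\dagger$ reproduces the true linear predictor, $Z^T\beta^\dagger=Z^T\beta^Y$ almost surely, and the classical fact that the score of a correctly specified likelihood has conditional mean zero — i.e.\ $\int \partial_1 L(z^T\beta^Y;y)\,d\mu(y)=\partial_1\!\int L(z^T\beta^Y;y)\,d\mu(y)=0$, using differentiation under the integral sign — gives $\E(U(Z^T\beta^\dagger;Y)\mid Z)=0$. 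Substituting into the display completes this branch as well.

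The step I expect to be the main obstacle is precisely this last branch: making rigorous the implication $\theta=0$ from the conditional independence, and then the claim that the marginal $Y\mid Z$ model is a GLM with linear predictor $Z^T\beta^\dagger$. Both lean entirely on identifiability and non-degeneracy assumptions sitting inside the regularity conditions — absent, say, an assumption ruling out $X$ being a deterministic nonlinear function of $Z$, the conclusion can genuinely fail — so most of the care in a full write-up goes into pinning down exactly which regularity hypotheses are invoked and why, rather than into any substantial computation.
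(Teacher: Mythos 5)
Your proof is correct and follows essentially the same route as the paper: the $z$-score equation comes from stationarity of $\beta^\dagger$ (differentiating under the expectation), and the $x$-score equation is then killed by combining that stationarity with the conditional independence applied to the part of $X$ orthogonal to $Z^T\beta^X$. The only differences are cosmetic --- the paper invokes Daudin's $L^2$ characterisation of conditional independence applied to the residual $\varepsilon = X - Z^T\beta^X$ where you factorise $\E\{X\,U(Z^T\beta^\dagger;Y)\mid Z\}=\E(X\mid Z)\,\E\{U(Z^T\beta^\dagger;Y)\mid Z\}$ directly (equivalent under the stated moment conditions $\E(X^2)<\infty$ and $\E\{U^2(Z^T\beta^\dagger)\}<\infty$), and you spell out the well-specified $Y$-model branch, including the identifiability issues it quietly relies on, which the paper simply dismisses as standard.
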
 
	Theorem~\ref{thm:gen_lin} shows that under the $X$-model, the parameter corresponding to the projection of the truth on to the purported $Y$-model is $0$ under conditional independence.
	A standard Wald test for whether $\theta=0$ will however not be valid under general misspecification as the asymptotic variance of $\hat{\theta}$ will not necessarily be given by the $(1, 1)$ entry of the inverse Fisher information matrix for $(\theta, \beta^Y)$. Indeed, it is well-known that, under regularity conditions, the variance of $\hat{\theta}$ is given by the sandwich formula
	\begin{equation} \label{eq:asymp_norm}
		\sqrt{n}\left(\begin{pmatrix} \hat{\theta} \\ \hat{\beta}^Y \end{pmatrix} - \begin{pmatrix}\theta^* \\ \beta^*\end{pmatrix} \right) \indist \mathcal{N}\left(0, H^{-1} V H^{-1}\right),
	\end{equation}
	where $V$ is the covariance matrix of the derivative of $\ell(X\theta +
	Z^T\beta;Y)$ with respect to $(\theta, \beta)$ evaluated at $(\theta^*,
	\beta^*)$ (satisfying the score equations \eqref{eq:x_score}, \eqref{eq:z_score}) and $H$ is the negative expectation of the corresponding Hessian matrix:
	\begin{align*}
		V &:= \E\left(\begin{pmatrix} X \\ Z\end{pmatrix} \begin{pmatrix} X \\ Z\end{pmatrix}^T U^2(X\theta^* + Z^T\beta^*) \right) \\
		H &:= -\E\left(\begin{pmatrix} X \\ Z\end{pmatrix} \begin{pmatrix} X \\ Z\end{pmatrix}^T U'(X\theta^* + Z^T\beta^*) \right).
	\end{align*}
	The matrices $V$ and $H$ may be estimated individually using the data via
	several methods \citep{mackinnon1985some}. However, if either the $X$-model is a homoscedastic linear model, or the $Y$-model holds, some simplifications are possible, as the result below describes.
	\begin{thm} \label{thm:gen_var}
		Suppose $X \ci Y \,|\,Z$ and assume regularity conditions set out in Section~\ref{sec:reg2} of the appendix.
		Suppose either \eqref{eq:log_lik} holds with $U = \ell'$, or $\E (X\,|\,Z) = Z^T\beta^X$. We additionally assume $\Var(X\,|\,Z)=\Var(X)$ in the latter case.
		Then we have
		\[
		(H^{-1} V H^{-1})_{11} = - (H^{-1})_{11} \frac{\E\{U^2(Z^T\beta^*)\}}{\E\{ U'(Z^T\beta^*)\}}.
		\]
	\end{thm}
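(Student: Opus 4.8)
The plan is to reduce the identity to the block structure of $V$ and $H$ and then exploit the conditional independence separately in the two cases. First I would invoke Theorem~\ref{thm:gen_lin}, together with the uniqueness part of the regularity conditions, to conclude $(\theta^*,\beta^*)=(0,\beta^\dagger)$, so that the linear predictor appearing throughout is $Z^T\beta^*$ and hence $U^2(Z^T\beta^*)$ and $U'(Z^T\beta^*)$ are functions of $(Z,Y)$ alone. Partitioning the $(p+1)$-vector $(X,Z^T)^T$ into its $X$-coordinate and its $Z$-coordinates gives $2\times2$ block forms for $V$ and $H$. Since $X$ is one-dimensional, I would then write $(H^{-1}VH^{-1})_{11}=a^TVa$ where $a:=H^{-1}e_1$ solves $Ha=e_1$, and solve this linear system for $a$ explicitly in terms of the blocks of $H$; this is the Schur-complement/partialling-out identity, and it also identifies $(H^{-1})_{11}=a_1$.

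In the case where \eqref{eq:log_lik} holds with $U=\ell'$: since $\theta^*=0$, the true conditional law of $Y$ given $(X,Z)$ is $L(Z^T\beta^*;\cdot)$ and does not involve $X$, consistent with $X\ci Y\mid Z$, so the Bartlett identities give $\E\{U^2(Z^T\beta^*)\mid Z\}=-\E\{U'(Z^T\beta^*)\mid Z\}$. Combining this with $X\ci Y\mid Z$ — which lets me replace conditioning on $(X,Z)$ by conditioning on $Z$ inside the expectations defining $V$ and $H$ — shows $V=H$ entrywise, whence $(H^{-1}VH^{-1})_{11}=(H^{-1})_{11}$. The integrated Bartlett identity $\E\{U^2(Z^T\beta^*)\}=-\E\{U'(Z^T\beta^*)\}$ then rewrites this as $-(H^{-1})_{11}\E\{U^2(Z^T\beta^*)\}/\E\{U'(Z^T\beta^*)\}$, which is the claim.

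In the case where $\E(X\mid Z)=Z^T\beta^X$ and $\Var(X\mid Z)=\Var(X)$: conditional independence gives $\E(X\mid Z,Y)=Z^T\beta^X$ and $\E(X^2\mid Z,Y)=\Var(X)+(Z^T\beta^X)^2$. Substituting these into the block expectations shows that, writing $M:=\E\{ZZ^TU^2(Z^T\beta^*)\}$ and $N:=\E\{ZZ^TU'(Z^T\beta^*)\}$, the $Z$-blocks of $V$ and $H$ are $M$ and $-N$, their cross-blocks are $(\beta^X)^TM$ and $-(\beta^X)^TN$, and their $XX$-entries are $\Var(X)\E\{U^2(Z^T\beta^*)\}+(\beta^X)^TM\beta^X$ and $-(\Var(X)\E\{U'(Z^T\beta^*)\}+(\beta^X)^TN\beta^X)$. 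Solving $Ha=e_1$ then yields $a=a_1\,(1,-(\beta^X)^T)^T$ with $a_1=(H^{-1})_{11}=-1/(\Var(X)\E\{U'(Z^T\beta^*)\})$, and a short computation shows the quadratic form $a^TVa$ collapses — every $M$-term cancelling against another — to $a_1^2\,\Var(X)\,\E\{U^2(Z^T\beta^*)\}$, which again equals $-(H^{-1})_{11}\E\{U^2(Z^T\beta^*)\}/\E\{U'(Z^T\beta^*)\}$.

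Beyond the bookkeeping, the main obstacle is the second case: it is precisely the cancellation of the $M$-terms in $a^TVa$ that produces the clean formula, and this relies on homoscedasticity — without $\Var(X\mid Z)=\Var(X)$, the term $\E(X^2\mid Z)$ carries an extra $Z$-dependent piece that feeds an uncancelled contribution into $V_{XX}$ and the identity fails. I would also need the regularity conditions to guarantee invertibility of $H$ (hence of the relevant Schur complement and of $N$), non-degeneracy of $X$, and $\E\{U'(Z^T\beta^*)\}\neq 0$, so that $a$ is well-defined and the block manipulations are legitimate; and in the first case, that the regularity conditions license the differentiation under the integral needed for the Bartlett identities.
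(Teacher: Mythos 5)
Your proof is correct and follows essentially the same route as the paper's: the well-specified case via the Bartlett/information identities giving $V=H$, and the linear-$X$ case by computing the blocks of $H$ and $V$ under conditional independence, identifying the first column of $H^{-1}$ as a multiple of $(1, -(\beta^X)^T)^T$ via the Schur complement, and evaluating the quadratic form, with homoscedasticity producing the final cancellation. (Your observation that $\E\{U^2(Z^T\beta^*)\}/\E\{U'(Z^T\beta^*)\}=-1$ in the well-specified case carries the correct sign; the paper's prose states this ratio as $1$, which appears to be a typo.)
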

	The correction factor for the usual inverse of the Fisher information may be readily estimated by
	\begin{equation} \label{eq:correction}
		\hat{C}_1 := - \frac{\sum_{i=1}^nU^2(Z_i^T\hat{\beta}^Y)}{\sum_{i=1}^n U'(Z_i^T\hat{\beta}^Y)},
	\end{equation}
	or indeed a variant of the above with $Z_i^T\hat{\beta}^Y$ replaced
	everywhere by $X_i\hat{\theta} + Z_i^T\hat{\beta}^Y$ which we will refer to as $\hat{C}_2$. Writing $\hat{H}$ for the empirical version of $H$,
	\[
	\hat{H} := -\frac{1}{n} \sum_{i=1}^n \begin{pmatrix} X_i \\ Z_i\end{pmatrix} \begin{pmatrix} X_i \\ Z_i\end{pmatrix}^T U'(X_i\hat{\theta} + Z_i^T\hat{\beta}^Y),
	\]
	we may define for $j=1,2$, the test statistics
	\[
	T_{\GLM,j} := \frac{\sqrt{n}\hat{\theta}}{\sqrt{\hat{C}_j (\hat{H}^{-1})_{11}}}.
	\]
	Putting together Theorems~\ref{thm:gen_lin} and \ref{thm:gen_var} we have the following result.
	\begin{thm} \label{thm:wald}
		Suppose $X \ci Y \,|\, Z$ and \eqref{eq:asymp_norm} holds where $(\theta^*,\beta^*)$ is the unique solution in $(t,\beta)$ to \eqref{eq:x_score} and \eqref{eq:z_score}. Assume that $\hat{H} \inprob H$ with $H$ positive definite and assume the regularity conditions set out in Section~\ref{sec:reg3}. Suppose that either the $Y$-model is well-specified so \eqref{eq:log_lik} holds, or the $X$-model is linear so $\E(X\,|\,Z) = Z^T\beta^X$. Then for $j=1,2$ we have
		\[
		T_{\GLM,j} \indist \mathcal{N}(0, 1).
		\]
	\end{thm}
	\begin{ex} \label{ex:pois}
		We use a similar setup to Example~\ref{ex:lm} but here generate the response $\mb Y \in \R^n$ according to $Y_i \iid \text{Poisson}(\mu_i)$ with
		\[
		\log(\mu_i) = a_1 \sum_j Z_{ij} + \sigma a_2 \eta_i
		\]
		with $\sigma \in \{0, 2, 4\}$ and factors $a_1$ and $a_2$ chosen so the maximum absolute value over $i$ of the two terms above is $3$ to ensure $\E Y_i$ does not take values that are too large. We consider testing the significance of the variable $\mb X$ using (a) standard Wald-based $p$-values assuming a Poisson log-linear model, (b) the equivalent using a quasi-Poisson likelihood and (c) using $T_{\GLM,2}$.
		We plot in Figure~\ref{fig:pois} the empirical distribution functions of the $p$-values observed over $500$ replicates of the three settings determined by $\sigma$. As expected, for the well-specified case with $\sigma=0$ all $p$-values are roughly uniformly distributed. However for increasing levels of misspecification, the standard $p$-values (a) tend to be more anti-conservative, a phenomenon which occurs to a lesser extent for the quasi-likelihood-based $p$-values (b). The correction factor (c) ensures that $p$-values corresponding to $T_{\GLM,2}$ are  approximately uniform across all of the settings considered.
		\begin{figure*}[h]
			\centering
			\includegraphics[width=\textwidth]{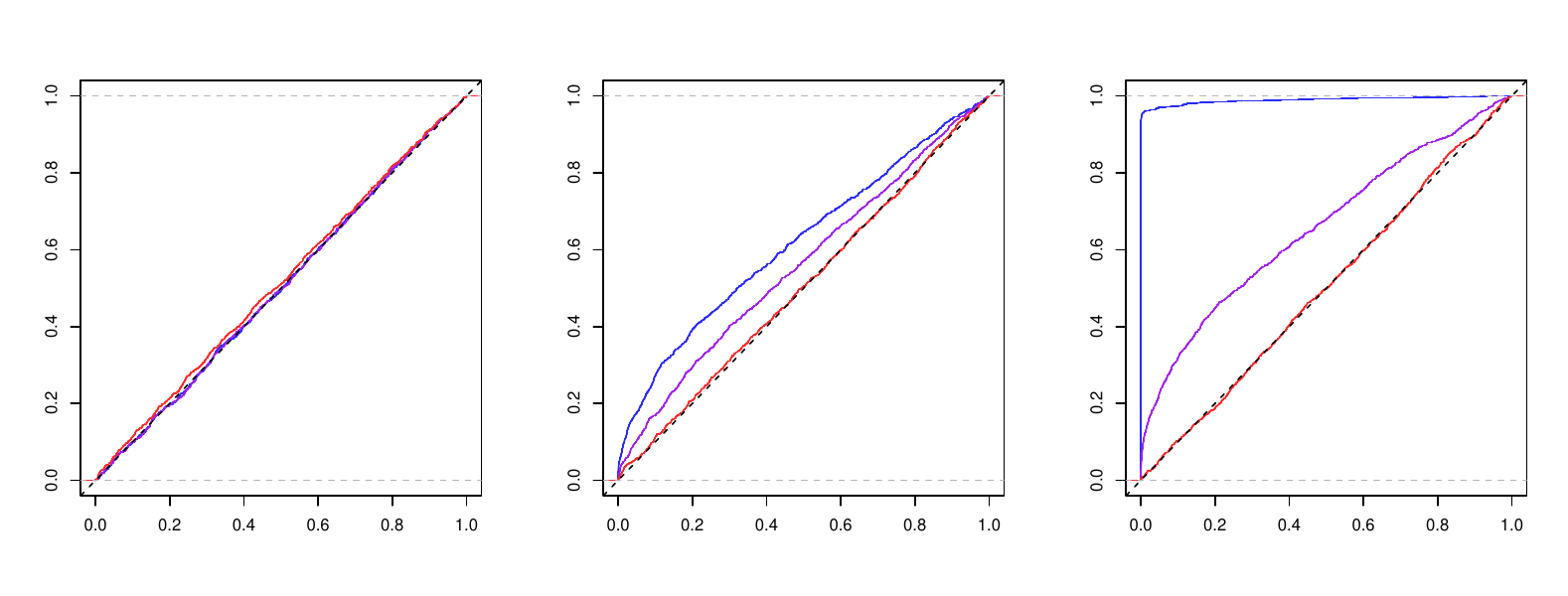}
			\caption{Empirical distribution functions of $p$-values from the simulation setups of Example~\ref{ex:pois} with $\sigma=0, 2, 4$ from left to right. Blue, purple and red curves correspond to naive $p$-values (a), quasi-likelihood-based $p$-values (b) and $p$-values based on $T_{\GLM,2}$ 
				(c), respectively. Type I errors of the resulting tests are well-controlled for (c), but (a) and (b) fail to maintain nominal levels under misspecification.\label{fig:pois}}
		\end{figure*}
	\end{ex}

	\section{High dimensions} \label{sec:high}
	We have seen in the previous section how classical linear and generalised
	linear model inferential tools have the DEF property. In the case of linear
	models, this could be deduced from the similarity of the standard
	$t$-statistic to partial correlation. For generalised linear models, the
	DEF property is perhaps more surprising. Our analysis first used the fact
	that maximum likelihood converges to a projection of the ground truth, and
	then considered the projected parameters themselves. There is however no
	analogue of the classical Huber--White results on the properties of
	maximum likelihood in nonlinear models under misspecification
	available for high-dimensional 
	estimators. Our approach to DEF inference in high-dimensional settings will
	therefore be based around versions of partial correlation. We first study
	linear models before turning to the case of high-dimensional generalised
	linear models.
	
	\subsection{Linear models} \label{sec:high_lin}
	One of the most popular methods for testing the significance of predictors
	in high-dimensional regression problems is the so-called debiased Lasso
	\citep{Zhang2013}. We begin by discussing this approach, in order to motivate our DEF methodology.
	
	The debiased Lasso works as follows: first we form estimates $(\hat{\theta}, \check{\beta}^Y)$ through a Lasso regression of $\mb Y$ on $(\mb X, \mb Z)$, and also conduct a Lasso regression of $\mb X$ on $\mb Z$ to give a coefficient estimate $\hat{\beta}^X$. There are a variety of choices of tuning parameters for each of these regressions; to ensure that these tuning parameters do not depend on the noise variances of the respective regressions, we may use a particular parametrisation of the Lasso known as the square-root Lasso regressions \citep{Belloni2011, sun2012scaled}:
	\begin{align}
		(\hat{\theta}, \check{\beta}^Y) &:= \argmin_{(t, \beta) \in \R^{1 + p}} \{\|\mb Y - \mb X t - \mb Z \beta\|_2/\sqrt{n} + \lambda_Y\|\beta\|_1\},\label{eq:DB_lasso}\\
		\hat{\beta}^X &:= \argmin_{\beta \in \R^p} \{\|\mb X - \mb Z \beta\|_2/\sqrt{n} + \lambda_X \|\beta\|_1\}. \notag
	\end{align}
	Here we may take $\lambda_X=\lambda_Y = A\sqrt{2\log(p) / n}$ for $A>1$. Note that we have denoted the estimate of the coefficient vector for $X$ as $\check{\beta}^Y$ in order to distinguish it from $\hat{\beta}^Y$ introduced in \eqref{eq:beta_hat_DEF} below.
	The square-root Lasso may be computed easily using standard software that computes regular Lasso solutions: see Section~\ref{sec:square-root_Lasso} in the appendix.
	
	We then construct a test statistic $T_{\DB}$ for assessing the conditional independence $\mb X \ci \mb Y \,|\, \mb Z$ as follows:
	\begin{equation*}
		T_{\DB} := \sqrt{n} \frac{(\mb Y - \mb Z\check{\beta}^Y)^T(\mb X - \mb Z \hat{\beta}^X)}{\|\mb Y  - \hat{\theta} \mb X - \mb Z \check{\beta}^Y\|_2 \|\mb X  - \mb Z \hat{\beta}^X\|_2}.
	\end{equation*} 
	When the $Y$-model is a sparse linear model so $\mb Y = \mb Z \beta^Y + \mbb\varepsilon$ with $\beta^Y$ sparse and $\mbb\varepsilon \sim \mathcal{N}_n(\mb 0, \sigma^2 \mb I)$, we have that $T \indist \mathcal{N}(0, 1)$ as we now outline.
	Let us write
	\begin{align*}
		\mb R &:= \mb X - \mb Z\hat{\beta}^X, \\
		\hat{\sigma} & := \|\mb Y  - \hat{\theta} \mb X - \mb Z \check{\beta}^Y\|_2 / \sqrt{n}.
	\end{align*}
	A consequence of the stationarity conditions (the so-called KKT conditions) for the optimisation problem defining $\hat{\beta}^X$ is that, provided $\mb R \neq \mb 0$,
	\begin{equation} \label{eq:KKT_X}
		\frac{1}{\sqrt{n}} \|\mb Z^T \mb R \|_\infty / \|\mb R\|_2 \leq \lambda_X.
	\end{equation}
	We may thus decompose $T_{\DB}$ as follows:
	\begin{equation*}
		T_{\DB} = \frac{1}{\hat{\sigma}}\frac{\mb R^T}{\|\mb R\|_2} \mbb\varepsilon  + \frac{1}{\hat{\sigma}} (\beta^Y-\check{\beta}^Y)^T\mb Z^T \frac{\mb R}{\|\mb R\|_2}=: \text{(i)}+\text{(ii)}.
	\end{equation*}
	Conditioning on $\mb R$, $\mb R^T \mbb\varepsilon /\|\mb R\|_2$ is a weighted sum of the independent and identically distributed $\varepsilon_i$, and thus will have an asymptotic Gaussian distribution under weak conditions on $\mb R$; in fact if the $\varepsilon_i$ are Gaussian themselves we will have $\mb R^T \mbb\varepsilon /\|\mb R\|_2 \,|\, \mb R \sim \mathcal{N}(0, \sigma^2)$ exactly, and of course the unconditional distribution will hence also be Gaussian. If $\hat{\sigma} \inprob \sigma$, then by Slutsky's Lemma we will have that (i) converges in distribution to a standard normal. In order to guarantee this, we may appeal to known results about the square-root Lasso \citep{sun2012scaled}. These rest on a compatibility factor $\phi^2$ \citep{BuhlmannGeer2009} being bounded away from zero:
	\begin{equation} \label{eq:compat}
		\phi^2 := \inf_{\substack{(t, \beta)\in \R^{1+p} \\ |t|+\|\beta_{S_Y^c}\|_1 \leq 3\|\beta_{S_Y}\|_1 \neq 0}} \frac{\|\mb X t + \mb Z\beta\|/n}{\|\beta_{S_Y}\|_1 / s_Y};
	\end{equation}
	here $S_Y := \{j:\beta^Y_j \neq 0\}$, $s_Y := |S_Y|$ and we have used the notation that for any vector $b \in \R^p$ and set $S \subseteq \{1,\ldots,p\}$, $b_S \in \R^{|S|}$ is the subvector of $b$ composed of those components of $b$ indexed by $S$. Roughly speaking, designs with large compatibility factors cannot have very highly correlated columns. Provided $\phi^2 \gtrsim 1$, we have $\hat{\sigma} \inprob \sigma$  and also $\|\check{\beta}^Y - \beta^Y\|_1 \lesssim s_Y \sqrt{\log(p)/n}$ with high probability, when $\lambda_Y \asymp \sqrt{\log(p)/n}$ \citep{van2016estimation}. This second property may be used to bound (ii) via
		\begin{equation} \label{eq:Holder}
			\begin{split}
							\frac{|(\beta^Y-\check{\beta}^Y)^T\mb Z^T \mb R|}{\|\mb R\|_2} & \lesssim \lambda_X s_Y \sqrt{\log(p)} \\
							& \lesssim s_Y\log(p) / \sqrt{n},
			\end{split}
		\end{equation}
		where we have used H\"older's inequality and \eqref{eq:KKT_X}.
		Thus, in an asymptotic regime where $s_Y\log(p) / \sqrt{n} \to 0$, Slutsky's Lemma gives us that $T_{\DB} \indist \mathcal{N}(0, 1)$.
		
		Note that essentially no assumptions regarding a regression model for $\mb X$ on $\mb Z$ are required here; the only purpose of the square-root Lasso regression producing $\hat{\beta}^X$ is to construct the vector of residuals $\mb R$. This latter quantity may be regarded as a version of predictor $\mb X$ modified to be almost orthogonal to the remaining covariates $\mb Z$ \eqref{eq:KKT_X} such that when normalised, the dot product with the bias term $\mb Z (\beta^Y - \check{\beta}^Y)$ is well-controlled \eqref{eq:Holder}. Although this orthogonality comes free as a by-product of the square-root Lasso, we have however tacitly assumed $\mb R \neq \mb 0$ to arrive at \eqref{eq:KKT_X}. If $\mb R=\mb 0$ (which we have yet to observe in practice) we can simply agree to accept the null of conditional independence, so this poses no problem for type I error control.
		We note that the same sort of orthogonality argument 
		may not go through for a regular Lasso estimator with tuning parameter chosen by cross-validation, for example, as control of the LHS of \eqref{eq:KKT_X} with no assumptions on the model would be very challenging. However, empirically, we have observed that the cross-validated Lasso performs similarly to the square-root Lasso here.
		
		Now consider the case where the $X$-model is a sparse linear model. Whilst we will have control of $\|\beta^X - \hat{\beta}^X\|_1$, the equivalent of \eqref{eq:KKT_X} with  residuals $\mb R$ replaced by $\mb Y - \mb Z \check{\beta}^Y$ will not hold in general. The issue is that the latter quantity is not equal to the residuals from the $Y$-regression unless $\hat{\theta}=0$. Thus the debiased Lasso is not quite DEF in that it can be sensitive to misspecification of the $Y$-model.
		
		There are several options for how to restore the DEF property in this setting, but one that is particularly simple involves enforcing that $\hat{\theta}=0$, that is setting $\hat{\beta}^Y$ to be coefficients from a regression of $\mb Y$ on $\mb Z$ rather than the augmented design $(\mb X, \mb Z)$:
		\begin{equation} \label{eq:beta_hat_DEF}
			\hat{\beta}^Y := \argmin_{b \in \R^p} \{\|\mb Y - \mb Z b\|_2/\sqrt{n} + \lambda_Y \|b\|_1\};
		\end{equation}
		note this differs from the definition in \eqref{eq:DB_lasso}.
		The resulting test statistic takes the form of a regularised partial correlation:
		\begin{equation} \label{eq:T_DEF}
			T_{\DEF} := T_{\DEF}(\mb Y, \mb X) := \sqrt{n} \frac{(\mb Y - \mb Z \hat{\beta}^Y)^T(\mb X - \mb Z \hat{\beta}^X)}{\|\mb Y - \mb Z \hat{\beta}^Y\|_2 \|\mb X - \mb Z \hat{\beta}^X\|_2};
		\end{equation}
		note the inclusion of the notation $T_{\DEF}(\mb Y, \mb X)$ making the dependence of the test statistic on $\mb Y$ and $\mb X$ is included here for use later in Section~\ref{sec:high_conf}.
		In the unlikely case that the denominator defining $T_{\DEF}$ above is zero, so one of the square-root Lasso solutions is degenerate, we will set $T_{\DEF}=0$; we have never observed this degeneracy to occur in any of the numerical experiments conducted.
		The test statistic \eqref{eq:T_DEF} above was first studied in \citet{ren2015asymptotic} in the context of Gaussian graphical model estimation where asymptotic normality was shown when both the $X$-model and $Y$-model are sparse. The work of \citet{shah2018goodness} extended this result to show that the same conclusion holds when only the $Y$-model holds, and hence by symmetry of the test statistic, that it has the DEF property.
		Below we state a variant of the latter result that allows for non-Gaussian errors.
		
		In the case that (only) the $Y$-model holds, we will need to assume in addition to (Y1) and (Y2) with $\mb R = \mb X - \mb Z \hat{\beta}^X$, the following conditions.
		\begin{itemize}
			\item[(Y3)] Defining $S_Y := \{j : \beta^Y_j \neq 0\}$ and $s_Y := |S_Y|$, we have $s_Y\log(p) / \sqrt{n} \to 0$.
			\item [(Y4)] $\|\hat{\beta}^Y - \beta^Y\|_1 = O_{\pr}(s_Y\sqrt{\log(p)/n})$.
			\item[(Y5)] $\|\mb Y - \mb Z \hat{\beta}^Y\|_2^2/n \inprob \sigma^2$.
		\end{itemize}
		Note that, as in the low-dimensional case, the only assumption placed on the conditional distribution of
		$\mb X$ given $\mb Z$ is (Y2), with $\mb R = \mb X - \mb Z \hat{\beta}^X$. This would be satisfied if we had a sparse
		linear $X$-model, but such an assumption is very far from necessary in
		order for (Y2) to hold. Furthermore, as shown in \citet{shah2018goodness},
		this is not necessary when the errors $\mbb\varepsilon$ for the $Y$-model
		are Gaussian. We also introduce, in addition to (X1) and (X2) with $\mb R = \mb Y - \mb Z \hat{\beta}^Y$, the following assumptions that are relevant when the $X$-model holds.
		\begin{itemize}
			\item[(X$j$)] As (Y$j$) above, but with $X$ and $\mb X$
			interchanged with $Y$ and $\mb Y$ everywhere, for $j \in \{3, 4, 5\}$.
		\end{itemize}
		We have the following result.
		\begin{thm} \label{thm:DEF_lin}
			Let $\lambda_X=\lambda_Y = A\sqrt{2 \log(p)/n}$ for some $A > 1$.
			Assume that either (Y1)--(Y5) or (X1)--(X5) hold. Then under the null hypothesis that $\mb X \ci \mb Y \,|\, \mb Z$, test statistic $T_{\DEF}$ defined according to \eqref{eq:T_DEF} satisfies $T_{\DEF} \indist \mathcal{N}(0, 1)$.
		\end{thm}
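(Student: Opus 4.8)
The plan is to reduce to one case by symmetry and then decompose $T_{\DEF}$ into a conditionally Gaussian leading term plus a negligible bias term, in the spirit of the heuristic given above for $T_{\DB}$. Since $\hat\beta^Y$ and $\hat\beta^X$ solve the same square-root Lasso problem with $\mb Y$ and $\mb X$ interchanged (recall $\lambda_Y=\lambda_X$), and the numerator and denominator of \eqref{eq:T_DEF} are symmetric in $(\mb Y - \mb Z\hat\beta^Y,\ \mb X - \mb Z\hat\beta^X)$, we have $T_{\DEF}(\mb Y,\mb X) = T_{\DEF}(\mb X,\mb Y)$; the null hypothesis is symmetric too, so it suffices to treat the case in which (Y1) and (Y3)--(Y6) hold. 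Write $\mb Y = \mb Z\beta^Y + \mbb\varepsilon$ as in (Y1). Because $\mbb\varepsilon$ is a measurable function of $(\mb Y,\mb Z)$ and $\mb Y \ci \mb X \mid \mb Z$, we have $\mbb\varepsilon \ci \mb X \mid \mb Z$, so conditionally on $(\mb X,\mb Z)$ the $\varepsilon_i$ are still independent with $\E(\varepsilon_i \mid \mb X,\mb Z)=0$, $\E(\varepsilon_i^2 \mid \mb X,\mb Z)=\sigma^2$ and $\E(|\varepsilon_i|^3 \mid \mb X,\mb Z) \le M$. Setting $\mb R := \mb X - \mb Z\hat\beta^X$ and $\hat\sigma_Y := \|\mb Y - \mb Z\hat\beta^Y\|_2/\sqrt n$, and substituting $\mb Y - \mb Z\hat\beta^Y = \mbb\varepsilon + \mb Z(\beta^Y - \hat\beta^Y)$ into \eqref{eq:T_DEF}, on the event $\{\mb R \ne \mb 0,\ \hat\sigma_Y > 0\}$ one obtains
\begin{equation*}
T_{\DEF} = \frac{1}{\hat\sigma_Y}\frac{\mb R^T\mbb\varepsilon}{\|\mb R\|_2} + \frac{1}{\hat\sigma_Y}\frac{(\beta^Y-\hat\beta^Y)^T\mb Z^T\mb R}{\|\mb R\|_2} =: \mathrm{(i)} + \mathrm{(ii)};
\end{equation*}
by (Y6) ($\pr(\mb R = \mb 0)\to 0$) and (Y5) ($\hat\sigma_Y^2 \inprob \sigma^2>0$) this event has probability tending to one, and $T_{\DEF}$ is set to $0$ on its complement, so neither affects the distributional limit.

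For term (i), I would condition on $(\mb X,\mb Z)$, so that $\mb R$ is fixed: then $W_n := \mb R^T\mbb\varepsilon/(\sigma\|\mb R\|_2)$ is a sum of independent mean-zero random variables with conditional variance $\sum_i R_i^2/\|\mb R\|_2^2 = 1$, and the Berry--Esseen theorem bounds $\sup_t|\pr(W_n \le t \mid \mb X,\mb Z) - \Phi(t)|$ by a universal constant times $M\sigma^{-3}\sum_i|R_i|^3/(\sum_i R_i^2)^{3/2}$. This ratio equals $n^{-1/2}\big(\tfrac1n\sum_i|R_i|^3\big)\big/\big(\tfrac1n\sum_i R_i^2\big)^{3/2}$, so by (Y6), i.e.\ \eqref{eq:R}, its expectation on $\{\mb R \ne \mb 0\}$ tends to $0$; hence the Berry--Esseen bound tends to $0$ in $L^1$, and taking expectations of the conditional distribution functions gives $W_n \indist \mathcal{N}(0,1)$ unconditionally. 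Combining with $\hat\sigma_Y \inprob \sigma$ from (Y5) and Slutsky's lemma yields $\mathrm{(i)} = (\sigma/\hat\sigma_Y)W_n \indist \mathcal{N}(0,1)$.

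For term (ii), Hölder's inequality and the square-root Lasso KKT bound \eqref{eq:KKT_X} give
\begin{equation*}
\frac{|(\beta^Y-\hat\beta^Y)^T\mb Z^T\mb R|}{\|\mb R\|_2} \le \|\hat\beta^Y - \beta^Y\|_1\,\frac{\|\mb Z^T\mb R\|_\infty}{\|\mb R\|_2} \le \sqrt n\,\lambda_X\,\|\hat\beta^Y - \beta^Y\|_1 .
\end{equation*}
Since $\lambda_X \asymp \sqrt{\log(p)/n}$, assumption (Y4) makes the right-hand side at most a constant multiple of $s_Y\log(p)/\sqrt n$ with probability tending to one, which vanishes by (Y3); as $\hat\sigma_Y$ is bounded away from $0$ with probability tending to one, $\mathrm{(ii)} \inprob 0$. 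A final application of Slutsky's lemma to $T_{\DEF} = \mathrm{(i)} + \mathrm{(ii)}$ completes the proof.

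The step I expect to require the most care is the passage from the conditional Berry--Esseen estimate to unconditional distributional convergence of (i): one must argue that an $L^1$ (hence in-probability) bound on the random Lyapunov ratio, together with $\pr(\mb R = \mb 0) \to 0$, really does yield $W_n \indist \mathcal{N}(0,1)$ — for instance via dominated convergence applied to $t \mapsto \E\{\pr(W_n \le t \mid \mb X,\mb Z)\}$ or a subsequence argument. The remaining manipulations — the decomposition, Hölder's inequality combined with the KKT conditions, and invoking the high-level rate assumptions (Y3)--(Y5) — are routine bookkeeping.
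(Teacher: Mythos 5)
Your proposal is correct and takes essentially the same route as the paper: the same symmetry reduction, the same decomposition into a conditionally centred term plus a bias term controlled by H\"older's inequality and the KKT bound \eqref{eq:KKT_X}, with (Y3)--(Y5) invoked exactly as you do. The passage from the conditional Berry--Esseen estimate to unconditional convergence that you flag as the delicate step is precisely what the paper isolates as Lemma~\ref{lem:CLT}, and your sketch of it (taking expectations of the conditional distribution functions and absorbing $\pr(\mb R = \mb 0) \to 0$) matches that lemma's proof.
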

		Similarly to the case with the debiased Lasso, under an alternative where $\mb Y = \mb X \theta + \mb Z \beta^Y + \mbb\varepsilon$, if a sparse linear $X$-model also holds, $T_{\DEF}$ \eqref{eq:T_DEF} has power tending to 1 when $\sqrt{n} \theta \to \infty$. We refer the reader to \citet{ren2015asymptotic} and \citet{shah2018goodness} for further details.
		
		The DEF version of the debiased Lasso bears some similarities to the CorrT test developed and studied in \citet{zhu2018significance}. However whereas the latter relies on estimating $\beta^Y$ and $\beta^X$ via a family of linear programs, the DEF statistic presented here can be calculated using standard software for computing Lasso solutions such as \texttt{glmnet} \citep{friedman2010regularization}. 
		We note further that whereas Theorem~\ref{thm:DEF_lin} only requires the weak condition that no residual from the regression relating to the misspecified is too extreme (and no condition on the residuals when the errors in the true model are Gaussian), the corresponding result (Theorem 2) in \citet{zhu2018significance} requires the misspecified model to nevertheless be a linear model with the coefficient vector having bounded $\ell_2$-norm. Furthermore the sparsity condition $s = o(\sqrt{n} /  (\log p)^{5/2})$ is assumed, where $s$ is the sparsity of the coefficient vector in the well specified model, compared to our requirement of $s = o(\sqrt{n} / \log p)$. On the other hand, the CorrT test accommodates heteroscedastic errors whereas one would need to modify our statistic to
		\[
		\sqrt{n}\frac{\frac{1}{n}(\mb R^Y)^T \mb R^X}{\frac{1}{n}\sum_{i=1}^n (\mb R^Y_i)^2 (\mb R^X_i)^2 - \left(\frac{1}{n}(\mb R^Y)^T \mb R^X\right)^2 },
		\]
		where
		\[
		\mb R^Y := \mb Y - \mb Z \hat{\beta}^Y \qquad \text{and} \qquad \mb R^X := \mb X - \mb Z \hat{\beta}^X
		\]
		in order to achieve this; see \citet{shah2018hardness} which uses the denominator above more generally in nonparametric models.
		\subsection{Confidence intervals via inverting tests} \label{sec:high_conf}
		Thus far we have only discussed testing, but using the DEF statistic \eqref{eq:T_DEF}, it is straightforward to obtain confidence intervals for a parameter $\theta$ in the partially linear model
		\begin{equation} \label{eq:par_lin}
			\mb Y = \mb X \theta + f(\mb Z, \mbb\varepsilon)
		\end{equation}
		where $\mbb\varepsilon \ci \mb X \,|\, \mb Z$ and $f : \R^{n \times p} \times \R^n \to \R^n$ under the following conditions:
		either $f(\mb Z, \mbb\varepsilon) = \mb Z \beta^Y + \mbb\varepsilon$, or a sparse linear $X$-model holds.
		Our approach for constructing a confidence region for $\theta$ utilises the well-known duality between confidence intervals and hypothesis tests; specifically we invert the DEF test, noting that under \eqref{eq:par_lin}, we have $\mb Y - \mb X\theta \ci \mb X \,|\, \mb Z$. We first compute test statistic
		\begin{equation} \label{eq:confint}
			T_{\DEF,t} := T_{\DEF}(\mb Y - \mb X t, \mb X),
		\end{equation}
		that is we subtract $t$ times $\mb X$ from $\mb Y$ and compute the usual DEF test statistic. Then we form a $1-\alpha$ confidence region $R_\alpha$ via
		\[
		R_\alpha := \{t \in \R : |T_{\DEF,t}| \geq z_{\alpha}\} 
		\]
		where $z_\alpha$ is the upper $\alpha/2$ quantile of a standard normal distribution. As a consequence of Theorem~\ref{thm:DEF_lin} This confidence region has the following asymptotic validity.
		\begin{cor} \label{cor:conf}
			Suppose the partially linear model \eqref{eq:par_lin} holds with $\mbb\varepsilon \ci \mb X \,|\, \mb Z$ and let $\lambda_X=\lambda_Y = A\sqrt{2 \log(p)/n}$ for some $A > 1$. Suppose the assumptions of Theorem~\ref{thm:DEF_lin} hold with $\mb Y$ replaced by $\mb Y - \mb X\theta$, i.e., in particular either $f(\mb Z, \mbb\varepsilon) = \mb Z \beta^Y + \mbb\varepsilon$, or a sparse linear $X$-model holds. Then for any $\alpha \in (0, 1)$,
			\[
			\pr(\theta \in R_\alpha) = \pr(|T_{\DEF,\theta}|\geq z_{\alpha}) \to 1-\alpha.
			\]
		\end{cor}
		Interestingly, in the case where the $X$-model holds, $f$ can be a fairly exotic function such that different components of $f(\mb Z, \mbb\varepsilon) \in \R^n$ are dependent, provided (X6) holds. Figure~\ref{fig:confint} illustrates our construction.
		
		Rather than directly seeking for an estimate of $\theta$, by inverting hypothesis tests, we do not rely on being able to distinguish the contribution of $\mb X$ from among the remaining covariates $\mb Z$. Thus for example having $\mb X$ very highly correlated with $\mb Z$ would not interfere with coverage properties of the intervals.
		
		Of course, computing $T_{\DEF,t}$ for all $t \in \R$ is not feasible. However, whilst $R_\alpha$ is not guaranteed to be an interval in general, it appears to be the case in practice and we have yet to find a counterexample. This observation allows us to find the end points of the interval via a bisection search. We use coordinate descent to solve the square-root Lasso programmes involved in computing the test statistics $T_{\DEF,t}$, and warm start this iterative optimisation procedure at the closest point computed in the search. Whilst this construction is computationally more intensive than the standard approach with the debiased Lasso, it is still feasible in large-scale settings.  For the example shown in Figure~\ref{fig:confint}, the computation of the $500$ confidence intervals taking each columns of $\mb Z$ as the variable of interested (i.e.\ treating it as $\mb X$) took under 6 seconds on a standard laptop; this time could be further reduced by performing computations in parallel.

		\begin{figure}
			\centering
			\makebox{\includegraphics[width=0.45\textwidth]{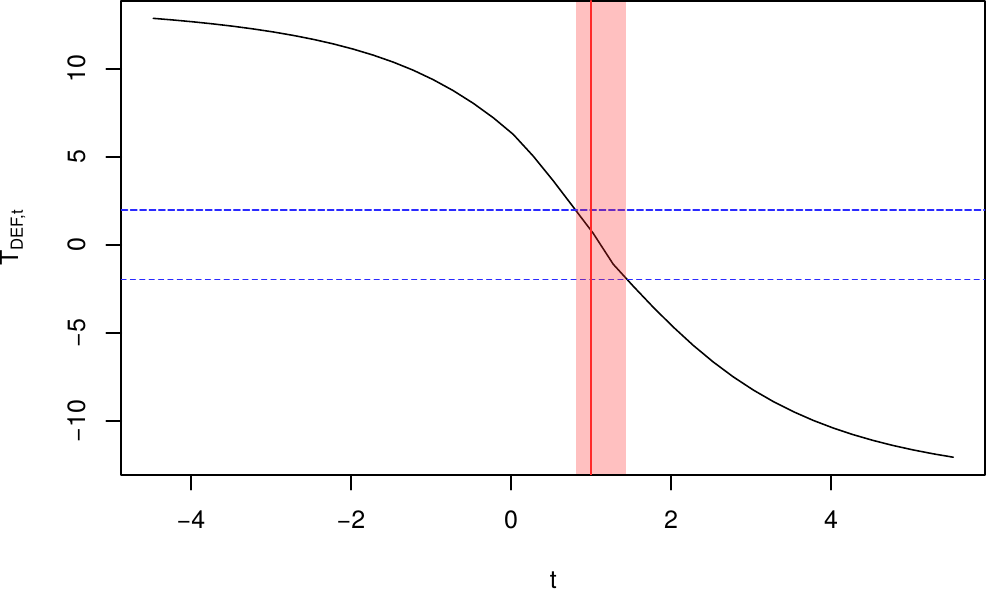}}
			\caption{Illustration of confidence interval construction. We generated $(\mb X, \mb Z) \in \R^{n \times p}$ with independent rows distributed as $\mathcal{N}_p(0, \Sigma)$ with $\Sigma_{jk} = 0.9^{|j-k|}$ where $(n, p) = (200, 500)$. A response $\mb Y$ was generated through $Y_i = X_i -0.5 Z_{i1} + 0.7 Z_{i2} + \varepsilon_i$ where $\mbb\varepsilon \sim \mathcal{N}_n(0, I)$. The plot shows $T_{\DEF,t}$ \eqref{eq:confint} as a function of $t$ (black curve). Horizontal dotted blue lines lie at $\pm  z_{0.05}$ and the shaded red region enclosing the intersection points with the curve $(t, T_{\DEF,t})$ depicts the 95\% confidence interval; here this contains the true parameter $\theta=1$.\label{fig:confint}}
		\end{figure}
		
		In Section~\ref{sec:pred_int} of the appendix we show how a similar technique to that described above can be used to construct confidence intervals for $w^T\beta^Y$ for some $w \in \R^p$ that is potentially dense, when the $Y$-model is a sparse linear model. This is perhaps most useful when $w$ is an additional covariate vector for a new observation whose corresponding response has not been observed; we can thus provide a confidence interval for the mean response conditional on the observed vector of covariates.
		
		\subsection{Generalised linear models} \label{sec:high_gen}
		We have seen in Section~\ref{sec:high_lin} how one can modify the debiased Lasso to construct a test statistic that has similar sorts of DEF properties to that enjoyed by the standard $t$-statistic in the low-dimensional setting. In Section~\ref{sec:low_gen} we saw how standard inference for generalised linear models has a DEF property, albeit with a slight modification needed to account for the different variances of the test statistics when the $Y$-model is misspecified. It is natural to ask whether   inferential procedures for high-dimensional generalised linear models can be adapted to be DEF, but one could equally ask the broader question of whether we can specify sparse generalised linear $X$ and $Y$-models (possibly different for each), and obtain valid inference if at most one of these is misspecified: this is the question we attempt to address here. As a first step in this direction, we consider heteroscedastic linear models, and then move on to treat generalised linear models in Section~\ref{sec:genmod}.
		
		\subsubsection{Heteroscedastic linear models} \label{sec:hetero}
		Consider the model $Y_i = Z_i^T \beta^Y +\zeta_i$ where $\E(\zeta_i\,|\, \mb Z)=0$, $\Var(\zeta_i \,|\, \mb Z) = \sigma_Y^2 / (D^Y_{ii})^2$ and the $\zeta_i$ are independent conditional on $\mb Z$; and a similar $X$-model. Equivalently, we may write
		\begin{align}
			\mb D^Y \mb Y &= \mb D^Y \mb Z \Lambda^Y \beta^Y +  \mbb\varepsilon^Y \label{eq:hetero_Y},\\
			\mb D^X \mb X &= \mb D^X \mb Z \Lambda^X \beta^X + \mbb\varepsilon^X \label{eq:hetero_X}
		\end{align}
		for the $Y$ and $X$-models respectively, where $\Var(\varepsilon^Y_i) =
		\sigma_Y^2$, $\Var(\varepsilon^X_i) = \sigma_X^2$ and the diagonal matrices
		$\Lambda^Y, \Lambda^X \in \R^{p\times p}$ are such that the empirical
		variances of the columns of the  resulting design matrices $\mb D^Y \mb Z
		\Lambda^Y$ and $\mb D^X \mb Z \Lambda^X$ are $1$. Note we have redefined
		$\beta^Y$ and $\beta^X$ by scaling them by $\Lambda^Y$ and $\Lambda^X$
		respectively. We will treat the diagonal matrices $\mb D^Y$ and $\mb D^X$
		as known, though one of \eqref{eq:hetero_Y} and \eqref{eq:hetero_X} may be
		misspecified, in which case the corresponding matrix will be
		meaningless.
		In this context, it seems natural to seek an analogue of the test statistic $T_{\DEF}$ based on the weighted square-root Lasso regressions
		\begin{align*}
			\hat{\beta}^Y &= \argmin_{ b \in \R^{p}} \{\|\mb D^Y (\mb Y -\mb Z \Lambda^Y b)\|_2/\sqrt{n} + \lambda \| b\|_1 \}, \\
			\hat{\beta}^X &= \argmin_{ b \in \R^{p}} \{\|\mb D^X(\mb X -\mb Z \Lambda^X b)\|_2/\sqrt{n} + \lambda\| b\|_1 \}.
		\end{align*}
		The KKT conditions of the above optimisations are however not ``compatible'' in the same way as allowed for arguments similar to \eqref{eq:Holder}; the issue is that the design matrices in \eqref{eq:hetero_X} and \eqref{eq:hetero_Y} are different so Theorem~\ref{thm:DEF_lin} does not directly apply. Thus we cannot conclude that the bias term is small unless, for example, both the $X$ and $Y$-models specified above hold. Instead, consider orthogonalising the residuals $\tilde{\mb Y}:=\mb Y - \mb Z \Lambda^Y \hat{\beta}^Y$ and $\tilde{\mb X}:=\mb X - \mb Z \Lambda^X \hat{\beta}^X$ from the regressions above using the following construction:
		\begin{align}
			(\tilde{\beta}^Y, \tilde{\eta}^Y) &= \argmin_{( b, u) \in \R^{p} \times \R^{p}} \{ \|\mb D^Y(\tilde{\mb Y} - \mb Z \Lambda^Y b) \notag \\
			& \;\;\;\; - \mb D^X \mb Z  \Lambda^X u)\|_2/\sqrt{n} + \lambda(\| b\|_1 + \|u\|_1)\} \label{eq:orthog_Y}\\
			(\tilde{\beta}^X, \tilde{\eta}^X) &= \argmin_{( b,  u) \in \R^{p} \times \R^{p}} \{ \|\mb D^X(\tilde{\mb X} - \mb Z \Lambda^X b) \notag \\
			&\;\;\;\;- \mb D^Y \mb Z  \Lambda^Y u)\|_2/\sqrt{n} + \lambda(\| b\|_1 + \| u\|_1)\}. \label{eq:orthog_X}
		\end{align}
		Here we have augmented the designs with the terms $\mb D^X \mb Z$ and $\mb D^Y \mb Z$. The only purpose of these terms and the corresponding estimates $\tilde{\eta}^Y$ and $\tilde{\eta}^X$ is to ensure that the residuals from the regressions above satisfy the required near-orthogonality properties for controlling the bias term.

		Consider now the case that the $Y$-model \eqref{eq:hetero_Y} is well-specified.
		Let $\mb R^X := \mb D^X (\tilde{\mb X} -\mb Z \Lambda^X\tilde{\beta}^X) - \mb D^Y \mb Z \Lambda^Y \tilde{\eta}^X$.
		The KKT conditions for \eqref{eq:orthog_X} yield in particular that
		\begin{equation} \label{eq:KKT_weighted}
			\begin{split}
				\frac{1}{\sqrt{n}}\frac{\|\Lambda^X \mb Z^T \mb D^X \mb R^X\|_\infty}{\|\mb R^X\|_2} &\leq \lambda \\
				\frac{1}{\sqrt{n}}\frac{\|\Lambda^Y \mb Z^T \mb D^Y \mb R^X\|_\infty}{\|\mb R^X\|_2} &\leq \lambda \, ;
			\end{split}
		\end{equation}	
		note the second inequality is due to the additional $\mb D^Y \mb Z\Lambda^Y$ term included in \eqref{eq:orthog_X}. Let us also define $\mb R^Y$ to be the equivalent of $\mb R^X$, but with $X$ and $\mb X$ interchanged everywhere with $Y$ and $\mb Y$ respectively. With these we may define a weighted version of the test statistic $T_{\DEF}$ which is simply a scaled correlation between the weighted residuals $\mb R^X$ and $\mb R^Y$:
		\begin{equation*}
			T_{\WDEF} := \sqrt{n} \frac{(\mb R^X)^T \mb R^Y }{\|\mb R^X\|_2 \|\mb R^Y\|_2}.
		\end{equation*}
		Similar to the homoscedastic case, we set $T_{\WDEF}=0$ if the denominator above is zero.
		We now explain why we will typically have $T_{\WDEF} \indist \mathcal{N}(0, 1)$ if the $Y$-regression holds, and hence also by symmetry, if the $X$-regression holds. Let us write $\hat{\sigma} := \|\mb R^Y\|_2/\sqrt{n}$. Now
		\begin{equation} \label{eq:weighted_bias}
			\mb R^Y = \mbb\varepsilon^Y +\mb D^Y \{\mb Z \Lambda^Y (\beta^Y - \hat{\beta}^Y - \tilde{\beta}^Y)\} - \mb D^X\mb Z\Lambda^X\tilde{\eta}^Y.
		\end{equation}
		Thus we have
		\begin{align*}
			\hat{\sigma} T_{\WDEF} &=  (\mbb\varepsilon^Y)^T \frac{\mb R^X}{\|\mb R^X\|_2}   \\
			&\;\;\;\;+ (\beta^Y - \hat{\beta}^Y-\tilde{\beta}^Y)^T\Lambda^Y \mb Z^T \mb D^Y \frac{\mb R^X}{\|\mb R^X\|_2} \\
			&\;\;\;\;- (\tilde{\eta}^Y)^T \Lambda^X \mb Z^T \mb D^X \frac{\mb R^X}{\|\mb R^X\|_2}\\
			&=: \text{(i)}+\text{(ii)}+\text{(iii)}.
		\end{align*}
		Under weak conditions, the first term (i) will converge in distribution to a normal distribution. The two sets of near-orthogonality conditions \eqref{eq:KKT_weighted} in conjunction with H\"older's inequality give that the two bias terms above satisfy
		\begin{align*}
		|\text{(ii)}| &\leq \sqrt{n}\lambda(\|\beta^Y - \hat{\beta}^Y\|_1 + \|\tilde{\beta}\|_1), \\
		|\text{(iii)}| &\leq \sqrt{n} \lambda \|\hat{\eta}^Y\|_1
		\end{align*}
		respectively. As explained in Section~\ref{sec:high_lin}, we can expect that under reasonable conditions we have $\|\beta^Y - \hat{\beta}^Y\|_1 \lesssim s_Y \sqrt{\log(p)/n}$ with high probability. The additional terms $\|\tilde{\beta}^Y\|_1$ and $\|\tilde{\eta}^Y\|_1$ may be controlled similarly to $\|\beta^Y - \hat{\beta}^Y\|_1$; see Theorem~\ref{thm:l_1_bd} below.
		
		Throughout the discussion above, we have assumed that the $Y$-model holds. If instead the $X$-model is correct, the symmetry of the test statistic allows that analogous results may be established in the same manner, justifying that $T_{\WDEF}$ has a standard normal distribution under the null-hypothesis if either model is well-specified. This is formalised in the result below, which assumes some additional moment conditions for the entries in $\mb Z$, and a condition on the growth rate of $p$ compared to $n$.
		\begin{thm} \label{thm:l_1_bd}
			Suppose there exist constants $M, \delta > 0$ such that
			\[
			\pr\left(\frac{1}{n}\sum_{i=1}^n (|D^Y_{ii}Z_{ij}\Lambda^Y_{ii}|^{2+\delta} + |D^X_{ii}Z_{ij}\Lambda^X_{ii}|^{2+\delta}) \leq M\right)\to 1,
			\]
			and $p \leq n^{c \delta}$ for some $c \in (0,1)$ and all $n$ sufficiently large.
			Suppose that (Y1) holds with the heteroscedastic $Y$-model \eqref{eq:hetero_Y} in place of the linear model and $\delta$ as above, (Y2) holds with $\mb R = \mb R^X$,  and (Y3)--(Y5) hold.
			Suppose $\lambda = A\sqrt{2\log(p)/n}$ for some $A>1$. Then there exists a constant $C>0$ such that
			\begin{equation} \label{eq:l_1_control}
				\pr\left(\|\tilde{\beta}^Y\|_1 + \|\tilde{\eta}^Y\|_1 \leq C \|\beta^Y - \hat{\beta}^Y\|_1 \right) \to 1,
			\end{equation}
			and moreover, under the null hypothesis that $\mb X \ci \mb Y \,|\, \mb Z$, we have $T_{\WDEF} \indist \mathcal{N}(0, 1)$.
		\end{thm}
		By symmetry, an analogous version of the result holds with every instance of $Y$ and $\mb Y$ interchanged with $X$ and $\mb X$ respectively.
		\subsubsection{Generalised linear models} \label{sec:genmod}
		With the methodology for heteroscedastic linear models introduced above, we can now set out a DEF test statistic for the case where we wish to specify the $X$ and $Y$-models as generalised linear models.
		The first step is to run penalised generalised linear regressions of each of $\mb Y$ and $\mb X$ on $\mb Z$ to obtain coefficient estimates $\hat{\beta}^Y, \hat{\beta}^X \in \R^p$.
		Let $\mu_X$ and $\mu_Y$ be the respective mean functions (i.e.\ inverse link functions) so that if the $Y$-model is well-specified and $\mb Y \ci \mb X \,|\, \mb Z$, we have $\E(Y_i \,|\, Z_i) = \mu_Y(Z_i^T\beta^Y)$ where $\beta^Y \in \R^p$. Further define variance functions $V_{Y,i}$ for the $Y$-model; when the $Y$-model holds we will have $V_{Y,i}(\mu_Y(Z_i^T\beta^Y)) = \Var(Y_i\,|\,Z_i)$. We will assume for simplicity that the $V_{Y,i}$ are known and do not vary over the observations, so we may write $V_Y=V_{Y,i}$.
		Define the variance function $V_X$ for the $X$-model analogously.
		
		To compute a DEF test statistic for generalised linear models, we take the following steps.
		\begin{enumerate}
			\item Define the adjusted response $\tilde{\mb Y} \in \R^n$ by
			\[
			\tilde{Y}_i := \frac{Y_i - \mu_Y(Z_i^T\hat{\beta}^Y)}{\mu_Y'(Z_i^T\hat{\beta}^Y)}
			\]
			and define $\tilde{\mb X}$ analogously.
			\item Define diagonal matrix $\hat{\mb D}^Y \in \R^{n \times n}$ by $\hat{D}^Y_{ii} = \mu_Y'(Z_i^T\hat{\beta}^Y) \{V_Y(\mu_Y(Z_i^T\hat{\beta}^Y))\}^{-1/2}$, and define $\hat{\mb D}^X$ analogously.
			\item Compute test statistic $T_{\GENDEF}$ by forming $T_{\WDEF}$ but replacing $\mb X$ and $\mb Y$ with their adjusted versions $\tilde{\mb X}$ and $\tilde{\mb Y}$, and using the diagonal matrices $\hat{\mb D}^X$ and $\hat{\mb D}^Y$ defined above. 
		\end{enumerate}
		
		We now explain why we can expect that $T_{\GENDEF} \indist \mathcal{N}(0, 1)$ when $\mb X \ci \mb Y \,|\, \mb Z$ and either the $Y$-model or $X$-model is well-specified. Suppose that the $Y$-model holds. Then a first order Taylor expansion yields
		\begin{align*}
			Y_i - \mu_Y(Z_i^T\hat{\beta}^Y) &= \mu_Y(Z_i^T\beta^Y)-\mu_Y(Z_i^T\hat{\beta}^Y) + \zeta_i \\
			&\approx Z_i^T(\beta^Y - \hat{\beta}^Y)\mu_Y'(Z_i^T\hat{\beta}^Y) + \zeta_i
		\end{align*}
		where $\E(\zeta_i\,|\,Z_i)=0$ and $\Var(\zeta_i\,|\,Z_i) =V_Y(\mu_Y(Z_i^T\beta^Y))$. Thus $\tilde{Y}_i \approx Z_i^T(\beta^Y - \hat{\beta}^Y) + \zeta_i / \mu_Y'(Z_i^T\hat{\beta}^Y)$ and hence
		\[
		\hat{\mb D}^Y \tilde{\mb Y} \approx \hat{\mb D}^Y \mb Z (\beta^Y - \hat{\beta}^Y) + \mbb\varepsilon,
		\]
		where $\E(\mbb\varepsilon\,|\, \mb Z) =\mb0$ and $\Var(\mbb\varepsilon\,|\, \mb Z)=\mb I$.
		
		Now the square-root Lasso regression involving $\tilde{\mb Y}$ used in step
		3 above should have little effect as $\tilde{\mb Y}$ is essentially noise
		(see Theorem~\ref{thm:l_1_bd}). The corresponding regression for $\tilde{\mb
			X}$ however will ensure the resulting residuals are almost orthogonal to
		the bias term $\hat{\mb D}^Y \mb Z (\beta^Y - \hat{\beta}^Y)$. Arguing
		similarly to \eqref{eq:weighted_bias}, we see that the overall bias should be well-controlled. The variance term $\mbb\varepsilon^T \mb R^X / \|\mb R^X\|_2$ should behave roughly like a weighted sum of independent zero-mean random variables $\varepsilon_i$. The fact that $\hat{\mb D}^Y$ is used in the construction of the residuals $\mb R^X$ however means they are not independent of $\mbb\varepsilon$, and one cannot directly apply a version of the central limit theorem to the term. Whilst some form of sample splitting could in principle help with this technical issue (see for example \citet{jankova2020goodness} where sample splitting is used in a similar context), as the dependence is weak, a normal approximation should work well in practice; indeed we show empirically in Section~\ref{sec:experiments} that this is the case.
		
		\subsubsection{Connections to the generalised covariance measure, the decorrelated score test and the debiased Lasso} \label{sec:score}
		An alternative to the approach for DEF inference in high-dimensional generalised linear models presented in the previous sections is based on the score test. Considering the setup of Section~\ref{sec:low_gen}, the key argument that results in the DEF property for maximum likelihood estimation in low-dimensional generalised linear models is that $\beta^\dagger$ defined as the maximiser of $\E \ell(Z^T\beta; Y)$ over $\beta \in \R^p$ satisfies
		\begin{equation} \label{eq:score_test}
			\E \{ X U(Z^T\beta^\dagger;Y)\} = \E\{(X - Z^T\beta^X) U(Z^T\beta^\dagger;Y)\}
		\end{equation}
		where $\beta^X := \argmin_{\beta \in \R^p} \E\{(X - Z^T\beta)^2\}$ is the best linear predictor of $X$ based on $Z$. It is straightforward to see that if $X \ci Y \,|\, Z$, the RHS is always zero whenever $Z^T\beta^X$ coincides with $\E (X \,|\, Z)$, and clearly the LHS (and hence also the RHS) is zero whenever the model \eqref{eq:log_lik} is well-specified.
		
		The RHS of \eqref{eq:score_test} may be used as the basis of a score-type test involving linearly regressing $\mb X$ onto $\mb Z$, and forming the empirical covariance of these residuals and  $\big(U(Z_i^T\check{\beta}^Y;Y_i)\big)_{i=1}^n$, where $\check{\beta}^Y$ is a maximum likelihood estimate of $\beta^Y$. Given that both regressions of $\mb X$ and $\mb Y$ on $\mb Z$ are performed to produce such a test statistic, it is more intuitively clear that this would have a DEF property. The $\mb X$ on $\mb Z$ regression is however redundant as the stationarity conditions of $\check{\beta}^Y$ dictate that $\big(U(Z_i^T\check{\beta}^Y;Y_i)\big)_{i=1}^n$ is orthogonal to the column space of $\mb Z$. Thus a regular score test would have the DEF property for a linear regression model of $X$ on $Z$.
		
		In high-dimensional settings  the estimate $\check{\beta}^Y$ will necessarily only yield approximate orthogonality to $\mb Z$, and so the regression of $\mb X$  on $\mb Z$ is crucial. In a setting where the regression for $Y$-model is a generalised linear model with canonical link, this leads to a test statistic of the form
		\begin{equation} \label{eq:GCM}
			\frac{1}{\hat{\tau}_D} \sum_{i=1}^n (X_i - Z_i^T\check{\beta}^X)^T\{Y_i - \mu_Y(Z_i^T\check{\beta}^Y)\},
		\end{equation}
		where $\hat{\tau}_D$ is a normalisation term that ensures an asymptotically unit variance under the null. This is the form of the generalised covariance measure (GCM) \citep{shah2018hardness}, the decorrelated score test \citep{ning2017general}, and, to a first order Taylor approximation, the debiased Lasso \citep{vandegeer2014}; however they differ primarily in their choice of estimates $\check{\beta}^X$ and $\check{\beta}^Y$. Both the GCM and the decorrelated score construct $\check{\beta}^Y$ through only regressing on $\mb Z$, similarly to our DEF approach, whereas the debiased Lasso involves a regression on $(\mb X, \mb Z)$. Like our approach, the $\mb X$ on $\mb Z$ regression in the GCM is performed without using $\mb Y$ and can be tailored to a specified $X$-model, whereas both the decorrelated score test and the debiased Lasso aim to construct $\check{\beta}^X$ so that the residuals $\mb X - \mb Z \check{\beta}^X$ are orthogonal to the bias in the residuals from $\mb Y$ regression, were the $Y$-model to be correct. Our DEF approach instead employs an orthogonalisation step using the square-root Lasso corresponding to each of $X$ and $Y$ after initial $\mb X$ and $\mb Y$ regressions have been performed. A further difference is that whereas \eqref{eq:GCM} involves an empirical covariance between raw residuals, our DEF approach uses Pearson residuals. This is so that the square-root Lasso orthogonalisation corresponding to the true model is performed on data with (approximately) homoscedastic errors, which permits \eqref{eq:l_1_control} to hold. We have however found that a version of the test with raw residuals performs very similarly in terms of power and type I error control.

		\section{Numerical experiments} \label{sec:experiments}
		In this section we explore the empirical properties of our proposed DEF methodology set out in Section~\ref{sec:high}.
		
		\subsection{Partially linear models} \label{sec:exp_lin}
		Here we investigate the empirical performance of our DEF confidence interval construction described in Section~\ref{sec:high_conf}, and compare it with the debiased Lasso.
		We consider partially linear regression models of the form
		\[
		Y_i = \theta X_i + f(Z_i, \varepsilon_i),
		\]
		where the goal is to provide a confidence interval for $\theta$. The nuisance function $f$, parameter $\theta$ and data $(Y_i, X_i, Z_i, \varepsilon_i) \in \R \times \R \times \R^p \times \R$ for $i=1,\ldots,n$ with $n=100$ are generated as follows. We use the publicly available gene expression data of Bacillus Subtilis \citep{Buehlmann2014}, which has $71$ observations and $4088$ predictors. We first select the $p+1=500$ predictors with the highest empirical variances, and then centre and scale these so the empirical variances are $1$. We then fit a Gaussian copula model to these predictors to give a $500$-dimensional multivariate distribution $P$ from which we can generate independent realisations of $(X_i, Z_i)$. This distribution is non-Gaussian and has some large pairwise correlations and thus is helpful for assessing how our methods may perform in challenging and realistic settings.
		
		To form $(X_i, Z_i)_{i=1}^n$ we first generate $(W_i)_{i=1}^n \iid P$ and then consider 12 settings taking each of the first $12$ components of $W_i$ as the variable $X_i$ of interest, and collecting the remaining components into $Z_i$. For each of the 12 settings, we generate a new $\theta \sim U[0, 2]$, and look at $3$ forms for the nuisance function $f$.
		\begin{enumerate}[(a)]
			\item \textbf{Linear.} We set
			\[
			f(Z_i, \varepsilon_i) = \sum_{j=1}^{11} Z_{ij}\beta_j + \varepsilon_i
			\]
			where the $(\beta_j)_{j=1}^{11}$ are generated independently and follow a $U[0,2]$. distribution.
			\item \textbf{Slightly nonlinear.} We set
			\[
			f(Z_i, \varepsilon_i) = \sum_{j=1}^{11} \tilde{Z}_{ij}\beta_j + \varepsilon_i
			\]
			with $(\beta_j)_{j=1}^{11}$ as in (a) and $\tilde{Z}_{ij} := 2e^{Z_{ij}}/(1+e^{Z_{ij}})-1$.
			\item \textbf{Highly nonlinear.} We first form
			\[
			\eta_i := \sum_{j=1}^{11} \tilde{Z}_{ij}\beta_j + \sum_{j=1}^{11}\sum_{k=1}^{11}
			\tilde{Z}_{ij}\tilde{Z}_{ik}\theta_{jk} + \varepsilon_i,
			\]
			where the $(\tilde{Z}_{ij})_{j=1}^{11}$ and $(\beta_j)_{j=1}^{11}$ are as above and $(\theta_{jk})_{j,k=1}^{11}$. We then set $f(Z_i, \varepsilon_i) = e^{\eta_i} / (1+e^{\eta_i})$.
		\end{enumerate} 
		In all cases the errors $(\varepsilon_i)_{i=1}^n$ are taken to be i.i.d.\ standard normal. In our implementation of the debiased Lasso and DEF confidence intervals, we use the square-root Lasso with parameters $\lambda_X$ and $\lambda_Y$ chosen according to the method of \citet{sun2013sparse}.
		Figures~\ref{fig:n1_real_lin}, \ref{fig:n1_real_nonlin1} and \ref{fig:n1_real_nonlin2} show the results. We see that the DEF 95\% confidence intervals have significantly better coverage compared to those based on the debiased Lasso. This is even true in the linear setting where one might have expected the performances to be similar, suggesting that the strategy of inverting hypothesis tests may also be useful when applied in conjunction with debiased Lasso-based tests. The improved coverage we observe is partly due to the DEF confidence intervals being wider, but they also seem to have slightly better centring around the true parameter values; in contrast the debiased Lasso confidence intervals display a substantial bias towards zero in several cases.
		
		Note that the nonlinear settings (b) and (c) do not quite satisfy the conditions for our theory (see Theorem~\ref{thm:DEF_lin}) as the non-Gaussianity of the $Z_i$ would mean that the $X$-models are unlikely to be sparse linear models. Nevertheless, the coverage is reasonable if not perfect in these more challenging settings.
		Results for analogous scenarios to those studied here but with $P$ replaced by a multivariate Gaussian with a Toeplitz covariance matrix $\Sigma$ where $\Sigma_{jk} = 0.9^{|j-k|}$ are shown in Section~\ref{sec:add_exp} of the appendix. In these settings, the $X$-model is a highly sparse linear model, and as a result the coverage properties of both methods are improved; however the debiased Lasso still undercovers whilst the DEF confidence intervals reach a coverage of closer to 95\%. We have observed a very similar pattern of results for other settings of $(n, p)$.
		\begin{figure*}
			\centering
			\includegraphics[width=\textwidth]{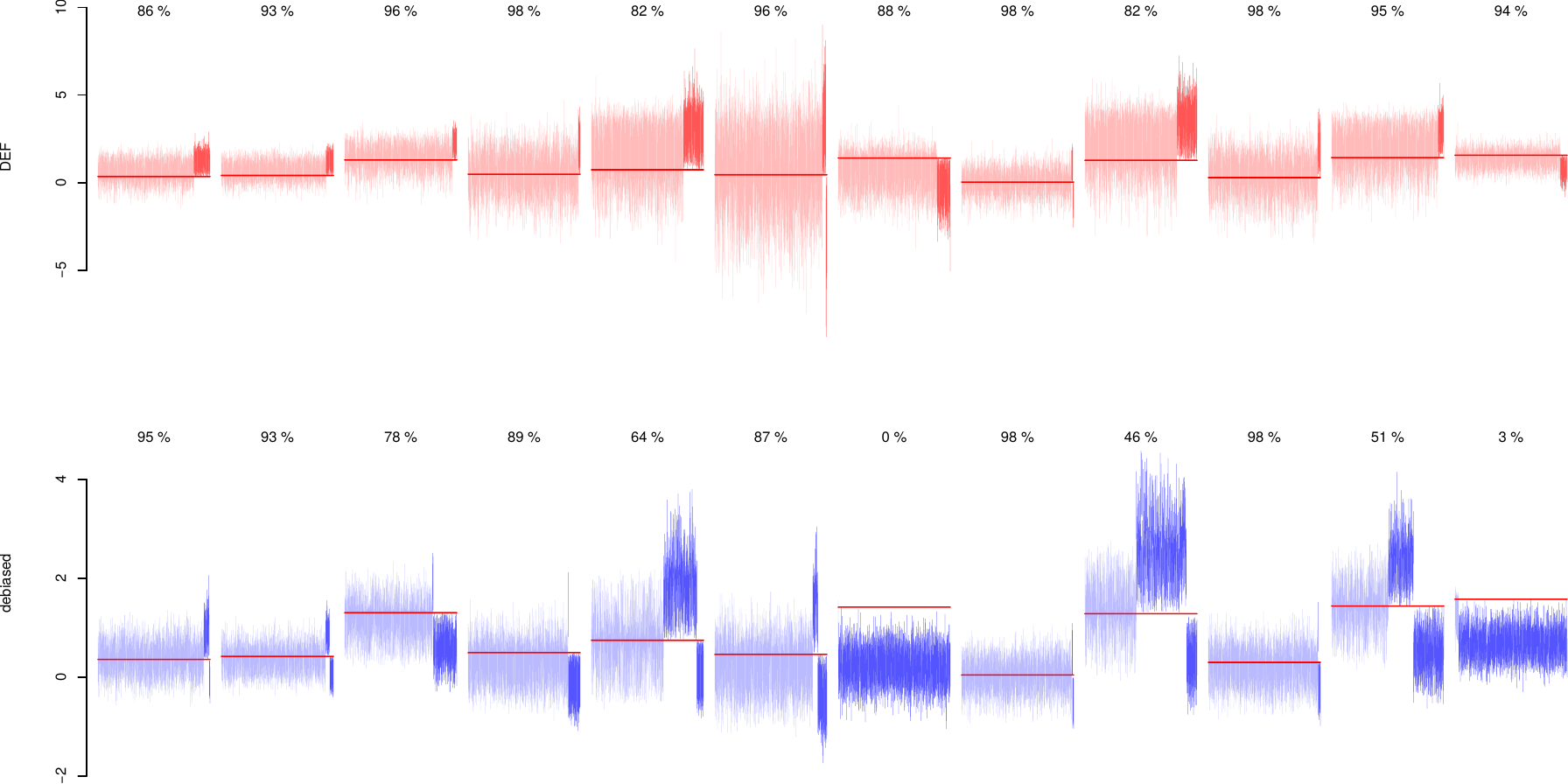}
			\caption{DEF (top row) and debiased Lasso (bottom row) 95\% confidence intervals from $500$ simulations of each of the 12 linear settings (a). The light red and blue vertical lines depict those confidence intervals that covered their target parameter $\theta$ shown the red horizontal lines. Darker vertical lines are confidence intervals that failed to cover their target and are grouped into those whose endpoints were too high, and too low. Coverage proportions are reported above each of the plots.\label{fig:n1_real_lin}}
		\end{figure*}
		\begin{figure*} 
			\centering
			\includegraphics[width=\textwidth]{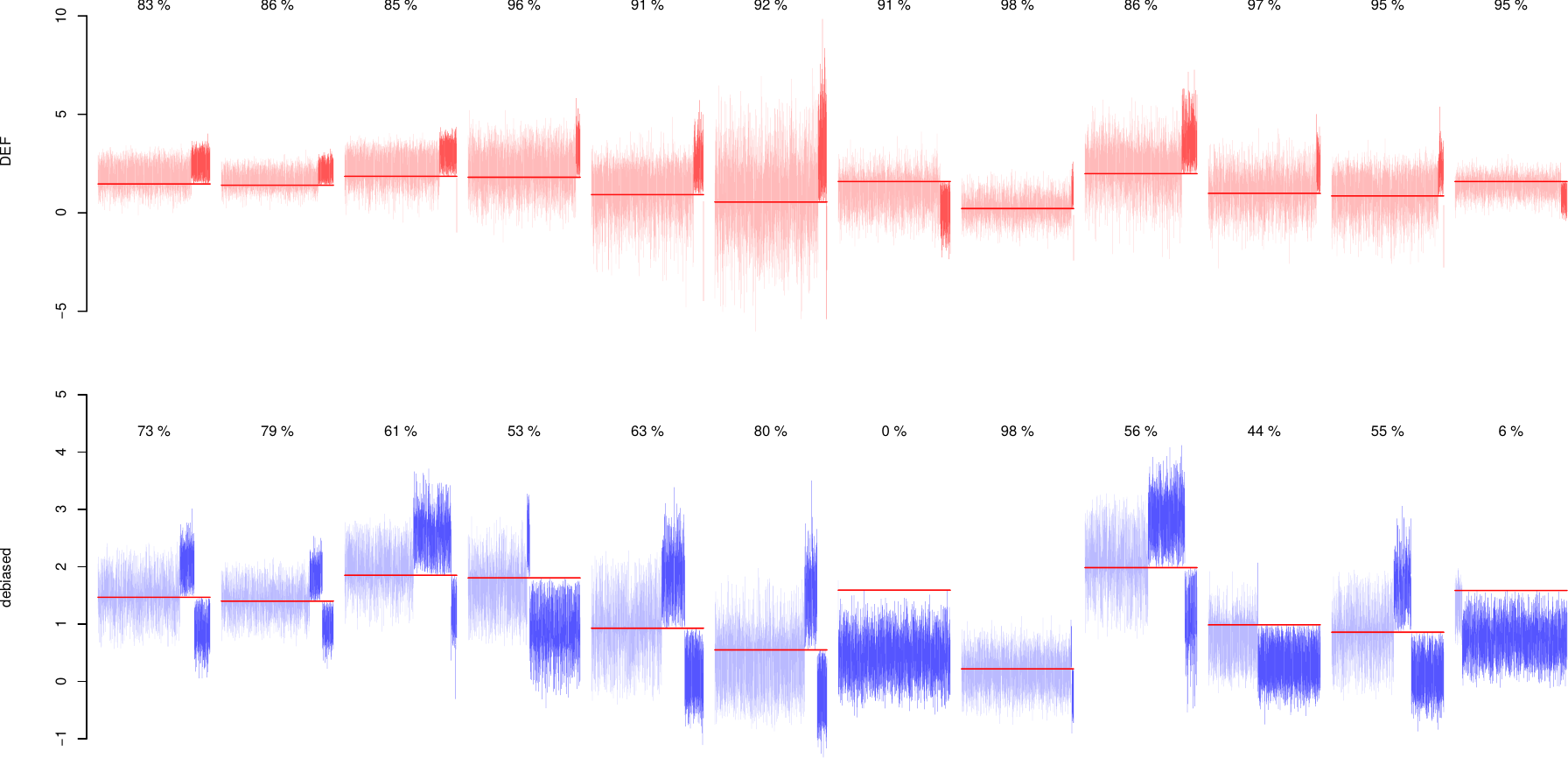}
			\caption{The slightly nonlinear setting (b); the interpretation is similar to that of Figure \ref{fig:n1_real_lin}.\label{fig:n1_real_nonlin1}}
		\end{figure*}
		\begin{figure*} 
			\centering
			\includegraphics[width=\textwidth]{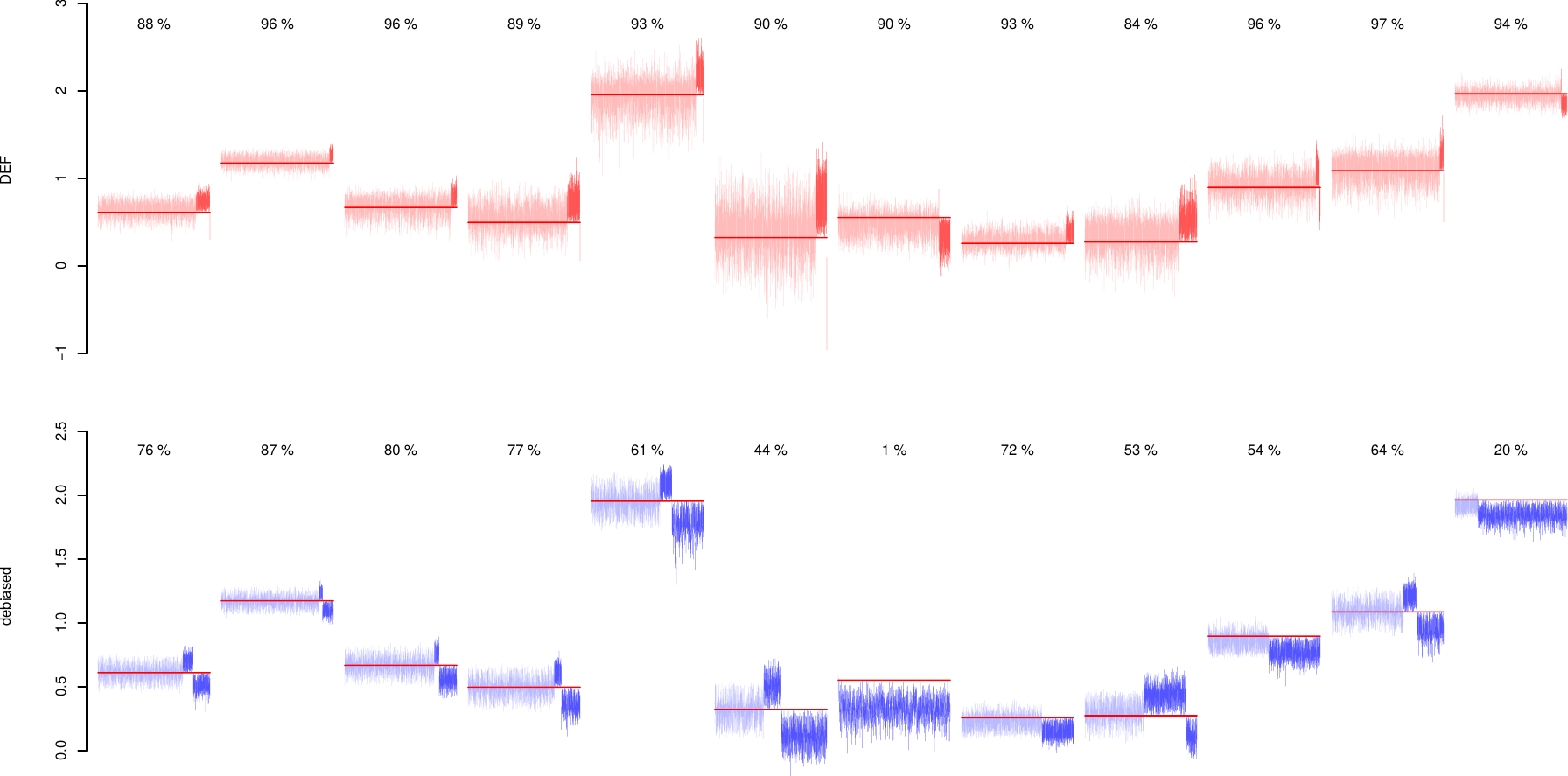}
			\caption{The highly nonlinear setting (c); the interpretation is similar to that of Figure \ref{fig:n1_real_lin}.\label{fig:n1_real_nonlin2}}
		\end{figure*}
		
		\subsection{Generalised linear models} \label{sec:exp_gen}
		Here we present some simple experiments to investigate the performance of the DEF statistic $T_{\GENDEF}$ for generalised linear models (Section~\ref{sec:high_gen}) where we take the $X$ and $Y$-models to be logistic regression models. We generate data $(Y_i, X_i, Z_i) \in \{0,1\} \times \{0,1\} \times \R^p$ for $i=1,\ldots,n$ with $(n,p)=(250, 100)$ in the following way. We first construct a multivariate distribution $P$ as in Section~\ref{sec:exp_lin}, but take $p=250$. We then simulate $Z_i \iid P$, and independently generate $Y_i \sim \text{Bern}(\pi^Y_i)$ and $X_i \sim  \text{Bern}(\pi^X_i)$ where probabilities $\pi^Y_i$ and $\pi^X_i$ satisfy
		\begin{align}
			\text{logit}(\pi^Y_i) &= \sum_{j=1}^{24} a_j Z_{ij} \beta_j  \label{eq:y_gen_hd}\\
			\text{logit}(\pi^X_i) &= \sum_{j=1}^{4} a_j Z_{ij} \beta_j, \notag
		\end{align}
		with $\beta_j \iid U[0,1]$ and $a_j = 1 - (j-1)/24$. Note that $X_i \ci Y_i \,|\, Z_i$; however the $X_i$ and $Y_i$ are positively correlated, making control of the type I error when performing the conditional independence test challenging.
		
		We generate $6$ sets of $(\beta, \mb Z)$ pairs, and for each of these simulate $250$ realisations of $\mb X$ and $\mb Y$. To each of the $6 \times 250$ datasets, we apply our DEF methodology positing logistic regression models for the $X$ and $Y$-models, and also the debiased Lasso for generalised linear models via weighted least squares (see~Section 3.2 of \citep{dezeure2015high}). The results are given in the top plot of Figure~\ref{fig:high_gen}. We see that that DEF approach is able to control the type I error by exploiting the fact that the $X$-model, being highly sparse, is relatively easy to estimate. On the other hand, the debiased Lasso requires accurate estimation of all 24 components of $\beta$ in the $Y$-model, and as a consequence is highly anti-conservative here.
		
		To assess the power of the methods, we consider an identical setup as just described, but $X_i$ is added to the right-hand side of \eqref{eq:y_gen_hd} to induce dependence. The bottom plot in Figure~\ref{fig:high_gen} presents the corresponding results. We see that whilst the $p$-values for $T_{\GENDEF}$ are sub-uniform, power is reduced compared to the debiased Lasso as expected; this is the price of the additional robustness offered by the DEF approach. 
		\begin{figure} 
			\centering
			\includegraphics[width=0.47\textwidth]{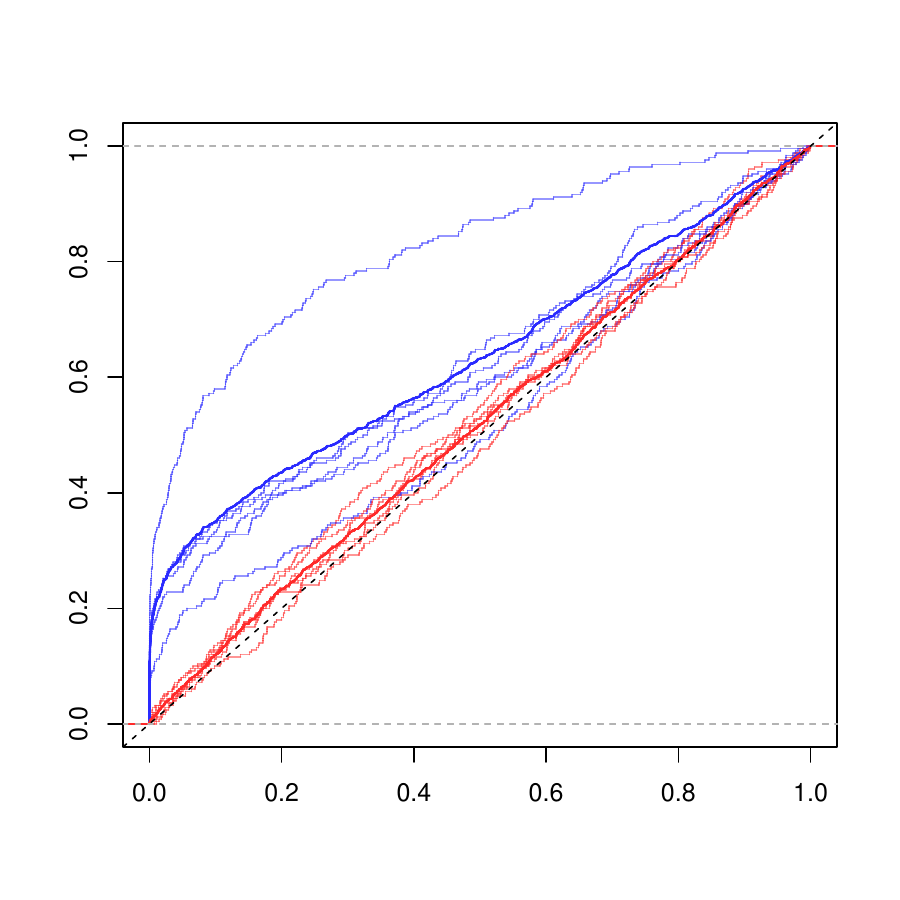}
			\includegraphics[width=0.47\textwidth]{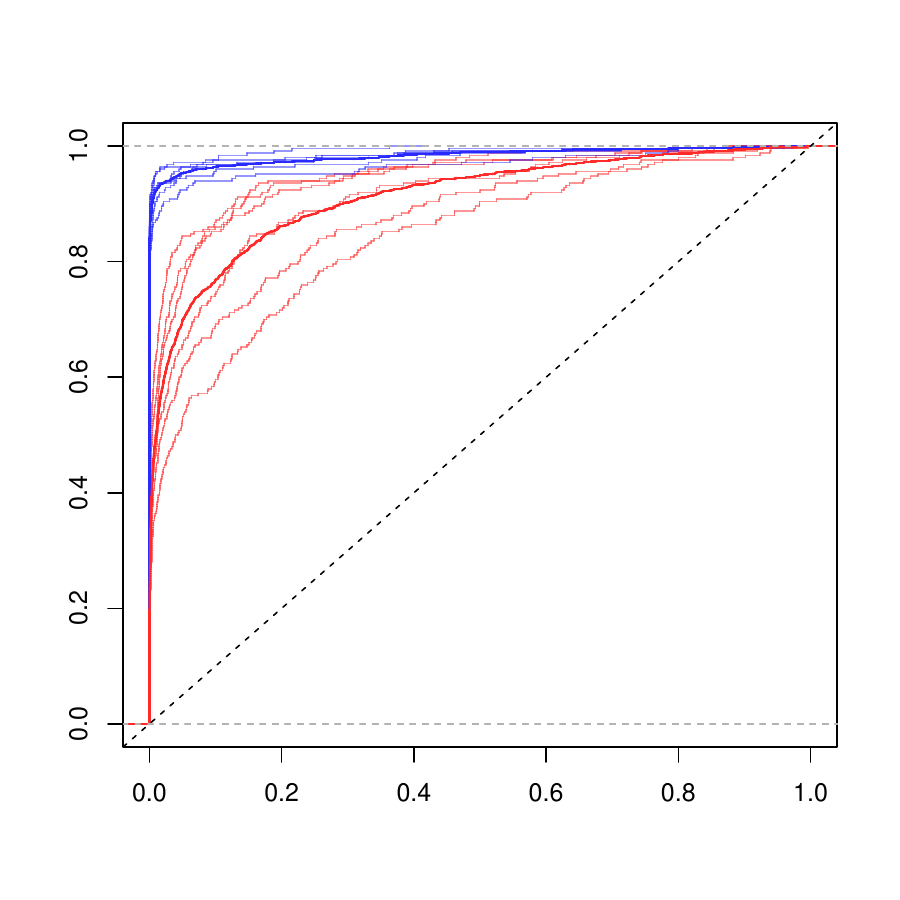}
			\caption{Empirical distribution functions (ECDFs) of $p$-values constructed via the DEF (red) and debiased Lasso (blue) approaches for null (top) and alternative (bottom) settings described in Section~\ref{sec:exp_gen}. In each panel, the fainter and thinner lines correspond to the $6$ setups with different $(\beta, \mb Z)$ whilst the thicker solid lines are aggregate ECDFs.\label{fig:high_gen}}
		\end{figure}
		
		\section{Discussion} \label{sec:discuss}
		In recent years, there has been growing interest in understanding the performance of statistical procedures when the models they have been designed for are misspecified; see for example \citet{bujamodelsI,bujamodelsII}.
		In this work, we consider regression models with response $Y$, a single predictor of interest $X$, and additional covariates $Z \in \R^p$. Our goal is assessing the significance of $X$ after controlling for $Z$, a problem which may be equivalently framed as testing for the null hypothesis $H_0$ of conditional independence $Y \ci X \,|\, Z$. If either the $Y$ or the $X$-model is
		linear or generalised linear, the situation is favourable for DEF inference.
		
		The DEF property holds for a test statistic $T$ if the following is
		true. Under $H_0: X \ci Y \,|\,Z$ we have $T
		\indist {\mathcal N}(0,1)$ when at least one among the $Y$ and $X$-model
		holds. Examples of such test statistics include the
		following ones:
		\begin{enumerate}[(i)]
			\item $T_{\OLS}$, the standard $t$-statistic  for
			testing 
			significance of the parameter corresponding to $X$ as laid out in
			Theorem~\ref{thm:lm};
			\item $T_{\GLM, 1}$ and $T_{\GLM, 2}$, the modified Wald statistics with correction
			factors (see $\eqref{eq:correction}$) for the standard error as
			discussed in Section~\ref{sec:low_gen};
			\item $T_{\DEF}$ in \eqref{eq:T_DEF} based on a symmetrised version
			of the debiased Lasso in a high-dimensional linear model as
			discussed in  Theorem~\ref{thm:DEF_lin};
			\item $T_{\GENDEF}$ based on a symmetrised version of the debiased
			Lasso in high-dimensional generalised linear models
			as discussed
			in Section~\ref{sec:high_gen}.
		\end{enumerate}
		In cases (iii) and (iv), we explicitly model both the $X$ and $Y$
		regressions, and also explicitly build in symmetry into the test statistics
		to reflect the symmetry of the null hypothesis. On the other hand, the first two examples,
		which relate to low-dimensional settings, are not obviously engineered to
		have the DEF property. An interesting finding here is that the these
		classical test statistics implicitly use a linear $X$-model. We may
		speculate that this hidden robustness of classical significance tests to
		potentially severe $Y$-model misspecification has in some way contributed
		to their popularity and usefulness given that
		all models---but as we have established here, \emph{not}  all inferential
		tools---are wrong \citep{box1976science}.
		
		As a separate point of interest, we argue that confidence intervals in
		high-dimensional settings should be constructed via inversion of tests
		instead of relying directly on asymptotic distribution theory for the relevant pivot. Supporting empirical evidence
		is given in Section~\ref{sec:high_conf}.
		
		Our work also offers a number of potentially fruitful directions for
		further research. For example, it would be interesting to investigate the power
		properties of our DEF procedures. In addition, lower bounds on the power
		that can be achieved subject to a DEF property holding would be worth
		exploring. Finally, the analogue of the method proposed for confidence
		interval construction via inverting tests seems not to have the DEF
		property in the context of generalised linear models. It would be very
		useful to develop DEF confidence intervals for this setting, or indeed
		prove that it is in some sense not possible.  

\begin{appendix}
\section{Proofs}
\subsection{Proof of Theorem~\ref{thm:lm}} \label{sec:proof_prop1}
The relationship \eqref{eq:parcor_rel} between the $t$-statistic $T$ and the partial correlation $\hat{\rho}$ follows easily from the following observations:
\begin{gather*}
	\hat{\theta} = \frac{\mb Y^T (\mb I - \mb P) \mb X}{\|(\mb I - \mb P) \mb X\|_2^2}, \qquad \{(\tilde{\mb Z}^T \tilde{\mb Z})^{-1}\}_{11} = \|(\mb I - \mb P) \mb X\|_2^{-2}, \\
	\begin{split}
			\|(\mb I - \tilde{\mb P})\mb Y\|_2^2 &= \|(\mb I - \mb P)\mb Y\|_2^2 - \frac{\{\mb Y^T(\mb I - \mb P)\mb X\}^2}{\|(\mb I - \mb P)\mb X\|_2^2} \\
			&= \|(\mb I - \mb P)\mb Y\|_2^2(1-\hat{\rho}^2).
	\end{split}
\end{gather*}
Thus it suffices to show that $\sqrt{n} \hat{\rho} \indist \mathcal{N}(0, 1)$ since this implies that $\hat{\rho} \inprob 0$.
As $\hat{\rho}$ is symmetric in $\mb X$ and $\mb Y$, we need only show these facts hold assuming (Y1) and (Y2). Note then we have
\[
\hat{\rho} =  \frac{\mb X^T(\mb I - \mb P)\mbb\varepsilon}{\|(\mb I - \mb P)\mbb\varepsilon\|_2 \|(\mb I - \mb P)\mb X\|_2}
\]
where since $\mb X \ci \mb Y \,|\, \mb Z$, the properties of $(\varepsilon_i)_{i=1}^n$ hold conditionally on $(\mb Z, \mb X)$.
We first show $\|(\mb I - \mb P)\mbb\varepsilon\|_2/\sqrt{n} \inprob \sigma$. We have
\begin{align} \label{eq:sigma_tilde_sq}
	\frac{1}{n}\|(\mb I - \mb P)\mbb\varepsilon\|_2^2 = \frac{1}{n}\|\mbb\varepsilon\|_2^2 - \frac{1}{n} \mbb\varepsilon^T \mb P \mbb\varepsilon.
\end{align}
By the weak law of large numbers, the first term converges in probability to $\sigma^2$. For the second term, note that due to (Y1), using the cyclic property of the trace operator,
\begin{align*}
\E  \mbb\varepsilon^T \mb P \mbb\varepsilon & = \E  \tr(\mbb\varepsilon^T \mb P \mbb\varepsilon ) = \tr (\E  \mbb\varepsilon \mbb\varepsilon^T \mb P) \\
 = & \tr \E\{\underbrace{\E(  \mbb\varepsilon \mbb\varepsilon^T  \,|\, \mb P)}_{=\sigma^2 \mb I} \mb P\}] = \sigma^2 \tr(\mb P) \leq \sigma^2 p.
\end{align*}
Thus the final term in \eqref{eq:sigma_tilde_sq} has expectation tending to $0$ as $p/n \to 0$. By Markov's inequality, this must therefore go to 0 in probability, and so $\|(\mb I - \mb P)\mbb\varepsilon\|_2/\sqrt{n} \inprob \sigma$ as required.

Next we claim that
\begin{equation*}
	A_n := \frac{\mb X^T (\mb I - \mb P)\mbb\varepsilon}{\|(\mb I - \mb P)\mb X\|_2}  = \|\mb R\|_2^{-1} \sum_{i=1}^n R_i\varepsilon_i \indist \mathcal{N}(0, \sigma^2).
\end{equation*}
Note that conditional on $(\mb X, \mb Z)$, the $\varepsilon_i$ are i.i.d.\ with variance $\sigma^2$ and third moment bounded by $M$.
Lemma~\ref{lem:CLT} below with $\mb R_n = (\mb I - \mb P) \mb X$ then shows that $A_n \indist \mathcal{N}(0, \sigma^2)$.
Combining with the previous result and applying Slutsky's Lemma gives $\sqrt{n}\hat{\rho} \indist \mathcal{N}(0, 1)$ as required

\qed

\begin{lem} \label{lem:CLT}
	Let $(\varepsilon_{in})_{i \leq n}$ and $(R_{in})_{i \leq n}$ be triangular arrays of random variables and define $\mb R_{n} = (R_{1n}, \ldots, R_{nn})$ for all $n$. Assume these random variables satisfy the following conditions:
	\begin{enumerate}[(i)]
		\item $\varepsilon_{1n}, \ldots, \varepsilon_{nn}$ are independent conditional on $\mb R_n$;
		\item for all $i=1,\ldots,n$ and some $\delta, M > 0$,
		\begin{gather*}
		\E(\varepsilon_{in} | \mb R_n)=0, \; \; \E(\varepsilon_{in}^2 | \mb R_n) = \sigma^2>0, \\ \E(|\varepsilon_{in}|^{2+\delta} | \mb R_n) < M;
		\end{gather*}
		\item $\pr(\mb R_n =\mb 0) \to 0$;
		\item for some $\delta > 0$,
		\begin{equation*}
			A_n := \begin{cases}
				\frac{1}{\|\mb R_n\|_2^{2+\delta}} \sum_{i=1}^n |R_{in}|^{2+\delta} & \quad \text{ if } \mb R_n \neq \mb 0, \\
				0 & \quad \text{ if } \mb R = \mb 0,
			\end{cases}
		\end{equation*}
	satisfies$A_n \inprob 0$, and
		\[
		\frac{1}{\|\mb R_n\|_2^{2+\delta}} \sum_{i=1}^n |R_{in}|^{2+\delta} \ind_{\{\mb R_n \neq \mb 0\}} \inprob 0.
		\]
	\end{enumerate}
	Then 
	\[
	B_n := \|\mb R_n\|_2^{-1} \sum_{i=1}^n R_{in} \ind_{\{\mb R_n \neq 0\}} \varepsilon_{in} \indist \mathcal{N}(0, \sigma^2).
	\]
\end{lem}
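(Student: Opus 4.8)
The plan is to condition on $\mb R_n$ and invoke the Berry--Esseen theorem for sums of independent, not necessarily identically distributed, random variables. Since the limit law is continuous, it suffices to show that $\pr(A_n \le t) \to \Phi(t/\sigma)$ for every $t \in \R$, where $\Phi$ denotes the standard normal distribution function.

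Working conditionally on $\mb R_n$, consider first the event $\{\mb R_n = \mb 0\}$: there the indicator in the definition of $A_n$ forces $A_n = 0$, and by condition (iii) this event has probability tending to $0$. On $\{\mb R_n \neq \mb 0\}$, by (i) and (ii) the sum $\sum_{i=1}^n R_{in}\varepsilon_{in}$ is, conditionally on $\mb R_n$, a sum of independent mean-zero variables with total variance $\sigma^2\|\mb R_n\|_2^2$ and aggregate third absolute moment at most $M\sum_{i=1}^n |R_{in}|^3$, and $A_n/\sigma$ is this sum divided by its standard deviation. The Berry--Esseen theorem therefore provides a universal constant $C_0$ such that, on $\{\mb R_n \neq \mb 0\}$,
\begin{align*}
\sup_{t \in \R}\bigl|\pr(A_n \le t \mid \mb R_n) - \Phi(t/\sigma)\bigr| &\le \frac{C_0 M}{\sigma^3}\,\frac{\sum_{i=1}^n |R_{in}|^3}{\|\mb R_n\|_2^3} \\
&= \frac{C_0 M}{\sigma^3}\cdot\frac{1}{\sqrt n}\cdot\frac{\tfrac1n\sum_{i=1}^n|R_{in}|^3}{\bigl(\tfrac1n\sum_{i=1}^n R_{in}^2\bigr)^{3/2}} =: \rho_n'.
\end{align*}

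Next I would average over $\mb R_n$. Writing $D_n := \sup_{t}|\pr(A_n \le t \mid \mb R_n) - \Phi(t/\sigma)|$, which is at most $1$, the two cases above give $D_n \le \rho_n'\ind_{\{\mb R_n \neq \mb 0\}} + \ind_{\{\mb R_n = \mb 0\}}$. Condition (iv) says precisely that $\E(\rho_n'\ind_{\{\mb R_n \neq \mb 0\}}) \to 0$ (the prefactor $C_0 M/\sigma^3$ being constant), while (iii) gives $\pr(\mb R_n = \mb 0) \to 0$, so $\E(D_n) \to 0$. Finally, for each fixed $t$,
\[
\bigl|\pr(A_n \le t) - \Phi(t/\sigma)\bigr| = \bigl|\E\{\pr(A_n \le t \mid \mb R_n) - \Phi(t/\sigma)\}\bigr| \le \E(D_n) \to 0,
\]
which establishes $A_n \indist \mathcal{N}(0, \sigma^2)$.

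I do not expect a genuine obstacle here: the proof is essentially conditional Berry--Esseen bookkeeping. The two points needing care are that the Berry--Esseen constant be independent of the (random) summand weights $R_{in}/\|\mb R_n\|_2$ --- which is exactly what the non-i.i.d.\ version of the theorem delivers --- and that the degeneracy event $\{\mb R_n = \mb 0\}$, on which $A_n$ is identically $0$, be asymptotically negligible, which is why condition (iii) is assumed alongside the Lyapunov-type condition (iv). One could instead verify the Lindeberg condition conditionally and invoke the Lindeberg--Feller central limit theorem, but the uniform third-moment bound in (ii) makes Berry--Esseen the cleaner route.
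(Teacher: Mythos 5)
Your proof is correct and follows essentially the same route as the paper's: a conditional Berry--Esseen bound given $\mb R_n$ on the event $\{\mb R_n \neq \mb 0\}$, whose expectation is controlled by condition (iv), followed by averaging over $\mb R_n$ and absorbing the degenerate event via condition (iii). The only cosmetic difference is that you bound the conditional Kolmogorov distance by $1$ on $\{\mb R_n = \mb 0\}$ where the paper adds $\pr(\mb R_n = \mb 0)$ separately at the end; these are the same estimate.
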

\begin{proof}
	Let the random sequences above be defined on a common probability space $(\Omega, \mathcal{F}, \pr)$.
	Let $(n_k)_{k=1}^\infty \subseteq \mathbb{N}$ be an arbitrary subsequence. Then we know there exists a further subsequence $(n_{k(l)})_{l=1}^\infty$ on which the following occur:
	\begin{enumerate}[(a)]
		\item the convergence in (iv) above happens almost surely, that is the probability that
		\[
		\lim_{l \to \infty}  \frac{1}{\|\mb R_{n_{k(l)}}\|_2^{2+\delta}} \sum_{i=1}^{n_{k(l)}} |R_{in_{k(l)}}|^{2+\delta} \ind_{\{\mb R_{n_{k(l)}} \neq \mb 0\}} = 0
		\]
		equals one.
		\item $\sum_{l=1}^\infty \pr(\mb R_{n_{k(l)}} = \mb 0) < \infty$.
	\end{enumerate}
	By the first Borel--Cantelli lemma, we have that the sequence of events $\Omega_l := \mb R_{n_{k(l)}} \neq \mb 0$ satisfies $\pr(\liminf_{l \to \infty} \Omega_l)=1$. Let $\Omega_2$ be the intersection of the event in (a) above and $\liminf_{l \to \infty} \Omega_l$. Note that $\pr(\Omega_2)=1$.
	%
	
	Now observe that for each $\omega \in \Omega_2$, writing $\mb r:=\mb R_n(\omega)$, we have $\mb r \neq \mb 0$ and
	\begin{align*}
		C(\omega,n):&=\frac{\sum_{i=1}^{n} \E(|R_{in} \varepsilon_{in}|^{2+\delta} \,|\, \mb R_n=\mb r)}{\left(\sum_{i=1}^n \E (R_{in}^2 \varepsilon_{in}^2 \,|\, \mb R_n=\mb r) \right)^{1 + \delta/2}} \\
		&=
		\frac{\sum_{i=1}^{n} |r_{in}|^{2+\delta}\E (|\varepsilon_{in}|^{2+\delta} \,|\, \mb R_n=\mb r)}{\left(\sum_{i=1}^n r_{in}^2 \E(\varepsilon_{in}^2 \,|\, \mb R_n= \mb r) \right)^{1 + \delta/2}} \\
		&< \frac{M}{\sigma^2}\frac{\sum_{i=1}^{n} |r_{in}|^{2+\delta}}{\left(\sum_{i=1}^n r_{in}^2 \right)^{1 + \delta/2}}.
	\end{align*}
	Thus
	\[
	\lim_{l \to \infty} B(\omega, n_{k(l)}) = 0
	\]
	for all $\omega \in \Omega_2$.
	
	For each $n$, let $\tilde{P}_n : \R^n \times \mathcal{F} \to [0,1]$ be a regular conditional probability given $\mb R_n$, and for $\omega \in \Omega$, let $P_{n,\omega} : \mathcal{F} \to [0,1]$ be given by $P_{n,\omega}(A) = \tilde{P}_n(\mb{R}_n(\omega),A)$. Denoting expectation with respect to $P_{n,\omega}$ by $E_{n,\omega}$, note that
	\[
	C(\omega,n) = \frac{\sum_{i=1}^{n} E_{n,\omega}( |R_{in} \varepsilon_{in}|^{2+\delta})}{\left(\sum_{i=1}^n E_{n,\omega}(R_{in}^2 \varepsilon_{in}^2) \right)^{1 + \delta/2}}.
	\]
	From the above, for each $\omega \in \Omega_2$, we can apply the Lindeberg--Feller central limit theorem for triangular arrays \citep[Prop.~2.27]{van2000asymptotic} along the sequence of probability measures given by $(P_{n_{k(l)},\omega})_{l=1}^\infty$, noting that Lyapunov's condition implies the Lindeberg--Feller condition. Writing $Z \sim \mathcal{N}(0, \sigma^2)$, we have that for any $\omega \in \Omega_2$ and
	any continuous bounded function $g:\R \to \R$,
	\begin{align*}
		\lim_{l \to \infty} E_{n_{k(l)},\omega}\left\{ g\left(B_{n_{k(l)}}\right)\right\} &=
		\lim_{l \to \infty} \E\left\{ g\left(B_{n_{k(l)}}\right) \,|\, \mb R_{n_{k(l)}} \right\}(\omega) \\
		&= \E g(Z).
	\end{align*}
	Now as $\pr(\Omega_2)=1$, we have
	\[
	\lim_{l \to \infty} \E \left\{ g\left(B_{n_{k(l)}}\right) \,|\, \mb R_{n_{(k(l)}} \right\} = \E g(Z)
	\]
	almost surely.
	Then as the subsequence $(n_k)_{k=1}^\infty \subseteq \mathbb{N}$ was arbitrary, we see that in fact
	\[
	\E \left\{ g\left(B_{n}\right) \,|\, \mb R_n \right\} \inprob \E g(Z).
	\]
	Finally, note that as $g$ is bounded, we may apply dominated convergence theorem to show that
	\[
	\E \left\{ g\left(B_{n}\right)\right\} \to \E g(Z).
	\]
	As this holds for every continuous bounded $g$, we have the result.
\end{proof}

\subsection{Proof and regularity conditions for Theorem~\ref{thm:gen_lin}} 
\subsubsection{Regularity conditions} \label{sec:reg1}
Assume the following regularity conditions.
\begin{enumerate}[(i)]
	\item $L(x t + z^T\beta;y) > 0$ almost everywhere  and $\E | \ell(X t +Z^T\beta;Y)| < \infty$ for all $(t, \beta) \in \Theta$.
	\item The ratio $L(xt_1 + z^T\beta_1;y) / L(xt_2 + z^T\beta_2;y)$ is not almost everywhere equal to $1$ when $(t_1, \beta_1) \neq (t_2, \beta_2)$.
	\item There exists an open set $K \subset \Theta$ containing $(0, \beta^\dagger)$ such that $\E\ell(Xt  + Z^T\beta)$ is partially differentiable with respect to $t$ and with respect to $\beta_j$ for all $j$, with  integrable derivatives given by $\E\{X U(Xt + Z^T\beta;Y)\}$ and $\E\{Z_jU(Xt + Z^T\beta;Y)\}$ respectively.
\end{enumerate}
\subsubsection{Proof of Theorem~\ref{thm:gen_lin}}
Suppose first that the $Y$-model is well-specified. Then as $X \ci Y \,|\, Z$, we know from (ii) that $\theta=0$. Standard arguments show that then $(0, \beta^Y)$ maximises $\E \ell(t X +Z^T\beta;Y)$ over $(t, \beta) \in \Theta$ and satisfies the score equations. Thus $\beta^Y = \beta^\dagger$.

Let us now consider the case where
the $X$-model is linear. We first show that $(\theta, \beta)=(0, \beta^\dagger)$ satisfies \eqref{eq:z_score}.
By optimality of $\beta^\dagger$, we must have
\begin{equation} \label{eq:partial_d}
	\E\{ZU(Z^T\beta^\dagger;Y)\}=0,
\end{equation}
so $(t, \beta)=(0, \beta^\dagger)$ satisfies \eqref{eq:z_score}. It suffices to check that this also satisfies \eqref{eq:x_score}. We have
\begin{align}
	\E\{X U(Z^T\beta^\dagger;Y)\} &= \E[ \E\{X U(Z^T\beta^\dagger;Y)\,|\,Z\}] \notag\\
	&= \E[\E\{(Z^T\beta^X + \varepsilon ) U(Z^T\beta^\dagger;Y)\,|\,Z\}] \notag\\
	&= \E[\E\{\varepsilon U(Z^T\beta^\dagger;Y)\,|\,Z, Y\}], \label{eq:cond_exp0} \\
	&= \E[U(Z^T\beta^\dagger;Y) \E\{\varepsilon \,|\,Z, Y\}] = 0.
\end{align}
using property \eqref{eq:partial_d} of $\beta^\dagger$ in \eqref{eq:cond_exp0} and that $\E\{\varepsilon \,|\,Z, Y\} = \E\{\varepsilon \,|\,Z\}$ due to the conditional independence $X \ci Y \,|\, Z$ in the final line. \qed
\subsection{Proof and regularity conditions for Theorem~\ref{thm:gen_var}} 
\subsubsection{Regularity conditions} \label{sec:reg2}
In addition to the regularity conditions laid out in Section~\ref{sec:reg1}, we assume the following.
\begin{enumerate}[(i)]
	\item When the $Y$-model holds, differentiation and integration can be interchanged such that the variance of the score is equal to the Fisher information matrix, i.e., $H=V$, and moreover $\E L''(Z^T\beta^*;Y)=0$.
	\item The solution $(\theta^*, \beta^*)$ to the score equations is unique.
	\item $\E U'(Z^T\beta^*) \neq 0$.
	\item $ \E |U'(Z^T\beta^*)| <\infty$, $\E \|Z\|_2^2 |U'(Z^T\beta^*)| < \infty$ and  $\E \{X^2 |U'(Z^T\beta^*)|\} < \infty$.
\end{enumerate}
\subsubsection{Proof of Theorem~\ref{thm:gen_var}}
When the $Y$-model holds, we have $H^{-1}VH^{-1}=H^{-1}$, and
\[
\frac{\E\{U^2(Z^T\beta^*)\}}{\E\{ U'(Z^T\beta^*)\}}=1
\]
as $\E L''(Z^T\beta^*;Y)=0$.
We now turn to the case where the $X$-model holds. Let $\varepsilon = X - Z^T\beta^X$ and note that $\E(\varepsilon\,|\,Z)=0$.
We know from Theorem~\ref{thm:gen_lin} that $\theta^*=0$. Let us first compute $H$. We have 
\[
-H_{1,j+1} = \E \{X Z_j U'(Z^T\beta^*)\}.
\]
Now 
\begin{align*}
	& \E \{X Z_j U'(Z^T\beta^*) \,|\, Y, Z \} \\
	= & Z_j U'(Z^T\beta^*) \E(Z^T \beta^X + \varepsilon \,|\,Y, Z) \\
	= &Z_j U'(Z^T\beta^*) Z^T \beta^X.
\end{align*}
Here we have used the fact that as $Y \ci Y \,|\, Z$, $\E(\varepsilon \,|\, Y, Z) = \E( \varepsilon \,|\, Z) = 0$. Considering now $H_{11}$, we have
\[
\E\{X^2 U'(Z^T\beta^*) |Y, Z\} = \{\E(\varepsilon^2|Y,Z) + (Z^T \beta^X)^2\} U'(Z^T\beta^*).
\]
Thus, writing $A = \E \{ZZ^T U'(Z^T\beta^*)\} \in \R^{p \times p}$, we have
\[
H = -\begin{pmatrix} (\beta^X)^T A\beta^X + \E\{\varepsilon^2U'(Z^T\beta^*)\}  & (\beta^X)^T A  \\
	A\beta^X & A \end{pmatrix}.
\]
Using standard formulas for the blockwise inverse of matrices in terms of Schur complements, we have that the first column $h$ of $H^{-1}$ satisfies
\[
h= \begin{pmatrix} -1 \\  \beta^X \end{pmatrix} [\E\{\varepsilon^2U'(Z^T\beta^*)\}]^{-1}.
\]
Thus
\begin{align} \label{eq:mod_var}
	(H^{-1} V H^{-1})_{11} = h^T V h= \frac{\E\{ \varepsilon^2 U^2 (Z^T \beta^*)\}}{ [\E\{\varepsilon^2  U'(Z^T\beta^*)\}]^{2}}.
\end{align}
Now as $\E(\varepsilon^2 \,|\,Z) = \Var(X\,|\,Z)=\Var(X)=\E(\varepsilon^2)$, we have that for any measurable function $f$ of $Z$ with $\E |f(Z)| \,, \E (|f(Z)| \varepsilon^2) < \infty$,
\[
\E\{\varepsilon^2 f(Z)\} = \E [f(Z)\E\{ \varepsilon^2 \,|\, Z\}] = \E\{f(Z)\} \E\{\varepsilon^2\}.
\]
Thus we have that the quantity in \eqref{eq:mod_var} is equal to
\begin{align*}
	\frac{\E \{ U^2 (Z^T \beta^*)\}}{\E (\varepsilon^2) \, \{\E\,  U'(Z^T\beta^*)\}^{2}}  = - (H^{-1})_{11} \frac{\E\{U^2(Z^T\beta^*)\}}{\E\{ U'(Z^T\beta^*)\}}.
\end{align*}
\qed
\subsection{Proof and regularity conditions for Theorem~\ref{thm:wald}}

\subsubsection{Regularity conditions} \label{sec:reg3}
In addition to the regularity conditions laid out in Sections~\ref{sec:reg1} and \ref{sec:reg2}, we  assume that $\Theta$ is compact and that there exists functions $f_1,f_2 : \R^{p+2} \to [0, \infty)$ such that for all $(t,\beta) \in \Theta$,
\begin{align*}
	|U'(tX + Z^T\beta;Y)| &\leq f_1(X,Y,Z), \\
	U^2(tX + Z^T\beta;Y) &\leq f_2(X,Y,Z)
\end{align*}
with $\E f_j(X,Y,Z) < \infty$ for $j=1,2$. We further assume that $\E U^2(Z^T\beta^*) > 0$ and that $U'$ is continuous.
\subsubsection{Proof of Theorem~\ref{thm:wald}}
From Theorems~\ref{thm:gen_lin} and \ref{thm:gen_var}, it suffices by Slutsky's lemma and the continuous mapping theorem to show that
\[
\hat{C}_j \inprob \frac{\E\{U^2(Z^T\beta^*)\}}{\E\{ U'(Z^T\beta^*)\}}.
\]
Let us consider $j=2$; the arguments are similar for $j=1$. By Slustky's lemma, it suffices to show that
\begin{align}
	\frac{1}{n}\sum_{i=1}^n U'(\hat{\theta}X_i + Z_i^T\beta) &\inprob \E U'(tX + Z^T\beta)  \label{eq:prob1}\\
	\frac{1}{n}\sum_{i=1}^n U^2(\hat{\theta}X_i + Z_i^T\beta) &\inprob \E U^2(tX + Z^T\beta). \label{eq:prob2}
\end{align}
Theorem 2 of \citet{jennrich1969asymptotic} shows that 
\begin{align*}
	\sup_{(\beta,t) \in \Theta} \abs{\frac{1}{n}\sum_{i=1}^n U'(tX_i + Z_i^T\beta) - \E U'(tX + Z^T\beta)} &\to 0 \\
	\sup_{(\beta,t) \in \Theta} \abs{\frac{1}{n}\sum_{i=1}^n U^2(tX_i + Z_i^T\beta) - \E U^2(tX + Z^T\beta)} &\to 0
\end{align*}
almost surely.
By assumption, $(\hat{\theta}, \hat{\beta}^Y) \inprob  (\theta^*, \beta^*) = (0, \beta^*)$, using Theorem~\ref{thm:gen_lin} for the final equality. Thus for any subsequence $(m(n))_{n=1}^\infty$, there exists a further subsequence $(l_{m(n)})_{n=1}^\infty$ on which the above convergence is almost sure. Let us write $f(t, \beta) = \E U'(tX + Z^T\beta)$. Then given $\epsilon >0$, there exists $N_1$ such that for all $n \geq N_1$,
\begin{align*}
	& \abs{\frac{1}{l_{m(n)}}\sum_{i=1}^{l_{m(n)}} U'(\hat{\theta}X_i + Z_i^T\hat{\beta}^Y) - f(\hat{\theta},\hat{\beta}^Y)} \\
	\leq &\sup_{(\beta,t) \in \Theta} \abs{\frac{1}{l_{m(n)}}\sum_{i=1}^{l_{m(n)}} U'(tX_i + Z_i^T\beta) - f(t,\beta)} < \frac{\epsilon}{2}.
\end{align*}
Note that $\hat{\theta}$ and $\hat{\beta}^Y$ depend on the sample size, though we have suppressed this in the notation.
Meanwhile, by continuity of $U'$ and the continuous mapping theorem, on along $(l_{m(n)})_{n=1}^\infty$ we have
\[
U'(\hat{\theta}X + Z^T\hat{\beta}^Y) \to U'(Z^T\beta^*)
\]
almost surely. Thus by dominated convergence, we have that
\[
f(\hat{\theta}, \hat{\beta}^Y) \to f(0, \beta^*)
\]
along the same subsequence, and so there exists $N_2 \geq N_1$ such that for all $n \geq N_2$
\[
|f(\hat{\theta}, \hat{\beta}^Y) - f(0, \beta^*)| < \epsilon/2 ;
\]
note that $\hat{\theta}$ and $\hat{\beta}^Y$ above are evaluated at sample sizes $l_{m(n)}$ for $n \geq N_2$. Putting things together, we see that on the subsequence $(l_{m(n)})_{n=1}^\infty$, we have
\[
\frac{1}{l_{m(n)}}\sum_{i=1}^{l_{m(n)}} U'(\hat{\theta}X_i + Z_i^T\hat{\beta}^Y) \to  \E U'(Z^T \hat{\beta}^Y)
\]
almost surely. As the original subsequence $(m(n))_{n=1}^\infty$ was arbitrary, we see that \eqref{eq:prob1} holds. The argument to show \eqref{eq:prob2} proceeds similarly. \qed

\subsection{Proof of Theorem~\ref{thm:DEF_lin}}
By symmetry, it is enough to show the result when (Y1) and (Y3)--(Y6) hold. On the event where $\mb R \neq \mb 0$, we have
\begin{align*}
	\frac{(\mb Y - \mb Z \hat{\beta}^Y)^T(\mb X - \mb Z \hat{\beta}^X)}{\|\mb X - \mb Z \hat{\beta}^X\|_2} &= \frac{\mb R^T}{\|\mb R\|_2} \mb Z (\beta^Y - \hat{\beta}^Y) +  \frac{\mb R^T}{\|\mb R\|_2} \mbb\varepsilon.
\end{align*}
The KKT conditions of the Lasso regression of $\mb X$ on $\mb Z$ imply $\|\mb Z^T \mb R \|_\infty / \|\mb R\|_2 \leq \sqrt{n}\lambda_X$. 
Thus by H\"older's inequality and (Y4), we have that
\begin{align*}
	&|\mb R^T \mb Z (\mbb\beta^Y - \hat{\mbb\beta}^Y)|/\|\mb R\|_2 \ind_{\{\mb R \neq \mb 0\}} \\
	\leq& \|\mb R^T \mb Z \|_\infty \|\mbb\beta^Y - \hat{\mbb\beta}^Y\|_1 / \|\mb R\|_2  \ind_{\{\mb R \neq \mb 0\}}\\
	=& O_{\pr} (\sqrt{\log(p)} \times s_Y \sqrt{\log(p)/n} ).
\end{align*}
From (Y3) we see that $|\mb R^T \mb Z (\mbb\beta^Y - \hat{\mbb\beta}^Y)| / \|\mb R\|_2 \ind_{\{\mb R \neq \mb 0\}} \inprob 0$.

The proof that $\mb R^T \mbb\varepsilon / \|\mb R\|_2 \ind_{\{\mb R \neq \mb 0\}} \indist \mathcal{N}(0, \sigma^2)$ is identical to the argument used in the proof of Theorem~\ref{thm:lm} and uses Lemma~\ref{lem:CLT} (see Section~\ref{sec:proof_prop1}). Slutsky's lemma and (Y5) then yield the desired result. \qed

%

\subsection{Proof of Theorem~\ref{thm:l_1_bd}}
Let $\check{\mb Y} = \mb D^Y \tilde{\mb Y}$ and let $\check{\mb Z} = (\mb D^Y \mb Z\Lambda^Y, \mb D^X \mb Z\Lambda^X)$. Note that $\check{\mb Y} = \check{\mb Z} \vartheta + \mbb\varepsilon^Y$, where $\vartheta \in \R^{2p}$ with $\vartheta_j = (\beta^Y - \hat{\beta}^Y)_j$ for $j \leq p$ and $\vartheta_j = 0$ for $j>p$. We seek to bound $\|\check{\vartheta}\|_1$ where
\[
\check{\vartheta} \in \argmin_{b\in \R^{2p}} \{\|\check{\mb Y} - \check{\mb Z} b\|_2/\sqrt{n} + \lambda\| b \|_1\}.
\]
Now writing $\check{\sigma} = \|\check{\mb Y} - \check{\mb Z} \check{\vartheta}\|_2/\sqrt{n}$, we have that
\[
\check{\vartheta} \in \argmin_{\vartheta \in \R^{2p}} \{\|\check{\mb Y} - \check{\mb Z} b\|_2^2/(2n) + \lambda\check{\sigma}\| b \|_1\}.
\]
This may be seen from examining the KKT conditions of each of the optimisations, which are identical, and take the form
\[
\frac{1}{n} \check{\mb Z}^T(\check{\mb Y} - \check{\mb Z} \check{\vartheta}) = \lambda \check{\sigma} \nu,
\]
where $\|\nu\|_\infty \leq 1$ and $\nu_ j =\sgn(\check{\vartheta}_j)$ for all $j$ such that $\check{\vartheta}_j \neq 0$. Dotting both sides with $\vartheta - \check{\vartheta}$, we obtain
\begin{equation} \label{eq:basic}
	\frac{1}{n} \|\check{\mb Z}(\vartheta - \check{\vartheta})\|_2^2 + \lambda \check{\sigma} \|\check{\vartheta}\|_1 \leq \lambda \check{\sigma}\|\vartheta\|_1  +\frac{1}{n} \|\vartheta - \check{\vartheta}\|_1 \|\check{\mb Z}^T \mbb\varepsilon^Y\|_\infty
\end{equation}
where we have used H\"older's inequality to bound $|\nu^T \vartheta| \leq \|\nu\|_\infty \|\vartheta\|_1 \leq \|\vartheta\|_1$ and  $|\check{\mb Z}(\vartheta - \check{\vartheta})^T\mbb\varepsilon^Y| \leq \|\vartheta - \check{\vartheta}\|_1 \|\check{\mb Z}^T \mbb\varepsilon^Y\|_\infty$, and also the fact that $\check{\vartheta}^T\nu = \|\check{\vartheta}\|_1$.
We now aim to show that with high probability,
\begin{equation} \label{eq:a_bd}
	\frac{\|\check{\mb Z}^T \mbb\varepsilon^Y\|_\infty}{n\check{\sigma}} < a \lambda
\end{equation}
for a constant $a < 1$, where recall that $\lambda= A \sqrt{2\log(p)/n}$ with $A > 1$. We would then have from \eqref{eq:basic} that on the event in question,
\[
\|\check{\vartheta}\|_1 \leq \|\vartheta\|_1 + a \|\vartheta - \check{\vartheta}\|_1 \leq (1+a)\|\vartheta\|_1 + a\|\check{\vartheta}\|_1, 
\]
by the triangle inequality, whence
\[
\|\check{\vartheta}\|_1 \leq \frac{1+a}{1-a} \|\vartheta\|_1 = \frac{1+a}{1-a} \|\beta^Y - \hat{\beta}^Y\|_1,
\]
giving the result.

We first observe that by Lemma~2 of \citet{Belloni2011} and also equation (13) therein, for any $B>1$,
\begin{equation} \label{eq:sqrt_lasso_bd}
	\pr\left(\frac{\|\check{\mb Z}^T\mbb\varepsilon^Y\|_\infty / n}{\|\mbb\varepsilon^Y\|_2/\sqrt{n}} \leq B\sqrt{\frac{2\log p}{n}}\right) \to 1.
\end{equation}
Now by Lemma~3.1 of \citet{van2016estimation},
writing
\[
\hat{\delta} := 2\sqrt{\left(\frac{\lambda \|\beta^Y - \hat{\beta}^Y\|_1}{2\|\mbb\varepsilon^Y\|_2/\sqrt{n}} + 1\right)^2 -1},
\]
we have that the event
\begin{equation*}
	\Omega_{1n} := \left\{\frac{\|\check{\mb Z}^T\mbb\varepsilon^Y\|_\infty / n}{\|\mbb\varepsilon^Y\|_2/\sqrt{n}} \leq (1-\hat{\delta})\lambda\right\}
\end{equation*}
satisfies $\Omega_{1n} \subseteq \Omega_{2n}$ given by
\[
\Omega_{2n} := \left\{\frac{\|\mbb\varepsilon^Y\|_2 / \sqrt{n}}{\check{\sigma}} \leq \frac{1}{1-\hat{\delta}}\right\}.
\]
Next for any $1\geq a > 1/A$, we have writing
\[
\Omega_{3n} := \left\{\frac{\|\check{\mb Z}^T\mbb\varepsilon^Y\|_\infty / n}{\|\mbb\varepsilon^Y\|_2/\sqrt{n}} \leq a(1-\hat{\delta})\lambda\right\},
\]
that $\Omega_{3n} \subseteq \Omega_{1n}$. Thus on $\Omega_{3n}$ we have that \eqref{eq:a_bd} holds.

Now by the weak law of large numbers and the continuous mapping theorem, $\|\mbb\varepsilon^Y\|_2 / \sqrt{n} \inprob \sigma$. Moreover $\lambda \|\beta^Y - \hat{\beta}^Y\|_1 \inprob 0$ due to (Y3) and (Y4). Thus from \eqref{eq:sqrt_lasso_bd} we see that $\pr(\Omega_{3n}) \to 1$, proving the first part of the result. 
The second part of the result is an easy consequence of the first and follows from the same arguments as used to prove Theorem~\ref{thm:DEF_lin}. \qed

\section{Computation of the square-root Lasso} \label{sec:square-root_Lasso}
Here we explain how the square-root Lasso
\begin{equation} \label{eq:sqrtlasso}
	\hat{\beta}^{\sq}_\lambda := \argmin_{\beta \in \R^p} \left\{\frac{1}{\sqrt{n}} \|\mb Y - \mb Z \beta\|_2 + \lambda \|\beta\|_1 \right\}
\end{equation}
may be computed easily given regular Lasso solutions
\begin{equation} \label{eq:lasso}
	\hat{\beta}^{\re}_\gamma := \argmin_{\beta \in \R^p} \left\{\frac{1}{2n} \|\mb Y - \mb Z \beta\|_2^2 + \gamma \|\beta\|_1 \right\}.
\end{equation}
As we will see, a square-root Lasso solution path may be derived from any Lasso solution path via a non-decreasing reparametrisation of the tuning parameter.

Now the minimisers $\hat{\beta}^{\sq}(\lambda)$ and $\hat{\beta}^{\re}(\gamma)$ need not be unique,
but the fitted values $\mb Z \hat{\beta}^{\re}(\gamma) $ of the regular Lasso are always unique.
To see this, observe that fixing $\gamma \geq 0$ and taking $\beta^{(1)}$ and $\beta^{(2)}$ as two solutions to \eqref{eq:lasso} achieving minimum value $c^*$, we have due to the triangle inequality and strict convexity of $\| \cdot \|_2^2$ that
\begin{align}
	c^* &\leq \frac{1}{2n} \|\mb Y - \mb Z (\beta^{(1)} + \beta^{(2)})/2\|_2^2 + \lambda( \|(\beta^{(1)}+\beta^{(2)})/2\|_1 ) \notag\\
	&\leq \frac{1}{2} \frac{1}{2n} (\|\mb Y - \mb Z \beta^{(1)}\|_2^2 + \|\mb Y - \mb Z \beta^{(2)}\|_2^2)  \\
	& \;\;\;\; + \frac{\lambda}{2}( \|\beta^{(1)}\|_1 + \|\beta^{(2)}\|_1 ) = c^*. \notag 
\end{align}
Thus equality must hold throughout, which can only be the case if $\beta^{(1)} = \beta^{(2)}$.

Let us write
\begin{align*}
	\hat{\sigma}^{\sq}_\lambda &:= \frac{1}{\sqrt{n}} \| \mb Y -\mb Z\hat{\beta}^{\sq}_\lambda\|_2, \\
	\hat{\sigma}^{\re}_\gamma &:= \frac{1}{\sqrt{n}} \| \mb Y -\mb Z\hat{\beta}^{\re}_\gamma\|_2;
\end{align*}
note that as the fitted values are unique, the latter is uniquely defined though the former may not be.

To establish the relationship between the Lasso and square-root Lasso solutions, observe that the KKT conditions of \eqref{eq:sqrtlasso} and \eqref{eq:lasso} are given by
\begin{align*}
	\frac{1}{n\hat{\sigma}^{\sq}_\lambda} \mb Z^T(\mb Y-\mb Z\hat{\beta}_\lambda^{\sq}) &= \lambda \hat{\nu}^\sq_\lambda,\\
	\frac{1}{n} \mb Z^T(\mb Y-\mb Z\hat{\beta}^{\re}_\gamma) &= \gamma \hat{\nu}^\re_\gamma,
\end{align*}
where $\|\hat{\nu}^\sq_\lambda\|_\infty \leq 1$, and $\hat{\nu}^\sq_\lambda$ agrees in sign with $\hat{\beta}_\lambda^{\sq}$ on its active set (and similarly for $\hat{\nu}^\re_\gamma$), provided $\hat{\sigma}^{\sq}_\lambda > 0$.
Comparing the KKT conditions above, we see that any Lasso solution $\hat{\beta}^\re_\gamma$ is a square-root Lasso solution $\hat{\beta}^\sq_\lambda$ with $\lambda = \gamma / \hat{\sigma}^\re_\gamma$ (provided $\hat{\sigma}^\re_\gamma > 0$). Conversely, any square-root Lasso solution $\hat{\beta}^\sq_\lambda$ is equal to a Lasso solution $\hat{\beta}^\re_\gamma$ with $\gamma = \lambda \hat{\sigma}^\sq_\lambda$, provided $\hat{\sigma}^{\sq}_\lambda > 0$.

\begin{lem} \label{lem:sqrt_lasso}
	Let $\gamma^*$ be maximal such that $\hat{\sigma}^{\re}_{\gamma^*}=0$.
	The function $\gamma \mapsto \gamma / \hat{\sigma}^{\re}_\gamma$ defined on $(\gamma^*, \infty)$ is non-decreasing.
\end{lem}

The result above shows that given a square-root Lasso tuning parameter $\lambda$, we may find via a bisection search the Lasso tuning parameter $\gamma$ such that $ \gamma / \hat{\sigma}^{\re}_\gamma = \lambda$ and thereby obtain a square-root Lasso solution.

\subsection{Proof of~Lemma~\ref{lem:sqrt_lasso}}
The conclusion is equivalent to the following: for any $\gamma_1, \gamma_2 \in (\gamma^*, \infty)$ with
\[
\lambda_1:=\frac{\gamma_1}{\hat{\sigma}^{\re}_{\gamma_1}} < \frac{\gamma_2}{\hat{\sigma}^{\re}_{\gamma_2}}=:\lambda_2,
\]
we have $\gamma_1 < \gamma_2$.
Let us write $\beta^{(1)} = \hat{\beta}^{\re}_{\gamma_1}$ and $\beta^{(2)} = \hat{\beta}^{\re}_{\gamma_2}$, noting that whilst these need not be unique, the corresponding fitted values and $\ell_1$-norms are. Then as $\beta^{(1)}$ and $\beta^{(2)}$ are square-root Lasso solutions at $\lambda_1$ and $\lambda_2$ respectively, we have that
\begin{align}
	& \frac{1}{\sqrt{n}} \|\mb Y - \mb Z \beta^{(1)} \|_2 + \lambda_1 \|\beta^{(1)}\|_1 \notag \\
	\leq & \frac{1}{\sqrt{n}} \|\mb Y - \mb Z \beta^{(2)} \|_2 + \lambda_1 \|\beta^{(2)}\|_1 \label{eq:eq1} \\
	&\frac{1}{\sqrt{n}} \|\mb Y - \mb Z \beta^{(2)} \|_2 + \lambda_2\|\beta^{(2)}\|_1 \notag \\
	\leq & \frac{1}{\sqrt{n}} \|\mb Y - \mb Z \beta^{(1)}\|_2 + \lambda_2 \|\beta^{(1)}\|_1. \notag
\end{align}
Adding these inequalities, we deduce that
\[
\lambda_1 \|\beta^{(1)}\|_1 + \lambda_2 \|\beta^{(2)}\|_1 \leq  \lambda_1 \|\beta^{(2)}\|_1 + \lambda_2 \|\beta^{(1)}\|_1.
\]
Rearranging, we obtain
\[
(\lambda_2 - \lambda_1)(\|\beta^{(2)}\|_1 - \|\beta^{(1)}\|_1) \geq 0,
\]
and so dividing by $\lambda_2 - \lambda_1 >0$ we conclude that $\|\beta^{(2)}\|_1 \geq \|\beta^{(1)}\|_1$. Substituting this into \eqref{eq:eq1}, we see that $\hat{\sigma}^{\re}_{\gamma_1} \leq \hat{\sigma}^{\re}_{\gamma_2}$, so $\gamma_1 = \hat{\sigma}^{\re}_{\gamma_1} \lambda_1 < \hat{\sigma}^{\re}_{\gamma_2} \lambda_2 = \gamma_2$ as required. \qed

\section{Confidence regions for $w^T\beta^0$} \label{sec:pred_int}
In this section we consider a linear model $\mb Y = \mb Z \beta^0 + \mbb\varepsilon$ and consider the problem of finding a confidence interval for $w^T\beta^0$ for a given $w \in \R^p$. When $w = e_j$ for a standard basis vector $e_j \in \R^p$, the methodology set out in Section~\ref{sec:high_conf} may be used to obtain a confidence region even in the case where only a partially linear model holds. For more general $w$, these methods must be adapted and here we will need to assume the linear model above holds with $\beta^0$ sufficiently sparse. We describe these modifications below.

First consider testing a null hypothesis $H_0$: $w^T\beta^0=0$. Let $P=ww^T / \|w\|_2^2$. Note $w^T\beta^0=0$ if and only if $(I-P)\beta^0 = \beta^0$, so the null model may be expressed as
\begin{equation} \label{eq:pred_mod1}
	\mb Y = (I-P)\mb Z\beta^0 + \mbb\varepsilon.
\end{equation}
Let
\begin{equation} \label{eq:Y_pred_reg}
	\hat{\beta} = \argmin_{\beta \in \R^p} \left\{  \|\mb Y - \mb Z (I-P)\beta\|_2 / \sqrt{n} + \lambda \|\beta\|_1\right\}.
\end{equation}
Note that under $H_0$ we should have
\[
\|\hat{\beta} - \beta^0\|_1 = O_{\pr}(s \sqrt{\log(p)/n})
\]
for $\lambda =A \sqrt{2\log(p)/n}$ with $A>1$ and where $s = |\{j : \beta^0_j \neq 0\}|$.
Also let $\mb R \in \R^n$ be the vector of residuals from the regression
\begin{equation} \label{eq:w_est}
	\argmin_{\beta\in \R^p} \left\{ \|\mb Z w - \mb Z (I-P) \beta\|_2 / \sqrt{n} + \lambda \|\beta\|_1\right\}.
\end{equation}
Note that $\mb R$ thus defined enjoys a near-orthogonality property of the form $(I-P)\mb Z^T \mb R / \|\mb R\|_2 \leq \sqrt{n} \lambda$. The reason for aiming to orthogonalise $\mb Z w$ is that were we to have $w^T\beta^0 \neq 0$, the residuals from the regression \eqref{eq:Y_pred_reg} should have expectation close to $\mb Z P \beta^0 \propto \mb Z w$. Thus a test statistic involving dotting these residuals with something close to the direction of $\mb Z w$ should be large in magnitude under an alternative.

We thus consider the test statistic given by
\begin{equation} \label{eq:predT}
	T = \sqrt{n}\frac{\mb R^T\{\mb Y - \mb Z (I-P)\hat{\beta}\}}{\|\mb R\|_2\|\mb Y - \mb Z (I-P)\hat{\beta}\|_2} .
\end{equation}
Writing $\hat{\sigma} = \|\mb Y - \mb Z(I-P)\hat{\beta}\|_2 / \sqrt{n}$, we have
\begin{align*}
	T &= \frac{1}{\hat{\sigma}}\frac{\mb R^T}{\|\mb R\|_2} \mbb\varepsilon  + \frac{1}{\hat{\sigma}} (\hat{\beta}-\beta^0)^T(I-P)\mb Z^T \frac{\mb R}{\|\mb R\|_2}\\
	&=: \text{(i)}+\text{(ii)}.
\end{align*}
Term (i) will be well-approximated by a standard normal under reasonable conditions, and term (ii) may be bounded in absolute value using an argument similar to that presented in Section~\ref{sec:high_lin}.
Thus under appropriate conditions, we will have $T \indist \mathcal{N}(0, 1)$.

Now consider testing $H_0(t)$: $w^T\beta^0 = t$. Observe that
\[
\mb Y - t \mb Z w / \|w\|_2^2 = \mb Z \beta^0 - \mb Z P \beta^0 + \mbb\varepsilon =: \mb Y^{(t)},
\]
so the new response $\mb Y^{(t)}$ respects the null model \eqref{eq:pred_mod1}. We may thus test $H_0(t)$ using test statistic $T_t$ defined as in \eqref{eq:predT} but computed using the response $\mb Y^{(t)}$ in place of $\mb Y$.

Then to form a $1-\alpha$ confidence region for $w^T\beta^0$ we can simply invert the tests as in Section~\ref{sec:high_conf}:
\[
R_\alpha := \{t \in \R : |T_{t}| \geq z_{\alpha}\}.
\]
Provided $\pr( H_0(w^T\beta^0) \text{ rejected}) \geq 1-\alpha$, the confidence region $R_\alpha$ will satisfy $\pr(w^T\beta^0 \in R_\alpha) \geq 1-\alpha$; see Corollary~\ref{cor:conf}.

We note that compared to the confidence regions constructed in \citet{cai2017confidence}, which are introduced primarily for theoretical purposes, our confidence region does not require prior knowledge of the the inverse covariance of $\mb Z$, the sparsity of $\beta^0$, or the noise level $\Var(\varepsilon_1)$. Our construction is related to that in \citet{zhu2018linear}, but where we require sparsity of $\beta^0$, \citet{zhu2018linear} instead require sparsity of a projection of the quantity `estimated' by the minimiser in \eqref{eq:w_est}. In fact, with such an assumption, it is straightforward to see that we can still expect $T$ to have an asymptotically normal distribution regardless of the sparsity of $\beta^0$ by reversing the roles of \eqref{eq:Y_pred_reg} and \eqref{eq:w_est}: we only use the former to establish approximate orthogonality while we exploit assumed small estimation error of the latter.
An additional difference is that the approach in \citet{zhu2018linear} requires solving a family of large-scale linear programs, whereas our region requires only standard software for computing the Lasso.

\section{Additional numerical results} \label{sec:add_exp}
Here we present the results of analogous numerical experiments to those described in Section~\ref{sec:exp_lin}, but with the multivariate distribution $P$ used for generating predictors $(X_i, Z_i)$ replaced with a multivariate Gaussian distribution $\mathcal{N}_p(0, \Sigma)$. We take the covariance matrix $\Sigma$ to have a Toeplitz design with $\Sigma_{jk} = 0.9^{|j-k|}$.  Note that the inverse of $\Sigma$ is tridiagonal and so the $X$-model is a sparse linear model (with sparsity level $s_X=1$). The settings considered here thus satisfy the conditions of Theorem~\ref{thm:DEF_lin}.

We see that compared to the more challenging settings investigated in Section~\ref{sec:exp_lin}, the coverage properties of both confidence interval construction methods are improved; however the debiased Lasso still undercovers whilst the DEF confidence intervals reach a coverage of closer to 95\%.
\begin{figure*}
	\centering
	\includegraphics[width=\textwidth]{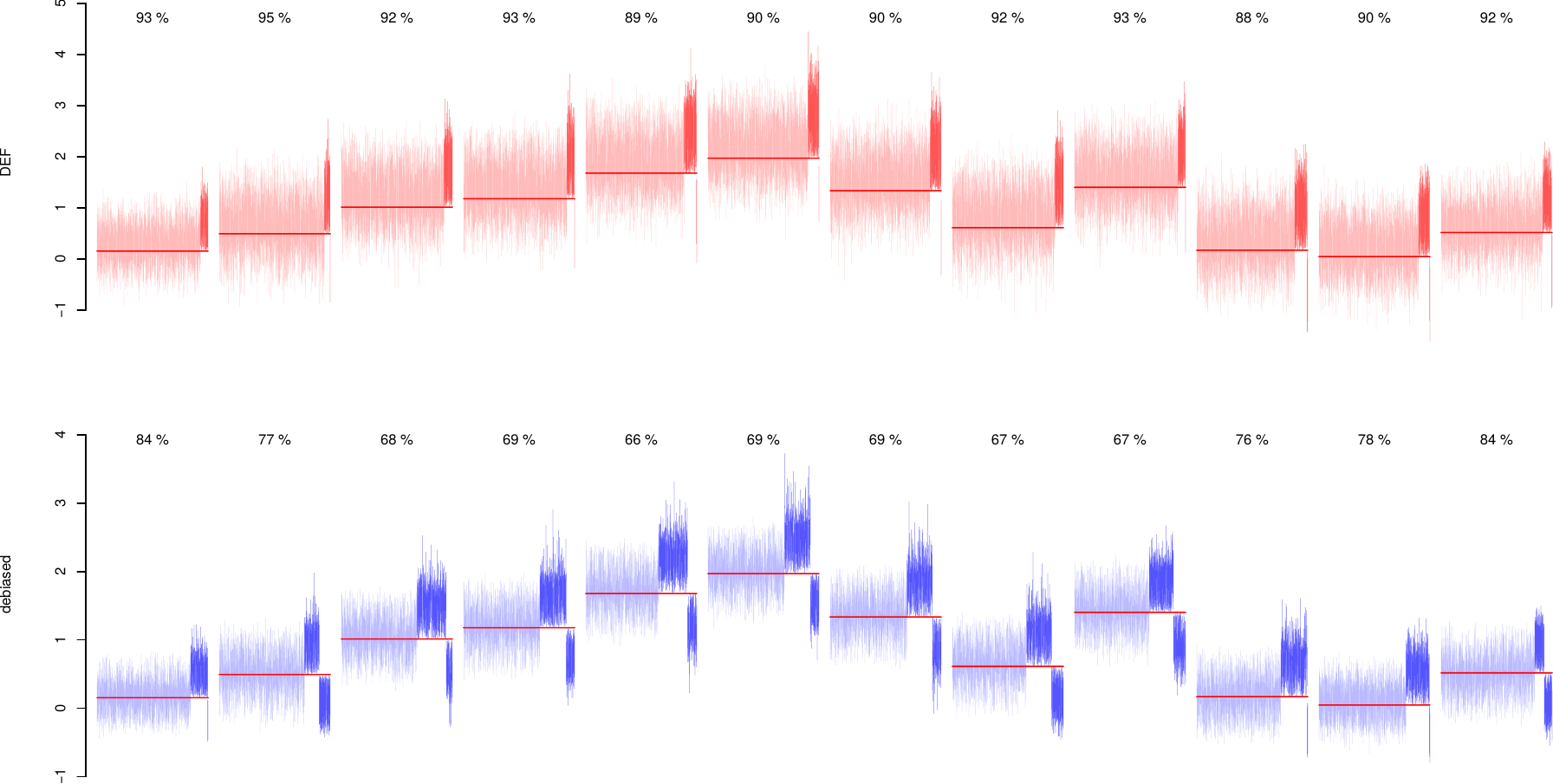}
	\caption{The linear setting (a) with Toeplitz design; the interpretation is similar to that of Figure~\ref{fig:n1_real_lin}.\label{fig:n1_toep_nonlin1}}
\end{figure*}
\begin{figure*} 
	\centering
	\includegraphics[width=\textwidth]{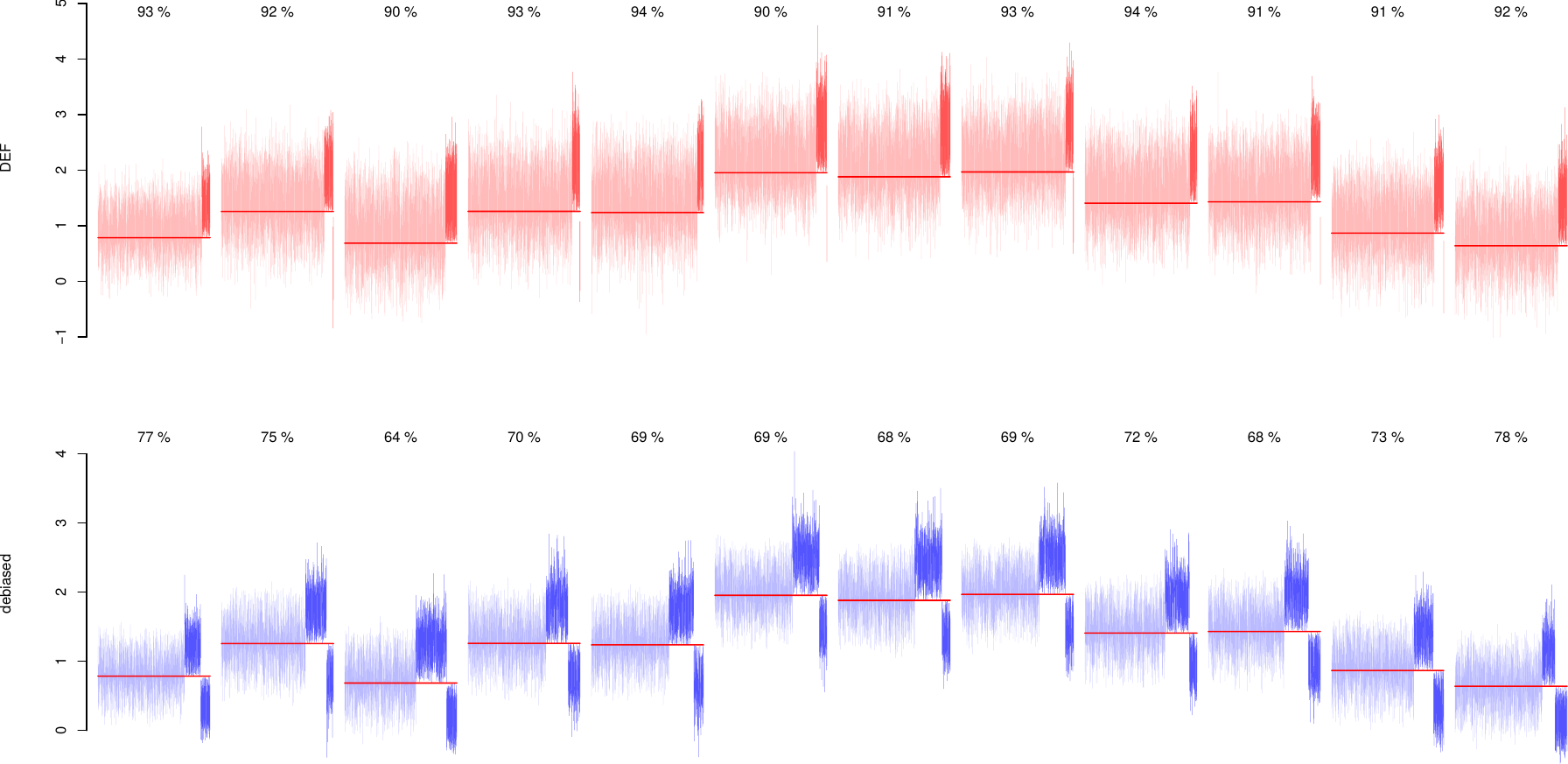}
	\caption{The slightly nonlinear setting (b) with Toeplitz design; the interpretation is similar to that of Figure~ \ref{fig:n1_real_lin}.\label{fig:n1_toep[_nonlin1}}
\end{figure*}
\begin{figure*} 
	\centering
	\includegraphics[width=\textwidth]{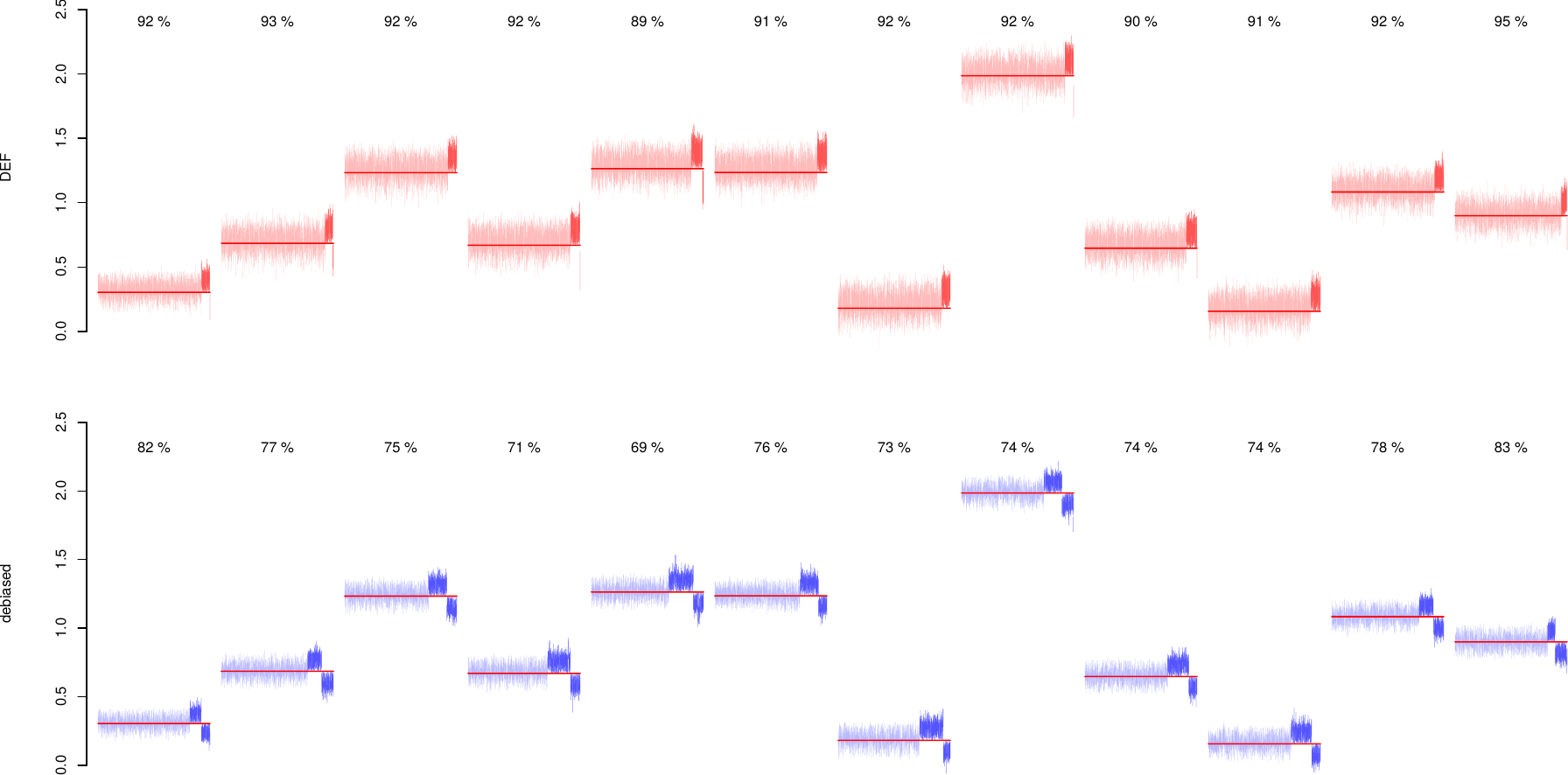}
	\caption{The highly nonlinear setting (c) with Toeplitz design; the interpretation is similar to that of Figure~\ref{fig:n1_real_lin}.\label{fig:n1_toep_nonlin2}}
\end{figure*}
\end{appendix}

\begin{acks}[Acknowledgments]
The authors would like to thank Nicolai Meinshausen for many helpful conversations, and also coining the term ``double estimation friendly''.
\end{acks}
\begin{funding}
 The first author was supported by an EPSRC Programme Grant EP/N031938/1 and an EPSRC First Grant EP/R013381/1.

 The second author was supported by the European Research Council
 under the Grant Agreement No 786461 (CausalStats - ERC-2017-ADG).
\end{funding}



\bibliographystyle{imsart-nameyear.bst} 
\bibliography{references}       

\begin{thebibliography}{48}

\bibitem[\protect\citeauthoryear{Bang and Robins}{2005}]{bangrobins2005}
\begin{barticle}[author]
\bauthor{\bsnm{Bang},~\bfnm{Heejung}\binits{H.}} \AND
  \bauthor{\bsnm{Robins},~\bfnm{James~M.}\binits{J.~M.}}
(\byear{2005}).
\btitle{Doubly Robust Estimation in Missing Data and Causal Inference Models}.
\bjournal{Biometrics}
\bvolume{61}
\bpages{962--973}.
\end{barticle}
\endbibitem

\bibitem[\protect\citeauthoryear{Belloni, Chernozhukov and
  Wang}{2011}]{Belloni2011}
\begin{barticle}[author]
\bauthor{\bsnm{Belloni},~\bfnm{Alexandre}\binits{A.}},
  \bauthor{\bsnm{Chernozhukov},~\bfnm{Victor}\binits{V.}} \AND
  \bauthor{\bsnm{Wang},~\bfnm{Lie}\binits{L.}}
(\byear{2011}).
\btitle{Square-root lasso: pivotal recovery of sparse signals via conic
  programming}.
\bjournal{Biometrika}
\bvolume{98}
\bpages{791--806}.
\end{barticle}
\endbibitem

\bibitem[\protect\citeauthoryear{Box}{1976}]{box1976science}
\begin{barticle}[author]
\bauthor{\bsnm{Box},~\bfnm{George~EP}\binits{G.~E.}}
(\byear{1976}).
\btitle{Science and statistics}.
\bjournal{Journal of the American Statistical Association}
\bvolume{71}
\bpages{791--799}.
\end{barticle}
\endbibitem

\bibitem[\protect\citeauthoryear{Bradic, Wager and
  Zhu}{2019}]{bradic2019sparsity}
\begin{barticle}[author]
\bauthor{\bsnm{Bradic},~\bfnm{Jelena}\binits{J.}},
  \bauthor{\bsnm{Wager},~\bfnm{Stefan}\binits{S.}} \AND
  \bauthor{\bsnm{Zhu},~\bfnm{Yinchu}\binits{Y.}}
(\byear{2019}).
\btitle{Sparsity Double Robust Inference of Average Treatment Effects}.
\bjournal{arXiv preprint arXiv:1905.00744}.
\end{barticle}
\endbibitem

\bibitem[\protect\citeauthoryear{Brillinger}{1983}]{MR689741}
\begin{bincollection}[author]
\bauthor{\bsnm{Brillinger},~\bfnm{David~R.}\binits{D.~R.}}
(\byear{1983}).
\btitle{A generalized linear model with ``{G}aussian'' regressor variables}.
In \bbooktitle{A {F}estschrift for {E}rich {L}. {L}ehmann}.
\bseries{Wadsworth Statist./Probab. Ser.}
\bpages{97--114}.
\bpublisher{Wadsworth, Belmont, CA}.
\bmrnumber{689741}
\end{bincollection}
\endbibitem

\bibitem[\protect\citeauthoryear{B{\"u}hlmann, Kalisch and
  Meier}{2014}]{Buehlmann2014}
\begin{barticle}[author]
\bauthor{\bsnm{B{\"u}hlmann},~\bfnm{Peter}\binits{P.}},
  \bauthor{\bsnm{Kalisch},~\bfnm{Markus}\binits{M.}} \AND
  \bauthor{\bsnm{Meier},~\bfnm{Lukas}\binits{L.}}
(\byear{2014}).
\btitle{High-dimensional statistics with a view toward applications in
  biology}.
\bjournal{Annual Review of Statistics and Its Application}
\bvolume{1}
\bpages{255--278}.
\end{barticle}
\endbibitem

\bibitem[\protect\citeauthoryear{B\"uhlmann and {van de
  Geer}}{2011a}]{buhlmann2011statistics}
\begin{bbook}[author]
\bauthor{\bsnm{B\"uhlmann},~\bfnm{P.}\binits{P.}} \AND \bauthor{\bsnm{{van de
  Geer}},~\bfnm{S.}\binits{S.}}
(\byear{2011}a).
\btitle{{Statistics for high-dimensional data: methods, theory and
  applications}}.
\bpublisher{Springer}.
\end{bbook}
\endbibitem

\bibitem[\protect\citeauthoryear{B\"uhlmann and {van de
  Geer}}{2011b}]{BuhlmannGeer2009}
\begin{barticle}[author]
\bauthor{\bsnm{B\"uhlmann},~\bfnm{P.}\binits{P.}} \AND \bauthor{\bsnm{{van de
  Geer}},~\bfnm{S.~A.}\binits{S.~A.}}
(\byear{2011}b).
\btitle{{On the conditions used to prove oracle results for the lasso}}.
\bjournal{Electronic Journal of Statistics}
\bvolume{3}
\bpages{1360-1392}.
\end{barticle}
\endbibitem

\bibitem[\protect\citeauthoryear{B{\"u}hlmann et~al.}{2015}]{buhlmann2015high}
\begin{barticle}[author]
\bauthor{\bsnm{B{\"u}hlmann},~\bfnm{Peter}\binits{P.}},
  \bauthor{\bparticle{van~de} \bsnm{Geer},~\bfnm{Sara}\binits{S.}}
  \betal{et~al.}
(\byear{2015}).
\btitle{High-dimensional inference in misspecified linear models}.
\bjournal{Electronic Journal of Statistics}
\bvolume{9}
\bpages{1449--1473}.
\end{barticle}
\endbibitem

\bibitem[\protect\citeauthoryear{Buja et~al.}{2019a}]{bujamodelsI}
\begin{barticle}[author]
\bauthor{\bsnm{Buja},~\bfnm{Andreas}\binits{A.}},
  \bauthor{\bsnm{Brown},~\bfnm{Lawrence}\binits{L.}},
  \bauthor{\bsnm{Berk},~\bfnm{Richard}\binits{R.}},
  \bauthor{\bsnm{George},~\bfnm{Edward}\binits{E.}},
  \bauthor{\bsnm{Pitkin},~\bfnm{Emil}\binits{E.}},
  \bauthor{\bsnm{Traskin},~\bfnm{Mikhail}\binits{M.}},
  \bauthor{\bsnm{Zhang},~\bfnm{Kai}\binits{K.}} \AND
  \bauthor{\bsnm{Zhao},~\bfnm{Linda}\binits{L.}}
(\byear{2019}a).
\btitle{Models as approximations I: Consequences illustrated with linear
  regression}.
\bjournal{Statistical Science}
\bvolume{34}
\bpages{523--544}.
\end{barticle}
\endbibitem

\bibitem[\protect\citeauthoryear{Buja et~al.}{2019b}]{bujamodelsII}
\begin{barticle}[author]
\bauthor{\bsnm{Buja},~\bfnm{Andreas}\binits{A.}},
  \bauthor{\bsnm{Brown},~\bfnm{Lawrence}\binits{L.}},
  \bauthor{\bsnm{Kuchibhotla},~\bfnm{Arun~Kumar}\binits{A.~K.}},
  \bauthor{\bsnm{Berk},~\bfnm{Richard}\binits{R.}},
  \bauthor{\bsnm{George},~\bfnm{Edward}\binits{E.}} \AND
  \bauthor{\bsnm{Zhao},~\bfnm{Linda}\binits{L.}}
(\byear{2019}b).
\btitle{Models as approximations ii: A model-free theory of parametric
  regression}.
\bjournal{Statistical Science}
\bvolume{34}
\bpages{545--565}.
\end{barticle}
\endbibitem

\bibitem[\protect\citeauthoryear{Cai and Guo}{2017}]{cai2017confidence}
\begin{barticle}[author]
\bauthor{\bsnm{Cai},~\bfnm{T~Tony}\binits{T.~T.}} \AND
  \bauthor{\bsnm{Guo},~\bfnm{Zijian}\binits{Z.}}
(\byear{2017}).
\btitle{Confidence intervals for high-dimensional linear regression: Minimax
  rates and adaptivity}.
\bjournal{Annals of Statistics}
\bvolume{45}
\bpages{615--646}.
\end{barticle}
\endbibitem

\bibitem[\protect\citeauthoryear{Candes et~al.}{2018}]{candes2018panning}
\begin{barticle}[author]
\bauthor{\bsnm{Candes},~\bfnm{Emmanuel}\binits{E.}},
  \bauthor{\bsnm{Fan},~\bfnm{Yingying}\binits{Y.}},
  \bauthor{\bsnm{Janson},~\bfnm{Lucas}\binits{L.}} \AND
  \bauthor{\bsnm{Lv},~\bfnm{Jinchi}\binits{J.}}
(\byear{2018}).
\btitle{Panning for gold:‘model-X’knockoffs for high dimensional controlled
  variable selection}.
\bjournal{Journal of the Royal Statistical Society, Series B}
\bvolume{80}
\bpages{551--577}.
\end{barticle}
\endbibitem

\bibitem[\protect\citeauthoryear{Cao, Tsiatis and
  Davidian}{2009}]{cao2009improving}
\begin{barticle}[author]
\bauthor{\bsnm{Cao},~\bfnm{Weihua}\binits{W.}},
  \bauthor{\bsnm{Tsiatis},~\bfnm{Anastasios~A}\binits{A.~A.}} \AND
  \bauthor{\bsnm{Davidian},~\bfnm{Marie}\binits{M.}}
(\byear{2009}).
\btitle{Improving efficiency and robustness of the doubly robust estimator for
  a population mean with incomplete data}.
\bjournal{Biometrika}
\bvolume{96}
\bpages{723--734}.
\end{barticle}
\endbibitem

\bibitem[\protect\citeauthoryear{Chernozhukov
  et~al.}{2018}]{chernozhukov2016double}
\begin{barticle}[author]
\bauthor{\bsnm{Chernozhukov},~\bfnm{Victor}\binits{V.}},
  \bauthor{\bsnm{Chetverikov},~\bfnm{Denis}\binits{D.}},
  \bauthor{\bsnm{Demirer},~\bfnm{Mert}\binits{M.}},
  \bauthor{\bsnm{Duflo},~\bfnm{Esther}\binits{E.}},
  \bauthor{\bsnm{Hansen},~\bfnm{Christian}\binits{C.}},
  \bauthor{\bsnm{Newey},~\bfnm{Whitney}\binits{W.}} \AND
  \bauthor{\bsnm{Robins},~\bfnm{James}\binits{J.}}
(\byear{2018}).
\btitle{{Double/debiased machine learning for treatment and structural
  parameters}}.
\bjournal{The Econometrics Journal}
\bvolume{21}
\bpages{C1-C68}.
\end{barticle}
\endbibitem

\bibitem[\protect\citeauthoryear{Dezeure et~al.}{2015}]{dezeure2015high}
\begin{barticle}[author]
\bauthor{\bsnm{Dezeure},~\bfnm{Ruben}\binits{R.}},
  \bauthor{\bsnm{B{\"u}hlmann},~\bfnm{Peter}\binits{P.}},
  \bauthor{\bsnm{Meier},~\bfnm{Lukas}\binits{L.}},
  \bauthor{\bsnm{Meinshausen},~\bfnm{Nicolai}\binits{N.}} \betal{et~al.}
(\byear{2015}).
\btitle{High-Dimensional Inference: Confidence Intervals, $ p $-Values and
  R-Software hdi}.
\bjournal{Statistical Science}
\bvolume{30}
\bpages{533--558}.
\end{barticle}
\endbibitem

\bibitem[\protect\citeauthoryear{Duan and Li}{1991}]{MR1105834}
\begin{barticle}[author]
\bauthor{\bsnm{Duan},~\bfnm{Naihua}\binits{N.}} \AND
  \bauthor{\bsnm{Li},~\bfnm{Ker-Chau}\binits{K.-C.}}
(\byear{1991}).
\btitle{Slicing regression: a link-free regression method}.
\bjournal{Annals of Statistics}
\bvolume{19}
\bpages{505--530}.
\bmrnumber{1105834}
\end{barticle}
\endbibitem

\bibitem[\protect\citeauthoryear{Dukes, Avagyan and
  Vansteelandt}{2020}]{dukes2018high}
\begin{barticle}[author]
\bauthor{\bsnm{Dukes},~\bfnm{Oliver}\binits{O.}},
  \bauthor{\bsnm{Avagyan},~\bfnm{Vahe}\binits{V.}} \AND
  \bauthor{\bsnm{Vansteelandt},~\bfnm{Stijn}\binits{S.}}
(\byear{2020}).
\btitle{Doubly robust tests of exposure effects under high-dimensional
  confounding}.
\bjournal{Biometrics}
\bvolume{76}
\bpages{1190--1200}.
\end{barticle}
\endbibitem

\bibitem[\protect\citeauthoryear{Efron et~al.}{2004}]{efron04least}
\begin{barticle}[author]
\bauthor{\bsnm{Efron},~\bfnm{B.}\binits{B.}},
  \bauthor{\bsnm{Hastie},~\bfnm{T.}\binits{T.}},
  \bauthor{\bsnm{Johnstone},~\bfnm{I.}\binits{I.}} \AND
  \bauthor{\bsnm{Tibshirani},~\bfnm{R.}\binits{R.}}
(\byear{2004}).
\btitle{{Least Angle Regression}}.
\bjournal{Annals of Statistics}
\bvolume{32}
\bpages{407-451}.
\end{barticle}
\endbibitem

\bibitem[\protect\citeauthoryear{Friedman, Hastie and
  Tibshirani}{2010}]{friedman2010regularization}
\begin{barticle}[author]
\bauthor{\bsnm{Friedman},~\bfnm{J.}\binits{J.}},
  \bauthor{\bsnm{Hastie},~\bfnm{T.}\binits{T.}} \AND
  \bauthor{\bsnm{Tibshirani},~\bfnm{R.}\binits{R.}}
(\byear{2010}).
\btitle{{Regularization paths for generalized linear models via coordinate
  descent}}.
\bjournal{Journal of Statistical Software}
\bvolume{33}
\bpages{1-22}.
\end{barticle}
\endbibitem

\bibitem[\protect\citeauthoryear{Huber}{1967}]{huber1967behavior}
\begin{binproceedings}[author]
\bauthor{\bsnm{Huber},~\bfnm{Peter~J}\binits{P.~J.}}
(\byear{1967}).
\btitle{The behavior of maximum likelihood estimates under nonstandard
  conditions}.
In \bbooktitle{Proceedings of the fifth Berkeley symposium on mathematical
  statistics and probability}
\bvolume{5}
\bpages{221--233}.
\bpublisher{University of California Press}.
\end{binproceedings}
\endbibitem

\bibitem[\protect\citeauthoryear{Jankov{\'a}
  et~al.}{2020}]{jankova2020goodness}
\begin{barticle}[author]
\bauthor{\bsnm{Jankov{\'a}},~\bfnm{Jana}\binits{J.}},
  \bauthor{\bsnm{Shah},~\bfnm{Rajen~D}\binits{R.~D.}},
  \bauthor{\bsnm{B{\"u}hlmann},~\bfnm{Peter}\binits{P.}} \AND
  \bauthor{\bsnm{Samworth},~\bfnm{Richard~J}\binits{R.~J.}}
(\byear{2020}).
\btitle{Goodness-of-fit testing in high dimensional generalized linear models}.
\bjournal{Journal of the Royal Statistical Society, Series B}
\bvolume{82}
\bpages{773--795}.
\end{barticle}
\endbibitem

\bibitem[\protect\citeauthoryear{Jennrich}{1969}]{jennrich1969asymptotic}
\begin{barticle}[author]
\bauthor{\bsnm{Jennrich},~\bfnm{Robert~I}\binits{R.~I.}}
(\byear{1969}).
\btitle{Asymptotic properties of non-linear least squares estimators}.
\bjournal{The Annals of Mathematical Statistics}
\bvolume{40}
\bpages{633--643}.
\end{barticle}
\endbibitem

\bibitem[\protect\citeauthoryear{Kang et~al.}{2007}]{kang2007demystifying}
\begin{barticle}[author]
\bauthor{\bsnm{Kang},~\bfnm{Joseph~DY}\binits{J.~D.}},
  \bauthor{\bsnm{Schafer},~\bfnm{Joseph~L}\binits{J.~L.}} \betal{et~al.}
(\byear{2007}).
\btitle{Demystifying double robustness: A comparison of alternative strategies
  for estimating a population mean from incomplete data}.
\bjournal{Statistical Science}
\bvolume{22}
\bpages{523--539}.
\end{barticle}
\endbibitem

\bibitem[\protect\citeauthoryear{Li and Duan}{1989}]{MR1015136}
\begin{barticle}[author]
\bauthor{\bsnm{Li},~\bfnm{Ker-Chau}\binits{K.-C.}} \AND
  \bauthor{\bsnm{Duan},~\bfnm{Naihua}\binits{N.}}
(\byear{1989}).
\btitle{Regression analysis under link violation}.
\bjournal{Annals of Statistics}
\bvolume{17}
\bpages{1009--1052}.
\bmrnumber{1015136}
\end{barticle}
\endbibitem

\bibitem[\protect\citeauthoryear{MacKinnon and White}{1985}]{mackinnon1985some}
\begin{barticle}[author]
\bauthor{\bsnm{MacKinnon},~\bfnm{James~G}\binits{J.~G.}} \AND
  \bauthor{\bsnm{White},~\bfnm{Halbert}\binits{H.}}
(\byear{1985}).
\btitle{Some heteroskedasticity-consistent covariance matrix estimators with
  improved finite sample properties}.
\bjournal{Journal of Econometrics}
\bvolume{29}
\bpages{305--325}.
\end{barticle}
\endbibitem

\bibitem[\protect\citeauthoryear{Ning and Liu}{2017}]{ning2017general}
\begin{barticle}[author]
\bauthor{\bsnm{Ning},~\bfnm{Yang}\binits{Y.}} \AND
  \bauthor{\bsnm{Liu},~\bfnm{Han}\binits{H.}}
(\byear{2017}).
\btitle{A general theory of hypothesis tests and confidence regions for sparse
  high dimensional models}.
\bjournal{Annals of Statistics}
\bvolume{45}
\bpages{158--195}.
\end{barticle}
\endbibitem

\bibitem[\protect\citeauthoryear{Ren et~al.}{2015}]{ren2015asymptotic}
\begin{barticle}[author]
\bauthor{\bsnm{Ren},~\bfnm{Zhao}\binits{Z.}},
  \bauthor{\bsnm{Sun},~\bfnm{Tingni}\binits{T.}},
  \bauthor{\bsnm{Zhang},~\bfnm{Cun-Hui}\binits{C.-H.}},
  \bauthor{\bsnm{Zhou},~\bfnm{Harrison~H}\binits{H.~H.}} \betal{et~al.}
(\byear{2015}).
\btitle{Asymptotic normality and optimalities in estimation of large Gaussian
  graphical models}.
\bjournal{Annals of Statistics}
\bvolume{43}
\bpages{991--1026}.
\end{barticle}
\endbibitem

\bibitem[\protect\citeauthoryear{Robins, Mark and
  Newey}{1992}]{robins1992estimating}
\begin{barticle}[author]
\bauthor{\bsnm{Robins},~\bfnm{James~M}\binits{J.~M.}},
  \bauthor{\bsnm{Mark},~\bfnm{Steven~D}\binits{S.~D.}} \AND
  \bauthor{\bsnm{Newey},~\bfnm{Whitney~K}\binits{W.~K.}}
(\byear{1992}).
\btitle{Estimating exposure effects by modelling the expectation of exposure
  conditional on confounders}.
\bjournal{Biometrics}
\bpages{479--495}.
\end{barticle}
\endbibitem

\bibitem[\protect\citeauthoryear{Robins and
  Rotnitzky}{1995}]{robins1995semiparametric}
\begin{barticle}[author]
\bauthor{\bsnm{Robins},~\bfnm{James~M}\binits{J.~M.}} \AND
  \bauthor{\bsnm{Rotnitzky},~\bfnm{Andrea}\binits{A.}}
(\byear{1995}).
\btitle{Semiparametric efficiency in multivariate regression models with
  missing data}.
\bjournal{Journal of the American Statistical Association}
\bvolume{90}
\bpages{122--129}.
\end{barticle}
\endbibitem

\bibitem[\protect\citeauthoryear{Rosenbaum and
  Rubin}{1983}]{rosenbaum1983central}
\begin{barticle}[author]
\bauthor{\bsnm{Rosenbaum},~\bfnm{Paul~R}\binits{P.~R.}} \AND
  \bauthor{\bsnm{Rubin},~\bfnm{Donald~B}\binits{D.~B.}}
(\byear{1983}).
\btitle{The central role of the propensity score in observational studies for
  causal effects}.
\bjournal{Biometrika}
\bvolume{70}
\bpages{41--55}.
\end{barticle}
\endbibitem

\bibitem[\protect\citeauthoryear{Rotnitzky
  et~al.}{2012}]{rotnitzky2012improved}
\begin{barticle}[author]
\bauthor{\bsnm{Rotnitzky},~\bfnm{Andrea}\binits{A.}},
  \bauthor{\bsnm{Lei},~\bfnm{Quanhong}\binits{Q.}},
  \bauthor{\bsnm{Sued},~\bfnm{Mariela}\binits{M.}} \AND
  \bauthor{\bsnm{Robins},~\bfnm{James~M}\binits{J.~M.}}
(\byear{2012}).
\btitle{Improved double-robust estimation in missing data and causal inference
  models}.
\bjournal{Biometrika}
\bvolume{99}
\bpages{439--456}.
\end{barticle}
\endbibitem

\bibitem[\protect\citeauthoryear{Scharfstein, Rotnitzky and
  Robins}{1999}]{scharfstein1999adjusting}
\begin{barticle}[author]
\bauthor{\bsnm{Scharfstein},~\bfnm{Daniel~O}\binits{D.~O.}},
  \bauthor{\bsnm{Rotnitzky},~\bfnm{Andrea}\binits{A.}} \AND
  \bauthor{\bsnm{Robins},~\bfnm{James~M}\binits{J.~M.}}
(\byear{1999}).
\btitle{Adjusting for nonignorable drop-out using semiparametric nonresponse
  models}.
\bjournal{Journal of the American Statistical Association}
\bvolume{94}
\bpages{1096--1120}.
\end{barticle}
\endbibitem

\bibitem[\protect\citeauthoryear{Shah and
  B{\"u}hlmann}{2018}]{shah2018goodness}
\begin{barticle}[author]
\bauthor{\bsnm{Shah},~\bfnm{Rajen~D}\binits{R.~D.}} \AND
  \bauthor{\bsnm{B{\"u}hlmann},~\bfnm{Peter}\binits{P.}}
(\byear{2018}).
\btitle{Goodness-of-fit tests for high dimensional linear models}.
\bjournal{Journal of the Royal Statistical Society, Series B}
\bvolume{80}
\bpages{113--135}.
\end{barticle}
\endbibitem

\bibitem[\protect\citeauthoryear{Shah and Peters}{2020}]{shah2018hardness}
\begin{barticle}[author]
\bauthor{\bsnm{Shah},~\bfnm{Rajen~D}\binits{R.~D.}} \AND
  \bauthor{\bsnm{Peters},~\bfnm{Jonas}\binits{J.}}
(\byear{2020}).
\btitle{The hardness of conditional independence testing and the generalised
  covariance measure}.
\bjournal{Annals of Statistics}
\bvolume{48}
\bpages{1514--1538}.
\end{barticle}
\endbibitem

\bibitem[\protect\citeauthoryear{{Smucler}, {Rotnitzky} and
  {Robins}}{2019}]{2019arXiv190403737S}
\begin{barticle}[author]
\bauthor{\bsnm{{Smucler}},~\bfnm{Ezequiel}\binits{E.}},
  \bauthor{\bsnm{{Rotnitzky}},~\bfnm{Andrea}\binits{A.}} \AND
  \bauthor{\bsnm{{Robins}},~\bfnm{James~M.}\binits{J.~M.}}
(\byear{2019}).
\btitle{{A unifying approach for doubly-robust $\ell_1$ regularized estimation
  of causal contrasts}}.
\bjournal{arXiv e-prints}
\bpages{arXiv:1904.03737}.
\end{barticle}
\endbibitem

\bibitem[\protect\citeauthoryear{Sun and Zhang}{2012}]{sun2012scaled}
\begin{barticle}[author]
\bauthor{\bsnm{Sun},~\bfnm{Tingni}\binits{T.}} \AND
  \bauthor{\bsnm{Zhang},~\bfnm{Cun-Hui}\binits{C.-H.}}
(\byear{2012}).
\btitle{Scaled sparse linear regression}.
\bjournal{Biometrika}
\bvolume{99}
\bpages{879--898}.
\end{barticle}
\endbibitem

\bibitem[\protect\citeauthoryear{Sun and Zhang}{2013}]{sun2013sparse}
\begin{barticle}[author]
\bauthor{\bsnm{Sun},~\bfnm{Tingni}\binits{T.}} \AND
  \bauthor{\bsnm{Zhang},~\bfnm{Cun-Hui}\binits{C.-H.}}
(\byear{2013}).
\btitle{Sparse matrix inversion with scaled lasso}.
\bjournal{Journal of Machine Learning Research}
\bvolume{14}
\bpages{3385--3418}.
\end{barticle}
\endbibitem

\bibitem[\protect\citeauthoryear{Tibshirani}{1996}]{tibshirani96regression}
\begin{barticle}[author]
\bauthor{\bsnm{Tibshirani},~\bfnm{R.}\binits{R.}}
(\byear{1996}).
\btitle{{Regression Shrinkage and Selection via the Lasso}}.
\bjournal{Journal of the Royal Statistical Society, Series B}
\bvolume{58}
\bpages{267-288}.
\end{barticle}
\endbibitem

\bibitem[\protect\citeauthoryear{Tibshirani, Wainwright and
  Hastie}{2015}]{tibshirani2015statistical}
\begin{bbook}[author]
\bauthor{\bsnm{Tibshirani},~\bfnm{Robert}\binits{R.}},
  \bauthor{\bsnm{Wainwright},~\bfnm{Martin}\binits{M.}} \AND
  \bauthor{\bsnm{Hastie},~\bfnm{Trevor}\binits{T.}}
(\byear{2015}).
\btitle{Statistical learning with sparsity: the lasso and generalizations}.
\bpublisher{Chapman and Hall/CRC}.
\end{bbook}
\endbibitem

\bibitem[\protect\citeauthoryear{van~de Geer}{2016}]{van2016estimation}
\begin{bbook}[author]
\bauthor{\bparticle{van~de} \bsnm{Geer},~\bfnm{Sara~A}\binits{S.~A.}}
(\byear{2016}).
\btitle{Estimation and testing under sparsity}.
\bpublisher{Springer}.
\end{bbook}
\endbibitem

\bibitem[\protect\citeauthoryear{van~de Geer et~al.}{2014}]{vandegeer2014}
\begin{barticle}[author]
\bauthor{\bparticle{van~de} \bsnm{Geer},~\bfnm{Sara}\binits{S.}},
  \bauthor{\bsnm{Bühlmann},~\bfnm{Peter}\binits{P.}},
  \bauthor{\bsnm{Ritov},~\bfnm{Ya’acov}\binits{Y.}} \AND
  \bauthor{\bsnm{Dezeure},~\bfnm{Ruben}\binits{R.}}
(\byear{2014}).
\btitle{On asymptotically optimal confidence regions and tests for
  high-dimensional models}.
\bjournal{Annals of Statistics}
\bvolume{42}
\bpages{1166--1202}.
\bdoi{10.1214/14-AOS1221}
\end{barticle}
\endbibitem

\bibitem[\protect\citeauthoryear{van~der Vaart}{2000}]{van2000asymptotic}
\begin{bbook}[author]
\bauthor{\bparticle{van~der} \bsnm{Vaart},~\bfnm{Aad~W}\binits{A.~W.}}
(\byear{2000}).
\btitle{Asymptotic statistics}
\bvolume{3}.
\bpublisher{Cambridge university press}.
\end{bbook}
\endbibitem

\bibitem[\protect\citeauthoryear{Wainwright}{2019}]{wainwright_2019}
\begin{bbook}[author]
\bauthor{\bsnm{Wainwright},~\bfnm{Martin~J.}\binits{M.~J.}}
(\byear{2019}).
\btitle{High-Dimensional Statistics: A Non-Asymptotic Viewpoint}.
\bseries{Cambridge Series in Statistical and Probabilistic Mathematics}.
\bpublisher{Cambridge University Press}.
\bdoi{10.1017/9781108627771}
\end{bbook}
\endbibitem

\bibitem[\protect\citeauthoryear{White}{1982}]{white1982maximum}
\begin{barticle}[author]
\bauthor{\bsnm{White},~\bfnm{Halbert}\binits{H.}}
(\byear{1982}).
\btitle{Maximum Likelihood Estimation of Misspecified Models}.
\bjournal{Econometrica}
\bvolume{50}
\bpages{1--26}.
\end{barticle}
\endbibitem

\bibitem[\protect\citeauthoryear{Zhang and Zhang}{2014}]{Zhang2013}
\begin{barticle}[author]
\bauthor{\bsnm{Zhang},~\bfnm{Cun-Hui}\binits{C.-H.}} \AND
  \bauthor{\bsnm{Zhang},~\bfnm{Stephanie~S}\binits{S.~S.}}
(\byear{2014}).
\btitle{Confidence intervals for low dimensional parameters in high dimensional
  linear models}.
\bjournal{Journal of the Royal Statistical Society, Series B}
\bvolume{76}
\bpages{217--242}.
\end{barticle}
\endbibitem

\bibitem[\protect\citeauthoryear{Zhu and Bradic}{2018a}]{zhu2018significance}
\begin{barticle}[author]
\bauthor{\bsnm{Zhu},~\bfnm{Yinchu}\binits{Y.}} \AND
  \bauthor{\bsnm{Bradic},~\bfnm{Jelena}\binits{J.}}
(\byear{2018}a).
\btitle{Significance testing in non-sparse high-dimensional linear models}.
\bjournal{Electronic Journal of Statistics}
\bvolume{12}
\bpages{3312--3364}.
\end{barticle}
\endbibitem

\bibitem[\protect\citeauthoryear{Zhu and Bradic}{2018b}]{zhu2018linear}
\begin{barticle}[author]
\bauthor{\bsnm{Zhu},~\bfnm{Yinchu}\binits{Y.}} \AND
  \bauthor{\bsnm{Bradic},~\bfnm{Jelena}\binits{J.}}
(\byear{2018}b).
\btitle{Linear hypothesis testing in dense high-dimensional linear models}.
\bjournal{Journal of the American Statistical Association}
\bvolume{113}
\bpages{1583--1600}.
\end{barticle}
\endbibitem

\end{thebibliography}


\end{document}